\numberwithin{equation}{section}
\definecolor{light-gray}{gray}{0.95}
\newtheorem{theorem}{Theorem}[section]
\newtheorem{lemma}[theorem]{Lemma}
\newtheorem{proposition}[theorem]{Proposition}
\newtheorem{corollary}[theorem]{Corollary}
\newtheorem{remark}[theorem]{Remark}
\newtheorem{definition}[theorem]{Definition}
\DeclareFontShape{OMX}{cmex}{m}{b}{<-> cmexb10}{}
\DeclareSymbolFont{boldlargesymbols}{OMX}{cmex}{m}{b}
\DeclareMathAccent{\bwidetilde}{\mathord}{boldlargesymbols}{"65}
\newcommand{\mc}[1]{{\mathcal #1}}
\newcommand{\mb}[1]{{\mathbf #1}}
\newcommand{\bb}[1]{{\mathbb #1}}
\newcommand{\bs}[1]{{\boldsymbol #1}}
\newcommand{\eps}{\varepsilon}
\newcommand{\<}{\langle}
\renewcommand{\>}{\rangle}
\newcommand{\p}{\partial}
\newcommand{\pfrac}[2]{\genfrac{}{}{}{1}{#1}{#2}}
\newcommand{\one}{\mathds{1}}
\newcommand{\radonN}{\frac{{\bf d}\bb P_N}{{\bf d}\bb P_N^H}}
\newcommand{\radonNinv}{\frac{{\bf d}\bb P^H_N}{{\bf d}\bb P_N}}
\newcommand{\Dpert}{\mathscr{D}^\alpha_{\text{\rm pert }}}
\newcommand{\Dpertinf}{\mathscr{D}^\infty_{\text{\rm pert }}}
\newcommand{\Ddiscreto}{\mathscr{D}_{\Omega_N}}
\newcommand{\PP}[1]{\big(s,\pfrac{#1}{N}\big)}
\newcommand{\Sko}{\mathscr{D}\big([0,T],C(\bb T)\big)}
\newcommand{\TN}{\mathbb{T}_N}
\DeclareMathOperator{\err}{err}
\let\oldtocsection=\tocsection
\let\oldtocsubsection=\tocsubsection
\let\oldtocsubsubsection=\tocsubsubsection
\renewcommand{\tocsection}[2]{\hspace{0em}\oldtocsection{#1}{#2}}
\renewcommand{\tocsubsection}[2]{\hspace{1em}\oldtocsubsection{#1}{#2}}
\renewcommand{\tocsubsubsection}[2]{\hspace{2em}\oldtocsubsubsection{#1}{#2}}
\DeclareRobustCommand{\SkipTocEntry}[5]{}
\keywords{Reaction-diffusion, large deviations, birth-and-death dynamics}
\begin{document}

\title[Large Deviations in the Supremum Norm for a Reaction-Diffusion System]{Large Deviations in the Supremum Norm\\ for a Reaction-Diffusion System}

\author[T. Franco]{T. Franco}
\address{UFBA\\
 Instituto de Matem\'atica, Campus de Ondina, Av. Adhemar de Barros, S/N. CEP 40170-110\\
Salvador, Brazil}
\curraddr{}
\email{tertu@ufba.br}
\thanks{}

\author[L. A. Gurgel]{L. A. Gurgel}
\address{UFMG\\ Instituto de Ciências Exatas, Av. Pesidente Antônio Carlos, 6627, CEP 31270-901 \\
Belo Horizonte, Brasil}
\curraddr{}
\email{luamaral@ufmg.br}
\thanks{}

\author[B. N. B. de Lima]{B. N. B.  de Lima}
\address{UFMG\\ Instituto de Ciências Exatas, Av. Pesidente Antônio Carlos, 6627, CEP 31270-901 \\
Belo Horizonte, Brasil}
\curraddr{}
\email{bnblima@mat.ufmg.br}
\thanks{}

\subjclass[2010]{60K35, 60F10, 60J80}

\begin{abstract} 
We present  large deviations estimates in the supremum norm for  a system of independent random walks superposed with a birth-and-death dynamics evolving on the discrete torus with $N$ sites. The scaling limit considered is the so-called \textit{high density limit} (see the survey  \cite{franco} on the subject), where space, time and initial quantity of particles are rescaled. The associated rate functional here obtained is  a semi-linearized version of the rate function of \cite{JonaLandimVares}, which dealt with large deviations of  exclusion  processes superposed with birth-and-death dynamics.  An important ingredient in the proof of large deviations  consists in providing a limit of a suitable class of perturbations of the original process, which is precisely one of the main contributions of this work: a strategy to extend the original high density approach (as in \cite{Arnold,blount2,blount,francogroisman,Kote2,KoteHigh1988}) to weakly asymmetric systems. Two cases are considered with respect to the initial quantity of particles, the power law and the (at least) exponential growth. In the first case, we present the lower bound only on a certain  set of smooth profiles, while in the second case under some extra technical assumptions we  provide a full large deviations principle.
\end{abstract}

\maketitle



\section{Introduction}\label{s1}
Since the early works of Dobrushin (as \cite{Dobrushin}) and the seminal paper of Guo, Papanicolau and Varadhan \cite{Guo}, an entire  theory on scaling limits of interacting particle systems has been established (see \cite{kl}), being of great importance in the context of statistical mechanics, to understand the behaviour of macroscopic systems by means of its microscopic interactions.

At same epoch the hydrodynamic limit (see \cite{kl} on the subject) started to be  developed, some works were published in a close topic sometimes called \textit{high density limit}, also in the context of scaling limit of interacting particle systems, as \cite{Arnold,blount2,blount,Kote2,KoteHigh1988} for instance. The main  difference between the hydrodynamic limit and the high density limit can be resumed  as follows: while in hydrodynamic limit space and time are rescaled in order to obtain a macroscopic limit, in the high density limit, space, time \textit{and the initial quantity of particles per site} are rescaled, see the survey \cite{franco}.   Each context of those frameworks requires a different topology. Whilst the hydrodynamic limit  usually deals with convergence on  space of measures, Schwartz distributions or Sobolev spaces,  the high density limit  deals with Sobolev spaces, but also allows to deal with the supremum norm (see \cite{blount}) which can be useful in simulations and numerical approximations of partial differential equations.

In opposition to the hydrodynamic limit, which has been continuously  studied since its beginning, the high density limit felt in disuse for many years  until the more recent paper \cite{francogroisman}. It was probably due to the following reason: the powerful \textit{Varadhan's Entropy Method}  and \textit{Yau's  Relative Entropy Method} allowed the study of  systems of non-linear diffusion, while the high density limit approach  was restricted to systems of linear diffusion. Basically, independent random walks  superposed with some additional dynamics, as the birth-and-death dynamics, for example. Actually, the high density approach is heavily based on the smoothing properties of the discrete heat kernel, which explains  the necessity of having the diffusion part of the dynamics given by  independent random walks. 

On the other hand,  the powerful techniques suitable for the hydrodynamic limit are not expected to fit in the high density limit scenario. The Varadhan's Entropy Method  and the Yau's  Relative Entropy Method are  related  in certain sense to   the occurrence  of local equilibrium, which means that  the occupation variables locally approximates the  (usually product) invariant measure (invariant for some part of the dynamics, let us say the diffusion part). However, in the present setting, the  occupation variables  converge to a  deterministic profile in the supremum norm, indicating there is no local equilibrium. And in absence of local equilibrium it is hard to expect these methods to work.


The main result we present in this paper is a large deviations principle for the law of large numbers of \cite{blount}, which consists in the high density limit in the supremum norm for a system of independent random walks on the discrete torus superposed with a birth and death dynamics. Actually, following some observations of \cite{francogroisman}, weakening some assumptions on the birth and death rates, we consider a slightly more general system than that one of \cite{blount}. 

As it is usual in large deviations, an important ingredient  of the proof is a law of large numbers for a class of perturbations of the original model, which is an interesting result by itself. Since the high density limit was originally designated for \textit{systems of symmetric diffusion} (independent random walks superposed with some extra dynamics), we can  say that the more challenging step in our proof is to reach the law of large numbers for the perturbed process, which is a \textit{weakly asymmetric system}. To extend the high density approach to systems under a more general non linear diffusion remains as  a challenging  open problem.

The rate function we obtain in the large deviations is a spatially linearized version of the rate function of \cite{JonaLandimVares}, which dealt with large deviations of a superposition of Glauber and Kawasaki dynamics.  This fact is quite reasonable since, in some sense, a system of independent random walks is a linearization of the Glauber dynamics and the Kawasaki dynamics is a birth-and-death dynamics. However, this resemblance is limited to this observation: since \cite{JonaLandimVares} works on the hydrodynamic limit while we deal with the high density limit, the technical challenges we face here are  distinct of those in \cite{JonaLandimVares}.


Apart of the result itself, which is relevant due the broad occurrence of  reaction-diffusion partial differential equations and the importance of the supremum norm for simulations, the main novel of the present work consists in providing a strategy to extend the original high density approach 
(as in \cite{Arnold,blount2,blount,francogroisman,Kote2,KoteHigh1988}), originally developed  to systems of symmetric diffusion, to spatially weakly asymmetric systems. 
The first ingredient is to show that the solution of a spatially discretized version of the limiting PDE is actually close to that  PDE. In the sequence, we  study the martingales associated  to the projection at each site. 
From these martingales and the presence   of  the discrete Laplacian, we obtain integral equations via a proper Duhamel's Principle, which involves the discrete heat semigroup rather than  the Laplacian operator. Then, by providing  estimates  on the random term of these equations and recalling smoothing properties of the heat semi-group allows to get the desired convergence in the supremum norm. In the upper bound, the proof of exponential tightness demanded  a special approach, while in the lower bound, the speed of the scaling parameter $\ell(N)$ plays a particular role, which explains why we divided the lower bound in two cases, the power law and the exponential one.

The paper's outline goes as follows. In Section \ref{s2}, we  define the model and state results. In Section \ref{s3}, we prove the high density limit for the weakly asymmetric perturbation of the original process. In Section \ref{s4}, we provide the proof of large deviations estimates.

\section{Statements}\label{s2}
\noindent \textbf{Notations:} by $g=O(f)$ we mean that the function $g$ is bounded in modulus by a constant times the function $f$, where the constant may change from line to line. The spatial first and second derivates on space will be denoted by $\nabla$ and $\Delta$. However,  we sometimes also write  $\p_x$ and $\p_{xx}^2$ instead of $\nabla$ and $\Delta$  to better differentiate it of discrete derivatives to be later defined. By $\bb R_+$ we will mean the set of non-negative real numbers. By $C^{i,j}$ we denote the set of functions which are $C^i$ in the time variable and $C^j$ in the spatial variable.
\subsection{The model}

 Denote by $\mathbb{T}_N= \mathbb{Z}/(N\mathbb{Z})$ the discrete torus  with $N$ sites and by  $\mathbb{T}$ denote the continuous torus  $\bb R/ \bb Z =[0,\,1)$, where the point $0$ is identified with  the point $1$.
  Let  $b,d:\bb R_+\to \bb R_+$ be two Lipschitz functions such that $d(0)=0$ and let   $\ell=\ell(N)$  be a positive integer parameter.
We denote by  
 $\big(\eta(t)\big)_{t\geq 0}$ 
\begin{equation*}
 \big(\eta(t)\big)_{t\geq 0}\;=\;\big(\eta_1(t),\,\dots\,,\,\eta_N(t) \big)_{t\geq 0}\,,
\end{equation*} 
the continuous-time Markov chain with state space $\Omega_N= \mathbb{N}^{\mathbb{T}_N}$, 
where $\eta_k(t)$ means the quantity of particles at the site $k$ at the time $t$. 
 These process can be defined through its infinitesimal generator $\mathsf{L}_N$, which acts on functions $f:\Omega_N\to\bb R$ as
\begin{align*}
\mathsf{L}_N f(\eta)\;=\;& \sum_{k\in \bb T_N}N^2 \eta_k \Big[f(\eta^{k,k+ 1})-f(\eta)\Big]+\sum_{k\in \bb T_N}N^2 \eta_k \Big[f(\eta^{k,k- 1})-f(\eta)\Big]\\
+\;&\sum_{k\in \bb T_N}\ell b(\ell^{-1}\eta_k)\Big[f(\eta^{k,+})-f(\eta)\Big]
+\sum_{k\in \bb T_N}\ell d(\ell^{-1}\eta_k)\Big[f(\eta^{k,-})-f(\eta)\Big]\,,
\end{align*}
where
\[
\eta^{k,k\pm 1}_j\;=\;
\left\{
\begin{array}{rl}
\eta_j\,,& \text{ if }j\neq k,k\pm 1\\
\eta_k-1\,,& \text{ if }j=k \text{ and } \eta_k\geq 1\\
\eta_{k\pm 1}+1\,,& \text{ if } j=k\pm1 \text{ and } \eta_k\geq 1\\
\eta_k\,,& \text{ if }j=k \text{ and } \eta_k=0\\
\eta_{k\pm 1}\,,& \text{ if }j=k\pm 1 \text{ and } \eta_k=0\\
\end{array}
\right.
\]
and
\[
\eta^{k,+}_j\;=\;
\left\{
\begin{array}{rl}
\eta_j\,,& \text{ if }j\neq k\\
\eta_k+1\,,& \text{ if }j=k \\
\end{array}
\right.\,,\;\;
\eta^{k,-}_j\;=\;
\left\{
\begin{array}{rl}
\eta_j\,,& \text{ if }j\neq k\\
\eta_k-1\,,& \text{ if }j=k\text{ and } \eta_k\geq 1 \\
\eta_k\,,& \text{ if }j=k\text{ and } \eta_k=0 
\end{array}
\right..
\]

 A time-horizon $T>0$ will be fixed throughout the paper. Let $\mathscr{D}\big([0,T], \Omega_N\big)$ be the path space of
c\`adl\`ag time trajectories taking values on $\Omega_N$. For short, we will denote this space just by $\Ddiscreto$. Given a
measure $\mu_N$ on $\Omega_N$, denote by $\bb P_N$ the
probability measure on $\Ddiscreto$ induced by the
initial state $\mu_N$ and the Markov process $\{\eta(t) : t\ge 0\}$.
Expectation with respect to $\bb P_N$ will be denoted by $\bb
E_N$. 

The object we are  interested in this paper is the spatial density $X^N:\bb T\to \bb R_+$  of particles,  defined as follows. Keep in mind  that $\bb T_N$ is naturally embedded on $\bb T$, and denote $x_k=k/N$ for  $k\in \TN$. Let 
\begin{equation}\label{eq21}
 X^N(t,x_k) \;=\; \ell^{-1} \eta_k(t)\,
\end{equation}
and, for $x_k<x<x_{k+1}$,  define $X^N(t,x)$ by means of a linear interpolation, i.e.,
\begin{equation}\label{eq22}
 X^N(t,x) \;=\; (Nx-k) X^N(t,x_{k+1}) + (k+1-Nx) X^N(t,x_k)\,.
\end{equation}
In  \cite{blount,francogroisman} it was  proved the following law of large numbers for the density of particles.
\begin{theorem}[\cite{blount,francogroisman}]\label{t2.1} Let $\phi(t,x)$ be the solution of the following initial value problem:
\begin{equation}
\label{PDE}
\left\{\begin{array}{ll}
\p_t \phi \;=\; \Delta \phi + f(\phi)  \qquad & (t,x) \in [0,T]\times \bb T\,,\\
 \phi(0,x) \;=\; \gamma (x) \ge 0 \qquad & x \in \bb T\,. \\
\end{array}\right.
\end{equation}
Let $b,d:\bb R_+\to \bb R_+ $ be  Lipschitz $C^1$-functions such that $d(0)=0$ and $f=b-d$,  and let $\gamma:\bb T\to \bb R_+$ be a $C^4$ profile.
Assume that:
\begin{enumerate}[\bf (1)]
 \item $\Vert X^N(\cdot,0)-\gamma(\cdot)\Vert_\infty \to 0$ almost
surely as $N\to \infty\,$,\vspace{0.2cm}
 \item for any  $c>0$, $\ell=\ell(N)$ satisfies $\sum_{N\geq 0} N^3
e^{-c\, \ell}<\infty\,$.
\end{enumerate}
Then, for any $T>0\,$,
\begin{equation*}
\lim_{N\to\infty} \sup_{t\in[0,T]} \Vert  X^N(t,\cdot)-\phi(t,\cdot)\Vert_\infty\;=\; 0\quad \textrm{ almost surely.}
\end{equation*}
\end{theorem}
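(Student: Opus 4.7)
The plan is to write $X^N$ via a martingale-plus-Duhamel formula and reduce the convergence to three separate estimates: convergence of the initial data under the discrete heat semigroup, Lipschitz control of the reaction term, and a uniform bound on a stochastic noise term. For each $k \in \TN$, Dynkin's formula applied to $\eta \mapsto \eta_k$ yields
\[
\eta_k(t) \;=\; \eta_k(0) + \int_0^t \bigl[N^2 \Delta_N \eta_k(s) + \ell f(\ell^{-1}\eta_k(s))\bigr]\, ds + M^N_k(t),
\]
where $\Delta_N$ is the discrete Laplacian and $M^N_k$ is a martingale whose predictable quadratic variation equals $\int_0^t [N^2(\eta_{k+1}+\eta_{k-1}+2\eta_k) + \ell b(\ell^{-1}\eta_k) + \ell d(\ell^{-1}\eta_k)]\, ds$. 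Dividing by $\ell$ and applying a discrete Duhamel principle with the semigroup $P^N_t = e^{t N^2 \Delta_N}$, whose kernel we denote by $p^N_t(\cdot,\cdot)$, gives
\[
X^N_k(t) \;=\; (P^N_t X^N(0))_k + \int_0^t \bigl(P^N_{t-s} f(X^N(s,\cdot))\bigr)_k\, ds + \mc N^N_k(t),
\]
where $\mc N^N_k(t) = \ell^{-1} \int_0^t \sum_j p^N_{t-s}(k,j)\, dM^N_j(s)$.

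The next step is an a priori bound $\max_k \eta_k(t) \leq C\ell$ uniformly on $[0,T]$ with probability at least $1 - e^{-c\ell}$, exploiting the at-most-linear growth of $b$ via comparison with an auxiliary pure-birth process. The heart of the proof then lies in controlling $\mc N^N$ in supremum norm. Its predictable quadratic variation is bounded by $\ell^{-2}\int_0^t \sum_j [p^N_{t-s}(k,j)]^2 \cdot O(N^2 \ell)\, ds$; the local limit type bound $\sum_j [p^N_t(k,j)]^2 = O\bigl((1+N^2 t)^{-1/2}\bigr)$ for the discrete heat kernel then gives $\langle \mc N^N_k \rangle_T = O(1/\ell)$. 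A Freedman-type exponential martingale inequality, whose jump bound is $O(1/\ell)$, combined with a fine time discretization and a union bound over $k \in \TN$ reduces the sup-norm estimate to a quantity summable in $N$ precisely under assumption \textbf{(2)}, so Borel--Cantelli yields $\sup_k\sup_{t\leq T}|\mc N^N_k(t)| \to 0$ almost surely.

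With the noise controlled, I would close the argument by comparing the discrete Duhamel formula to the mild form of the PDE, $\phi(t) = P_t \gamma + \int_0^t P_{t-s} f(\phi(s))\, ds$, where $P_t$ is the continuous heat semigroup. The difference splits into three pieces: the initial-data term $P^N_t X^N(0) - P_t \gamma$, which vanishes uniformly by convergence of the discrete to the continuous heat semigroup on the $C^4$ profile $\gamma$ together with hypothesis \textbf{(1)}; the nonlinear term, to which the contractivity $\|P^N_{t-s} g\|_\infty \leq \|g\|_\infty$ and the Lipschitz character of $f$ allow one to apply a Gronwall inequality in supremum norm; and the noise, already handled. Combining these yields $\sup_{t \leq T} \|X^N(t,\cdot) - \phi(t,\cdot)\|_\infty \to 0$ almost surely.

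The main obstacle will be the second step: upgrading the $L^2$-level bound $\langle \mc N^N_k\rangle_T = O(1/\ell)$ to an almost-sure control of $\mc N^N_k(t)$ simultaneously over all $k \in \TN$ (of cardinality $N$) and all $t \in [0,T]$. This is exactly where the balance in assumption \textbf{(2)}, $\sum_N N^3 e^{-c\ell} < \infty$, enters: one needs exponential concentration at scale $\ell$ to beat a union-bound cost polynomial in $N$, and it is the smoothing of the discrete heat kernel, absent in a naive site-by-site martingale estimate, that produces the correct scaling in the sup norm.
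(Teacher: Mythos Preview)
Your overall architecture---Dynkin martingale, discrete Duhamel with the heat semigroup $P^N_t=e^{t\Delta_N}$, smoothing estimates on $\sum_j [p^N_t(k,j)]^2$, exponential martingale concentration, and a Gronwall closure---is exactly the route the paper takes (via its Theorem~\ref{DensityLimit} with $H\equiv 0$, which specializes to Theorem~\ref{t2.1}). The identification of assumption \textbf{(2)} as the balance between exponential concentration at scale $\ell$ and a polynomial-in-$N$ union bound is also correct.

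There is, however, a genuine gap in your second step. You claim an a~priori bound $\max_k \eta_k(t)\le C\ell$ with probability $1-e^{-c\ell}$ ``via comparison with an auxiliary pure-birth process''. A pure-birth comparison controls the \emph{total} particle number $\sum_k\eta_k(t)$, not the per-site maximum; nothing prevents particles from piling up at a single site, where the inflow rate $N^2(\eta_{k-1}+\eta_{k+1})$ can be arbitrarily large. Yet your quadratic-variation bound $\ell^{-2}\int_0^t\sum_j[p^N_{t-s}(k,j)]^2\,O(N^2\ell)\,ds$ uses precisely $\eta_j=O(\ell)$ pointwise. Without it, the compensator of $\mc N^N_k$ is not controlled and the exponential-martingale step fails. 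The paper flags this explicitly (see the discussion around \eqref{Y48}): the noise $Y^N(t)=\int_0^tT_N(t-s)\,dZ^N(s)$ is not known to vanish because $Z^N$ is not a~priori bounded.

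The paper's fix (following Blount) is a stopping-time truncation rather than an a~priori bound: set $\tau=\inf\{t:\|X^N(t)-\psi^N(t)\|_\infty>\varepsilon_0\}$, and define $\overline{X}^N$ to coincide with $X^N$ up to $\tau$ and to evolve deterministically by the semi-discrete ODE afterward. Then $\|\overline{X}^N(t)\|_\infty\le \mathbf C$ deterministically, the stopped noise $\overline{Y}^N$ admits the exponential bound of Lemma~\ref{lemma43}, and showing $\sup_t\|\overline{X}^N(t)-\psi^N(t)\|_\infty\to 0$ a.s.\ automatically forces $\tau>T$ eventually, hence $X^N=\overline{X}^N$. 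This bootstrapping replaces your unproven per-site a~priori bound.

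A secondary difference: rather than comparing $X^N$ directly to the mild form $\phi(t)=P_t\gamma+\int_0^tP_{t-s}f(\phi(s))\,ds$, the paper first proves convergence of a semi-discrete scheme $\psi^N$ (Proposition~\ref{aproximacaosemidiscreta}) and then compares $\overline{X}^N$ to $\psi^N$. Both routes close by Gronwall, but the semi-discrete intermediate keeps all objects on $\bb T_N$ and avoids mixing discrete and continuous heat semigroups in the same inequality. Your direct route is workable once the boundedness issue is repaired, but you would still need uniform-in-$N$ control of $\|P^N_t\gamma-P_t\gamma\|_\infty$ and of the discretization error in the reaction integral.
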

Assumption \textbf{(1)} above and \eqref{eq21} allow us to interpret the parameter $\ell$ as the order of  particles per site, from where comes the terminology \textit{high density limit} (see \cite{KoteHigh1988}). In contrast with the  hydrodynamic limit (see \cite{kl}), where only time and space are rescaled, here  \textit{time, space and the initial quantity of particles per site are rescaled}, which permits convergence in the supremum norm.

Some comments: although  Theorem~\ref{t2.1} cannot be found in this exact way in any of the papers \cite{blount,francogroisman}, it can be deduced from both references together. Since this statement is also a   particular case of our Theorem~\ref{DensityLimit} to be enunciated ahead, we do not go further into details. Moreover, the Lipschitz assumption on the function $b$  assures growth at most linear, thus preventing the occurrence of \emph{explosions in finite time} for both microscopic and macroscopic settings. See \cite{francogroisman} on the subject of explosions for this kind of reaction-diffusion system.

\subsection{High density limit for weakly asymmetric perturbations}\label{sub2.3}
In the proof of large deviations estimates, a law of large numbers for a class of perturbations of the original process is naturally required, which is an interesting result by itself. For the reaction-diffusion model we study here,    the perturbed process will be the following one, which is inspired by the perturbed process of \cite{JonaLandimVares}. Given $H\in C^{1,2}$, we  define  the continuous-time Markov chain 
 $\big(\eta(t)\big)_{t\geq 0}$ with state space $\Omega_N= \mathbb{N}^{\mathbb{T}_N}$ by 
\begin{equation*}
 \big(\eta(t)\big)_{t\geq 0}\;=\;\big(\eta_1(t),\,\dots\,,\,\eta_N(t) \big)_{t\geq 0}\,,
\end{equation*} 
 where $\eta_k(t)$ means the quantity of particles at site $k$ at time $t$. This process can be defined through its infinitesimal generator $\mathsf{L}_N$, which acts on functions $f:\Omega_N\to\bb R$ as
\begin{align*}
\mathsf{L}_N f(\eta)\;=\;& \sum_{k\in \bb T_N}N^2 \eta_k\exp\big\{H_{k+1}-H_k\big\} \Big[f(\eta^{k,k+ 1})-f(\eta)\Big]\\
+\;&\sum_{k\in \bb T_N}N^2 \eta_k \exp\big\{H_{k-1}-H_k\big\}\Big[f(\eta^{k,k- 1})-f(\eta)\Big]\\
+\;&\sum_{k\in \bb T_N}\ell b(\ell^{-1}\eta_k)\exp\big\{H_k\big\}\Big[f(\eta^{k,+})-f(\eta)\Big]\\
+\;&\sum_{k\in \bb T_N}\ell d(\ell^{-1}\eta_k)\exp\big\{-H_k\big\}\Big[f(\eta^{k,-})-f(\eta)\Big]\,,
\end{align*}
where
\[
\eta^{k,k\pm 1}_j\;=\;
\left\{
\begin{array}{rl}
\eta_j\,,& \text{ if }j\neq k,k\pm 1\\
\eta_k-1\,,& \text{ if }j=k \text{ and } \eta_k\geq 1\\
\eta_{k\pm 1}+1\,,& \text{ if } j=k\pm1 \text{ and } \eta_k\geq 1\\
\eta_k\,,& \text{ if }j=k \text{ and } \eta_k=0\\
\eta_{k\pm 1}\,,& \text{ if }j=k\pm 1 \text{ and } \eta_k=0\\
\end{array}
\right.
\]
and
\[
\eta^{k,+}_j\;=\;
\left\{
\begin{array}{rl}
\eta_j\,,& \text{ if }j\neq k\\
\eta_k+1\,,& \text{ if }j=k \\
\end{array}
\right.\,,\;\;
\eta^{k,-}_j\;=\;
\left\{
\begin{array}{rl}
\eta_j\,,& \text{ if }j\neq k\\
\eta_k-1\,,& \text{ if }j=k\text{ and } \eta_k\geq 1 \\
\eta_k\,,& \text{ if }j=k\text{ and } \eta_k=0 
\end{array}
\right..
\]
Note that this 
time inhomogeneous Markov chain actually depends on $H$. However, to not overload notation, this dependence will be dropped.  Given a
measure $\mu_N$ on $\Omega_N$, denote by $\bb P_N^H$ the
probability measure on $\Ddiscreto$ induced by the
initial state $\mu_N$ and the Markov process $\{\eta(t) : t\ge 0\}$ above.
Expectation with respect to $\bb P_N^H$ will be denoted by $\bb
E_N^H\,$. 

Let $\psi: [0,T]\times \bb T\to \bb R$ be the
solution of the following initial value problem:  
\begin{equation}\label{EDP}
\begin{cases} 
\partial_t \psi \;=\; \p_{xx}^2\psi -2\p_x\big(\psi \,\p_x H\big)  + e^{H} b(\psi)-e^{-H} d(\psi)\,, & (t,x)\in [0,T]\times \bb T\,, \vspace{0.1cm} \\ \psi(0,x)\;=\;\gamma(x), & x \in \mathbb{T}\,.  
\end{cases}
\end{equation}
Assuming that $H\in C^{1,2}$, $b,d\in C^1$ and $\gamma$ is Holder continuous in $\mathbb{T}$, there exists a unique classical solution  of the  initial value problem \eqref{EDP}, which we denote by $\psi$, see \cite[Chapter II, Section 2.3]{Pao}. We point out that the partial differential equation  above can be understood as a linearized version of the partial differential equation in \cite[(2.11)]{JonaLandimVares}.

Next, we state the high density limit for the perturbed process. As before,  $X^N(t)=X^N(t,x)$ is equal to $\eta_k(t)/\ell$ for  $x=k/N$ and linearly interpolated otherwise. Of course, this process depends on $H$, whose dependence is omitted.
\begin{theorem}[High density limit for perturbed processes]\label{DensityLimit} Let $b,d:\bb R_+\to \bb R_+ $ be  Lipschitz $C^1$ functions with $d(0)=0$,  let $H\in C^{1,2}$ and let $\gamma:\bb T\to \bb R_+$ be a $C^4$ profile.  Assume the following conditions:
\begin{itemize}
\item [\textbf{(A1)}] The sequence of initial measures $\mu_N$ is such that 
\begin{align}\label{eq2.8}
\Vert X^N(0,\cdot)-\gamma(\cdot)\Vert_{\infty}\rightarrow 0\,, \qquad \text{ almost surely  as }N\rightarrow \infty\,.
\end{align}
 \item [\textbf{(A2)}] The parameter $\ell=\ell(N)$ satisfies 
\begin{equation}\label{eq2.9}
\frac{N^{\Vert \partial_x H\Vert^2_{\infty}/{\pi}^2}\log N}{\ell} \rightarrow 0\,, \qquad \text{ as }N\rightarrow \infty\,.
\end{equation}
 \end{itemize}
Then, 
\begin{align*}
\lim_{N\to\infty}\sup_{t\in [0,T]}\Vert X^N(t,\cdot)-\psi(t,\cdot)\Vert_{\infty}\;=\;0\,,\quad \text{ almost surely  as }N\rightarrow \infty\,,
\end{align*}
where $\psi$ is the solution of \eqref{EDP}.
\end{theorem}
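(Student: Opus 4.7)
The plan is to adapt the discrete semigroup approach of \cite{blount,francogroisman} to the weakly asymmetric setting. Applying Dynkin's formula at each site $k\in\TN$ yields
\begin{equation*}
\eta_k(t) \;=\; \eta_k(0) \;+\; \int_0^t \mathsf{L}_N\eta_k(s)\,ds \;+\; M^N_k(t),
\end{equation*}
with $M^N_k$ a square-integrable martingale. Expanding the asymmetric rates $e^{H_{k\pm 1}-H_k}=1+(H_{k\pm 1}-H_k)+O(N^{-2})$ and reorganising, one writes $\mathsf{L}_N\eta_k=\Delta_N\eta_k+\mc R^N_k(\eta)$, where $\Delta_N$ is the pure discrete Laplacian and $\mc R^N_k$ collects a discrete version of $-2\p_x(X^N\p_x H)$ together with the birth-and-death contribution $\ell[e^{H_k}b(X^N_k)-e^{-H_k}d(X^N_k)]$. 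Dividing by $\ell$ and applying the discrete Duhamel formula driven by $P^N_t=e^{t\Delta_N}$ gives the identity
\begin{equation*}
X^N(t,x_k)\;=\;(P^N_t X^N(0,\cdot))(x_k) \;+\; \int_0^t P^N_{t-s}\mc R^N(X^N(s,\cdot))(x_k)\,ds \;+\; Z^N_k(t),
\end{equation*}
where $Z^N_k(t)=\ell^{-1}\sum_j\int_0^t p^N_{t-s}(k,j)\,dM^N_j(s)$ is a stochastic convolution.

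On the PDE side, the regularity of $\gamma$ and $H$ yields a smooth classical solution $\psi$. Sampling the continuous Duhamel identity with $P_t=e^{t\Delta}$ at the grid points and comparing with its discrete counterpart via Taylor bounds of the form $\|(\Delta-\Delta_N)g\|_\infty = O(N^{-2}\|g^{(4)}\|_\infty)$ (and analogous estimates for the discretization of the drift $-2\p_x(\psi\,\p_x H)$) produces
\begin{equation*}
\psi(t,x_k)\;=\;(P^N_t\psi(0,\cdot))(x_k) \;+\; \int_0^t P^N_{t-s}\mc R^N(\psi(s,\cdot))(x_k)\,ds \;+\; \mc E^N_k(t),
\end{equation*}
with a deterministic consistency error $\sup_{t\le T}\|\mc E^N(t)\|_\infty = O(N^{-1})$. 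Setting $U^N(t,x_k)=X^N(t,x_k)-\psi(t,x_k)$ and using the $\ell^\infty$-contractivity of $P^N_t$, the Lipschitz character of $b,d$ and the $C^{1,2}$ regularity of $H$, one arrives at the Gronwall-type bound
\begin{equation*}
\|U^N(t)\|_\infty \;\le\; \|U^N(0)\|_\infty + C_H\int_0^t \|U^N(s)\|_\infty\,ds + \sup_{s\le t}\|\mc E^N(s)\|_\infty + \sup_{s\le t}\|Z^N(s)\|_\infty.
\end{equation*}
Assumption \textbf{(A1)} controls the first term and the consistency estimate the third, so the theorem reduces to proving $\sup_{t\le T}\|Z^N(t)\|_\infty\to 0$ almost surely; extending the sup from the grid to $\bb T$ via the linear interpolation \eqref{eq22} costs only an additional $O(N^{-1})$ error.

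The hard part is the sup-norm control of $Z^N$ under the mild condition \textbf{(A2)}. A naive $L^2$ estimate based on $\langle M^N_k\rangle_t = O(N^2 t)$ and the smoothing $\sum_j p^N_t(k,j)^2 \lesssim 1\wedge(N^2 t)^{-1/2}$ only yields $\bb E[Z^N_k(t)^2] = O(N/\ell)$, which would force the much too restrictive $\ell\gg N$. The plan is to sharpen this by exponential Chebyshev applied to the exponential martingale $\exp\{\lambda Z^N_k(t)-\tfrac12\lambda^2\langle Z^N_k\rangle(t)\}$, optimising $\lambda$ and controlling $\langle Z^N_k\rangle(t)$ through a spectral estimate for $P^N_t$ on $\TN$; the spectral gap of $\Delta_N$ combined with the exponential tilt induced by the weak asymmetry is what produces the exponent $\|\p_xH\|_\infty^2/\pi^2$ appearing in \textbf{(A2)}, while a union bound over $k\in\TN$ and over a time grid of mesh $O(N^{-c})$, bridged by a Burkholder--Davis--Gundy maximal inequality in the style of \cite{blount}, contributes the $\log N$ factor. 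Borel--Cantelli then upgrades convergence in probability to almost sure convergence, and the Gronwall inequality above closes the proof.
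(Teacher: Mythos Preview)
Your overall architecture (Dynkin martingale, discrete Duhamel, exponential Chebyshev, Borel--Cantelli) matches the paper's, but there are two genuine gaps that break the argument as written.

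First, the Gronwall inequality you state is false. The drift $\mc R^N$ contains the discrete transport term coming from $-2\p_x(X^N\p_xH)$, which after subtracting the $\psi$-equation leaves a term of the form $P^N_{t-s}\big[\widetilde\nabla_N U^N(s)\,\p_xH\big]$ with $\widetilde\nabla_N f(k)=\tfrac{N}{2}[f(x_{k+1})-f(x_{k-1})]$. In the sup norm this is of order $N\Vert U^N\Vert_\infty$, so the $\ell^\infty$-contractivity of $P^N_t$ alone cannot produce a bound $C_H\int_0^t\Vert U^N(s)\Vert_\infty\,ds$. The paper fixes this by writing $T_N(t-s)\widetilde\nabla_N = \widetilde\nabla_N T_N(t-s)$ and expanding in the eigenbasis $\{\varphi_{m,N},\phi_{m,N}\}$ of $\Delta_N$; the resulting sum $\sum_m m\,e^{-\beta_{m,N}(t-s)}$ integrates to $O(\log N)$ and the Gronwall step produces an extra factor $N^{c\Vert\p_xH\Vert_\infty/\pi}$ in front of $\Vert U^N(0)\Vert_\infty+\sup_s\Vert Z^N(s)\Vert_\infty$. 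It is \emph{this} factor, not the martingale analysis, that generates the exponent $\Vert\p_xH\Vert_\infty^2/\pi^2$ in \textbf{(A2)}: one needs $N^{c\Vert\p_xH\Vert_\infty/\pi}\sup_s\Vert Z^N(s)\Vert_\infty\to 0$, not merely $\sup_s\Vert Z^N(s)\Vert_\infty\to 0$. Your attribution of the exponent to ``the exponential tilt induced by the weak asymmetry'' in the martingale step is therefore misplaced.

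Second, the exponential-martingale bound on $Z^N$ requires the predictable quadratic variation to be bounded by a \emph{deterministic} function, but $\langle Z^N_k\rangle_t$ depends linearly on $X^N(s)$, which is not a priori bounded (the birth dynamics allows arbitrarily many particles). The paper handles this by introducing the stopping time $\tau=\inf\{t:\Vert X^N(t)-\psi^N(t)\Vert_\infty>\eps_0\}$ and replacing $X^N$ by a process $\overline X^N$ that evolves deterministically after $\tau$; one then has $\sup_t\Vert\overline X^N(t)\Vert_\infty\le \mathbf C$ uniformly in $N$, the exponential bound of Lemma~4.4 in \cite{blount} applies, and one checks at the end that $\sup_t\Vert\overline X^N-\psi^N\Vert_\infty\to 0$ a.s.\ forces $\tau>T$ eventually, hence $\overline X^N=X^N$. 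Without this localisation your exponential Chebyshev step does not go through.
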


\begin{remark}\label{remark1a}\rm
There are no further hypotheses  on the sequence of initial measures $\mu_N$ aside of \eqref{eq2.8}. As an example of a sequence of initial measures, one may consider $\mu_N$ as a product measure of Poisson distributions whose parameter at the site $x\in \bb T$ is given by $\ell \gamma(x/N)$. However, since we are interested in dynamical large deviations, throughout the paper we  assume that $\mu_N$ is a deterministic sequence, that is, each $\mu_N$ is a delta of Dirac on some configuration. This avoids the analysis of static large deviations.  
\end{remark}

\begin{remark}\label{remark1}\rm
Let us discuss the meaning of  \textbf{(A2)}. Taking $\ell(N)= N^\alpha$ with $\alpha>0$, condition \eqref{eq2.9} holds once $\Vert \partial_x H\Vert_\infty< \pi \sqrt{\alpha}$. This may look weird at a first glance, but it is not completely unexpected. The role of $H$ is to introduce an asymmetry in the system. Since the density limit approach is heavily founded on the smoothing properties of the discrete heat kernel (which is associated to the \textit{symmetric} random walk), it is somewhat reasonable to have a competition between the growth speed of $\ell(N)$ and the strength of the function $H$. 
On the other hand, under the hypothesis $\ell=\ell(N)\geq e^{c N}$ for some constant $c$, the high density limit  holds for any perturbation $H\in C^{1,2}$.
\end{remark}

\subsection{Large deviations}

 We state  in the sequel a  large deviations principle associated to  the law of large numbers of Theorem~\ref{t2.1}. Denote by $C(\bb T)$ the  Banach space of continuous functions $H:\bb T\to \bb R$ under the supremum norm $\Vert \cdot \Vert_\infty$.
 Denote by $C^{1,2}=C^{1,2}\big([0,T]\times \bb T\big)$ the set of functions $H:[0,T]\times \bb T\to \bb R$ such that $H$ is $C^1$ in time and $C^2$ in space. Let $\mathscr{D}_{C(\bb T)}=\Sko$ be the Skorohod space of c\`adl\`ag trajectories taking values on $C(\bb T)$.  
 Define  the functional $J_H:\mathscr{D}_{C(\bb T)}\to \bb R$  by
\begin{equation}
\begin{split}
J_H(u) \; =\; &  \int_{\mathbb{T}} \Big[ H(t,x)u(t,x) -H(0,x)u(0,x) \Big]\,dx\\
&+\int_0^t \int_{\mathbb{T}}\Big[- u(s,x)\Big(\partial_sH(s,x)+\Delta H(s,x) + \big(\nabla H(s,x)\big)^2\Big)  \\
&\hspace{1.6cm} +b\big(u(s,x)\big)\big(1-e^{H(s,x)}\big)+d\big(u(s,x)\big)\big(1-e^{-H(s,x)}\big)\Big] \,dx\, ds\,.\label{J}
\end{split}
\end{equation}
Recalling that $\gamma: \bb T\to \bb R_+$ is the   non-negative $C^4$ function  which appears in Theorem~\ref{t2.1}  and Theorem~\ref{DensityLimit}, let ${\bs I}: \mathscr{D}_{C(\bb T)}\to [0,+\infty]$ be given by 
\begin{align*}
{\bs I} (u) \; =\; 
\begin{cases}
\displaystyle\sup_{H\in C^{1,2}} J_H(u)\,, & \text{ if } u(0,\cdot)\,=\,\gamma(\cdot),\\
+\infty\,, & \text{ otherwise.}
\end{cases}
\end{align*}

\begin{definition}\label{Dpertalpha}
Denote by $\Dpert\subseteq \mathscr{D}_{C(\bb T)}$ the set of all profiles $\psi: [0,T]\times \bb T\to \bb R$ satisfying:\smallskip

$\bullet$  $\psi(0,\cdot)=\gamma(\cdot)$,\smallskip

$\bullet$  $\psi\in C^{2,3}$,\smallskip

$\bullet$  $ \psi\geq \eps$ for some $\eps>0$,\smallskip

$\bullet$  there exists a function $H\in C^{1,2}$, with $\Vert \partial_x H\Vert_\infty \leq \pi\sqrt{\alpha}$, such that $\psi$ is the solution of \eqref{EDP}.
\end{definition}

We are in position now to state the main result of this paper.  Let $ P_N$ be the probability measure on the set $\mathscr{D}_{C(\bb T)}$ induced by the stochastic process $X^N(t)$ defined by \eqref{eq21} and \eqref{eq22}.
\begin{theorem} Under   the hypothesis of Theorem~\ref{t2.1},  additionally assume  that  $X^N(0,\cdot)$ is a  deterministic profile for each $N\in \bb N$. 
 Let $\ell=\ell(N)=N^\alpha$ for some fixed $\alpha>0$. Then:
\begin{enumerate}[1)]
\item   For every closed set $\mathcal C\subseteq \mathscr{D}_{C(\bb T)}$, 
\begin{align*}
\limsup_{N\rightarrow \infty} \frac{1}{\ell N} \log {P}_N(\mc C)\;\leq\; -\inf_{u\in \mc C}{\bs I}(u)\,.
\end{align*}
\item    For every open set $\mathcal O\subseteq\mathscr{D}_{C(\bb T)}$, 
\begin{align}\label{lower bound}
\liminf_{N\rightarrow \infty} \frac{1}{\ell N} \log {P}_N(\mc O)\;\geq\; -\inf_{u\in \mc O\cap\Dpert}{\bs I}(u)\,.
\end{align}
\end{enumerate}
\end{theorem}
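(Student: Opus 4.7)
The overall strategy follows the Donsker--Varadhan / Kipnis--Landim scheme adapted to the high density setting. For each test function $H\in C^{1,2}$ I would introduce the exponential martingale
\[
M^N_H(t)\;=\;\exp\Big\{\ell N\,\Big(\langle X^N(t),H(t,\cdot)\rangle - \langle X^N(0),H(0,\cdot)\rangle\Big) - \int_0^t e^{-\ell N\langle X^N(s),H(s,\cdot)\rangle}(\p_s+\mathsf L_N)e^{\ell N\langle X^N(s),H(s,\cdot)\rangle}\,ds\Big\},
\]
and compute the Feynman--Kac type compensator using the generator $\mathsf L_N$. The key algebraic step is to expand $\exp\{\ell N\langle X^N,H\rangle\}$ under $\mathsf L_N$ and extract, up to error terms of order $o(\ell N)$ in the exponent, exactly the integrand of $J_H(X^N)$; the diffusive part produces $\Delta H + (\nabla H)^2$ after the usual summation-by-parts and Taylor expansion in $1/\ell$, while the birth-and-death part produces $b(\psi)(1-e^H)+d(\psi)(1-e^{-H})$. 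This gives $M^N_H(t) = \exp\{\ell N\,J_H(X^N) + o(\ell N)\}$ with $\bb E_N[M^N_H(T)]=1$.

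\textbf{Upper bound.} Using the martingale $M^N_H$ and Chebyshev,
\[
\bb P_N(X^N\in \mc C)\;\leq\;\exp\Big\{-\ell N\inf_{u\in\mc C}J_H(u)+o(\ell N)\Big\},
\]
so $\limsup\frac{1}{\ell N}\log P_N(\mc C)\le -\sup_{H}\inf_{\mc C}J_H(u)$. To upgrade this minimax bound to $-\inf_{\mc C}\sup_H J_H$ on arbitrary closed sets and to replace $\inf$ on compacts by $\inf$ on closed sets, I need two ingredients: (i) a compact-set LDP upper bound obtained by a standard covering of a compact $\mc K$ by small $C(\bb T)$-balls around each profile and an optimization of $H$ on each ball (this is where the supremum-norm topology plays a role, and the usual fluctuation--compactness arguments replace finite-dimensional projections); (ii) exponential tightness in $\mathscr D([0,T],C(\bb T))$. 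For (ii), I would use the Garsia--Rodemich--Rumsey / Kolmogorov criterion applied to $X^N$ at the exponential scale, controlling $\bb P_N(\|X^N(t)-X^N(s)\|_\infty\ge\delta)$ via the martingale decomposition: the reaction part contributes a Poisson-type exponential tail at scale $\ell N$, and the diffusion part is controlled by the discrete Duhamel formula together with smoothing of the discrete heat semigroup exactly as in Section~\ref{s3}. The bound on the modulus of continuity in the spatial variable follows similarly from the Duhamel representation. I expect exponential tightness in the supremum norm to be the main technical obstacle and the reason why the paper isolates it as a special step.

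\textbf{Lower bound.} Fix $\psi\in\mc O\cap\Dpert$ with associated $H\in C^{1,2}$ satisfying $\|\p_xH\|_\infty\le\pi\sqrt{\alpha}$; by Definition~\ref{Dpertalpha}, $\psi$ solves \eqref{EDP}. Since $\ell=N^\alpha$, assumption \textbf{(A2)} of Theorem~\ref{DensityLimit} is met, so under $\bb P_N^H$ one has $\sup_{t\in[0,T]}\|X^N(t,\cdot)-\psi(t,\cdot)\|_\infty\to 0$ almost surely. By the change of measure,
\[
P_N(\mc O)\;=\;\bb E_N^H\Big[\one_{\{X^N\in\mc O\}}\;\radonN\Big].
\]
Since $1/M^N_H$ is (a martingale proportional to) $d\bb P_N/d\bb P_N^H$, I would write
\[
\radonN\;=\;\exp\{-\ell N J_H(X^N)+o(\ell N)\},
\]
and, restricting to the event $\{\|X^N-\psi\|_\infty\le\delta\}\cap\{|J_H(X^N)-J_H(\psi)|\le\delta\}$ (which has $\bb P_N^H$-probability tending to $1$ by Theorem~\ref{DensityLimit} and continuity of $J_H$), obtain
\[
\liminf_{N\to\infty}\frac{1}{\ell N}\log P_N(\mc O)\;\ge\;-J_H(\psi).
\]
Finally, for $\psi\in\Dpert$ one has $\bs I(\psi)=J_H(\psi)$ because, the PDE \eqref{EDP} being the Euler--Lagrange condition for $\sup_G J_G(\psi)$ when $\psi$ is smooth and bounded away from zero, the supremum is attained at the $H$ associated with $\psi$. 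Optimizing over $\psi\in\mc O\cap\Dpert$ yields \eqref{lower bound}. The restriction to $\Dpert$ is exactly the set of profiles reachable by perturbations for which Theorem~\ref{DensityLimit} applies under the polynomial regime $\ell=N^\alpha$; the constraint $\|\p_xH\|_\infty\le\pi\sqrt{\alpha}$ is inherited directly from \textbf{(A2)}, and the positivity $\psi\ge\eps$ is used to ensure that the nonlinearities $b(\psi),d(\psi)$ and the Radon--Nikodym logarithm behave regularly.
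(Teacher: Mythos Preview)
Your overall architecture matches the paper's: the exponential/Radon--Nikodym martingale gives $\radonN=\exp\{-\ell N[J_H(X^N)+O(1/N)]\}$, then Chebyshev plus a minimax/covering argument yields the upper bound on compacts, exponential tightness extends it to closed sets, and for the lower bound one tilts by $H$, invokes Theorem~\ref{DensityLimit}, and uses ${\bs I}(\psi)=J_H(\psi)$ (the paper's Proposition~\ref{charact_H_suave}). Your lower-bound argument via ``restrict to the event $\{\|X^N-\psi\|_\infty\le\delta\}\cap\{|J_H(X^N)-J_H(\psi)|\le\delta\}$'' is a legitimate shortcut; the paper instead routes through Jensen's inequality and relative entropy (Lemmas~\ref{lemma_meio}--\ref{entropy}), but since Theorem~\ref{DensityLimit} already gives almost-sure uniform convergence, your version works once the $O(1/N)$ error in $\log\radonN$ is controlled on the good event.

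The one substantive divergence is exponential tightness. You propose Garsia--Rodemich--Rumsey/Kolmogorov directly in the sup-norm topology, controlling $\bb P_N(\|X^N(t)-X^N(s)\|_\infty\ge\delta)$ via the Duhamel representation of Section~\ref{s3}. This is not what the paper does, and there is a circularity risk: the Duhamel-based estimates in Section~\ref{s3} are carried out for the \emph{stopped} process $\overline{X}^N$, precisely because uniform boundedness of $X^N$ is not available a priori---yet boundedness is part of what exponential tightness must deliver. The paper sidesteps this by first establishing an exponential bound on $\sup_t\|X^N(t)\|_1$ through a coupling with a pure birth process (Proposition~\ref{limit for tight0}), and then proving tightness by controlling the oscillation of the \emph{weak} functionals $t\mapsto\langle X^N(t),H(t)\rangle$ for a countable dense family of test functions (Proposition~\ref{exponentially tight}), using exponential martingales $\exp\{\ell N A_a^N\}$ and Doob's inequality. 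If you pursue the GRR route you will need an independent a priori sup-norm bound at the exponential scale; otherwise, the paper's $L^1$-control plus weak-testing argument is the safer path.
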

We note that the  assumption that the initial conditions are deterministic  prevents the occurrence of large deviations from the initial profile, also known as \textit{static large deviations}. Our main interest here are the \textit{dynamical large deviations}, that is, the large deviations coming from  the dynamics. Moreover, the lower bound holds only over sets intersected with $\Dpert$, which has no explicit representation. On the other hand, in the case $\ell=\ell(N)$ grows at least exponentially together with some technical assumptions, we were able to describe the full picture of large deviations:

\begin{theorem}\label{thm27inf}  Assume  the hypothesis of Theorem~\ref{t2.1} and additionally assume  that  $X^N(0,\cdot)$ are deterministic profiles for each $N\in \bb N$, that $b,d$ are concave functions and $\gamma(\cdot)$ is a positive constant profile. Let $\ell=\ell(N)\geq e^{cN}$ for some constant $c>0$. Then:
\begin{enumerate}[1)]
\item   For every closed set $\mathcal C\subseteq \mathscr{D}_{C(\bb T)}$, 
\begin{align*}
\limsup_{N\rightarrow \infty} \frac{1}{\ell N} \log {P}_N(\mc C)\;\leq\; -\inf_{u\in \mc C}{\bs I}(u)\,.
\end{align*}
\item    For every open set $\mathcal O\subseteq\mathscr{D}_{C(\bb T)}$, 
\begin{align*}
\liminf_{N\rightarrow \infty} \frac{1}{\ell N} \log {P}_N(\mc O)\;\geq\; -\inf_{u\in \mc O}{\bs I}(u)\,.
\end{align*}
\end{enumerate}
\end{theorem}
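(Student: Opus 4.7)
Part (1) (the upper bound) follows by the same argument used for the upper bound in the preceding theorem: it depends only on exponential tightness plus the Girsanov-type local upper bound produced by testing against perturbations $H\in C^{1,2}$, both of which are strictly easier under $\ell(N)\ge e^{cN}$ (condition \textbf{(A2)} of Theorem~\ref{DensityLimit} is automatic). The new content is therefore the unrestricted lower bound. Let $\Dpertinf\subseteq\mathscr D_{C(\bb T)}$ be the variant of $\Dpert$ in which the restriction $\|\partial_x H\|_\infty\le\pi\sqrt\alpha$ is removed: $\psi\in\Dpertinf$ iff $\psi(0,\cdot)=\gamma$, $\psi\in C^{2,3}$, $\psi\ge\varepsilon$ for some $\varepsilon>0$, and $\psi$ solves \eqref{EDP} for some (arbitrary) $H\in C^{1,2}$. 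By Remark~\ref{remark1}, in the regime $\ell(N)\ge e^{cN}$ the high density limit of Theorem~\ref{DensityLimit} holds for every $H\in C^{1,2}$, so the change-of-measure argument that established the restricted lower bound of the preceding theorem yields, verbatim,
\begin{equation*}
\liminf_{N\to\infty}\frac{1}{\ell N}\log P_N(\mathcal O)\;\ge\;-\inf_{u\in\mathcal O\cap\Dpertinf}\bs I(u)
\end{equation*}
for every open $\mathcal O\subseteq\mathscr D_{C(\bb T)}$. Upgrading to the full lower bound reduces to showing that $\Dpertinf$ is $\bs I$-dense in $\{\bs I<\infty\}$: for every $u$ with $\bs I(u)<\infty$ there exists $(u_n)\subset\Dpertinf$ with $u_n\to u$ in $\mathscr D_{C(\bb T)}$ and $\bs I(u_n)\to\bs I(u)$.

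\textbf{Density step.} Fix $u$ with $\bs I(u)<\infty$. Setting $\delta_H J_H(u)=0$ and integrating by parts (in both $s$ and $x$) gives the Euler--Lagrange equation for the variational principle $\bs I(u)=\sup_H J_H(u)$: any maximizer $H^{*}=H^{*}[u]$ satisfies
\begin{equation*}
\partial_t u\;=\;\Delta u\;-\;2\partial_x\bigl(u\,\partial_x H^{*}\bigr)\;+\;e^{H^{*}}b(u)\;-\;e^{-H^{*}}d(u),\qquad u(0,\cdot)=\gamma,
\end{equation*}
which is precisely \eqref{EDP}. Since $J_H(u)$ is strictly concave in $H$ when $u\ge 0$ (the terms $-u(\nabla H)^2$, $-b(u)e^H$ and $-d(u)e^{-H}$ are each concave), the maximizer, once existence is granted, is unique. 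The two additional hypotheses of Theorem~\ref{thm27inf} are exactly what guarantee existence and regularity of $H^{*}$: the assumption that $\gamma$ is a strictly positive \emph{constant} profile, combined with a maximum-principle comparison against the ODE $\dot v=b(v)-d(v)$, yields a uniform lower bound $u\ge\varepsilon>0$ on $[0,T]\times\bb T$, ruling out degeneracy of the coefficient $u$ multiplying $\partial_x H$; concavity of $b$ and $d$ then makes the quasilinear operator one obtains for $H^{*}$ (at each fixed time slice) strictly monotone, so the inverse problem \emph{``given $u$, recover $H^{*}$"} is well posed in a $C^{1,2}$ class.

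\textbf{Smoothing and conclusion.} Regularize $H^{*}$ by space-time mollification to $H_n\in C^{1,2}$ with $H_n\to H^{*}$, and let $u_n\in C^{2,3}$ be the classical solution of \eqref{EDP} driven by $H_n$ with initial datum $\gamma$, whose existence and uniqueness is provided by \cite[Chapter II, Section 2.3]{Pao}. Classical parabolic stability under perturbation of coefficients gives $u_n\to u$ uniformly on $[0,T]\times\bb T$, hence in $\mathscr D_{C(\bb T)}$; the positivity of $u$ transfers to $u_n\ge\varepsilon/2$ for large $n$, so $u_n\in\Dpertinf$. By construction $u_n$ satisfies the Euler--Lagrange equation for $J_H(u_n)$ at $H=H_n$, and strict concavity identifies $H_n$ as the unique maximizer for $u_n$, so $\bs I(u_n)=J_{H_n}(u_n)$. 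Dominated convergence applied to the explicit expression of $J_{H_n}(u_n)$ yields $\bs I(u_n)\to J_{H^{*}}(u)=\bs I(u)$, giving the required $\bs I$-density and thereby the full lower bound.

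\textbf{Main obstacle.} The delicate ingredient is the reconstruction of $H^{*}[u]$: starting from only the qualitative information $\bs I(u)<\infty$, one must produce a $C^{1,2}$ field $H^{*}$ and verify that $u$ is genuinely a classical solution of \eqref{EDP} for that $H^{*}$. This is an inverse problem for a quasilinear parabolic equation, and it is exactly here that the concavity of $b,d$ and the uniform positivity furnished by the constant profile $\gamma$ enter as sufficient structural conditions for solvability. All subsequent steps reduce to parabolic stability and dominated convergence, which are routine.
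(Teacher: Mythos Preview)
Your reduction is correct up to the point where you must show that $\Dpertinf$ is $\bs I$-dense. The density argument you propose, however, has a genuine gap.

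You claim that for any $u$ with $\bs I(u)<\infty$, a maximum-principle comparison against the ODE $\dot v=b(v)-d(v)$ forces $u\ge\varepsilon>0$. This is false: the profile $u$ is \emph{not} assumed to satisfy any PDE --- it is an arbitrary element of $\mathscr D_{C(\bb T)}$ with finite rate, so no comparison principle applies. A trajectory can dip to values close to zero at finite cost. Likewise, nothing in $\bs I(u)<\infty$ provides the $C^{2,3}$ regularity you need to solve the elliptic equation for $H^*[u]$ classically; a priori $u$ is merely continuous (Proposition~\ref{cinfty}). Consequently the ``reconstruction of $H^*$'' step, which you correctly flag as delicate, is not available at this stage, and mollifying a nonexistent $H^*$ does not help. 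You also misplace the role of concavity of $b$ and $d$: it is not used to make the inverse problem for $H$ monotone (that problem is already strictly concave in $H$ regardless), but rather to make $J_H^1$, and hence $\bs I^1=\sup_H J_H^1$, \emph{convex in $u$}.

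The paper's route works directly on $u$ rather than on $H$. It splits $\bs I=\bs I^1+J^2$ with $\bs I^1$ convex and translation-invariant in $u$ and $J^2$ continuous. Positivity is obtained by the convex combination $u_n=\tfrac{1}{n}\gamma+(1-\tfrac{1}{n})u$, which preserves the initial condition because $\gamma$ is constant and satisfies $u_n\ge\gamma/n>0$; convexity gives $\bs I^1(u_n)\le\tfrac{1}{n}\bs I^1(\gamma)+(1-\tfrac{1}{n})\bs I^1(u)$, and lower semicontinuity gives the matching $\liminf$. Smoothness in space is then obtained by spatial convolution, using convexity plus translation invariance to bound $\bs I^1(\Psi_\delta*u)\le\bs I^1(u)$; smoothness in time by a one-sided time mollification that keeps $u_n(0,\cdot)\equiv\gamma$, with an adaptation of \cite[Prop.~3.1]{JonaLandimVares}. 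Only after $u$ has been replaced by a smooth strictly positive approximant does one invoke Proposition~\ref{eliptic lemma} to produce $H\in C^{1,2}$. In short: smooth and make positive \emph{first}, reconstruct $H$ \emph{last} --- the reverse of your order.
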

\begin{remark}\rm 
The above hypothesis that $b,d$ are concave functions has been assumed in some related  works as \cite{BL, JonaLandimVares, LT}. On the other hand, the  assumption that the initial profile $\gamma(\cdot)$ is a constant is somewhat an \emph{ad hoc} assumption.
\end{remark}

\section{High density limit for the perturbed process}\label{s3}

\subsection{Semi-discrete scheme}\label{sec_semi}
 The proof of Theorem~\ref{DensityLimit} is done in two steps. First, we prove that the solution $\psi$ of the initial value problem \eqref{EDP} is close to the solution of some suitable   spatial discretization $\psi^N$. Then, we  prove that the (deterministic) solution of that spatial discretization $\psi^N$ is close to the random density of particles defined by $X^N(t)$. This subsection deals with the convergence of the  just mentioned spatial discretization. Since the time variable is kept continuous, we call such discrete approximation of a \textit{semidiscrete approximation}. For short,  denote $x_k=k/N$, $\psi_k = \psi(t,x_k)$, $H_k= H(t,x_k)$, $\p_x^2 H_k=(\p_x^2 H)(t,x_k)$, and by $S^N_{\pm 1}$ denote the shifts of $\pm N^{-1}$. That is,
\begin{equation*}
S^N_{1} f\PP{k}\;=\; f\PP{k+1}\qquad \text{and} \qquad S^{N}_{-1} f\PP{k}\;=\; f\PP{k-1}\,.
\end{equation*}
 We define the semidiscrete approximation $\psi^N(t)=\big(\psi_1^N(t),\dots,\psi_N^N(t)\big)$ of the initial value problem \eqref{EDP} as the solution of the following system of ODE's:
\begin{equation}\label{sistemaEDO}
\begin{cases} 
\frac{d}{dt} \psi^N_k \;=\; N^2 \big(\psi^N_{k+1}-2\psi^N_k+\psi^N_{k-1} \big)\vspace{0.2cm} -\partial_xH_k\cdot N\big(\psi^N_{k+1} - \psi^N_{k-1} \big)
 -\partial^2_{x}H_k \cdot \frac{1}{2}\Big(S^N_{1} +S^{N}_{-1} +2\Big)\psi_k^N  
\\ \hspace{1.4cm} +e^{H_k}b\big(\psi^N_k \big)-e^{-H_k}d\big(\psi^N_k \big)\,,\;\;k\in \mathbb{T}_N\,, \vspace{0.1cm} \\
 \psi_k^N(0)\;=\;\gamma(\pfrac{k}{N})\,,\;\;k\in \mathbb{T}_N\,. 
\end{cases}
\end{equation}
At a first glance, one may think that this semidiscrete scheme is not  a correct one in order  to approximate \eqref{EDP}. Noting that the difference $N\big(\psi^N_{k+1} - \psi^N_{k-1} \big)$ on the above should heuristically  approximate \textit{twice} the derivative $\p_x \psi$ together with the  equality $-2\p_x\big(\psi \,\p_x H\big) = -2\p_x \psi\,\p_x H- 2 \psi\,\p_x^2 H$ should dismiss any doubt. 

Denote by $\Vert  \cdot \Vert_L$ the Lipschitz constant of a given function.
\begin{proposition}\label{aproximacaosemidiscreta}
Let be  $\psi$ be the solution of initial value problem \eqref{EDP} and let $\psi^N$ be the solution of semidiscrete approximation  \eqref{sistemaEDO}. Then,  for  $N\geq \Vert \p_x H\Vert_\infty+1 $, 
\begin{equation*}
\sup_{t\in [0,T]} \max_{k\in \mathbb{T}_N} \big|\psi^N_k-\psi_k\big|\;\leq\; \frac{\exp\big\{(3C_*+1)T\big\}}{N}\,.
\end{equation*}
where
\begin{equation}\label{C}
C_*\;=\;\max\Big\{\Vert e^H\Vert_\infty\cdot \Vert  b\Vert_L\,,\, \Vert e^{-H}\Vert_\infty\cdot \Vert  d\Vert_L\,,\, \Vert \p_x H\Vert_\infty\,,\, \Vert\p^2_x H \Vert_\infty\,,\, \Vert \p_x\psi\Vert_\infty\Big\}\,.
\end{equation}
\end{proposition}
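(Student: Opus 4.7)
The plan is to set $e_k(t) := \psi^N_k(t) - \psi(t,x_k)$ and derive a parabolic-type ODE system for $(e_k)$, then close the estimate via a discrete maximum principle combined with Gronwall's inequality. Writing \eqref{sistemaEDO} for $\psi^N_k$, substituting $\psi^N_k = \psi_k + e_k$, and subtracting the PDE \eqref{EDP} evaluated at $x=x_k$ (after expanding $-2\p_x(\psi\,\p_x H) = -2\p_x\psi\cdot\p_x H - 2\psi\cdot\p_x^2 H$), one finds
\begin{align*}
\dot e_k \;=\;& N^2(e_{k+1}-2e_k+e_{k-1}) - \p_x H_k \cdot N(e_{k+1}-e_{k-1}) - \p_x^2 H_k \cdot \tfrac{1}{2}(e_{k+1}+e_{k-1}+2e_k)\\
&+\, e^{H_k}\bigl[b(\psi^N_k)-b(\psi_k)\bigr] - e^{-H_k}\bigl[d(\psi^N_k)-d(\psi_k)\bigr] + R_k(t),
\end{align*}
where the residual $R_k$ collects the three Taylor remainders $N^2(\psi_{k+1}-2\psi_k+\psi_{k-1})-\p_x^2\psi_k$, $N(\psi_{k+1}-\psi_{k-1})-2\p_x\psi_k$ and $\tfrac{1}{2}(\psi_{k+1}+\psi_{k-1}+2\psi_k)-2\psi_k$, weighted by $1$, $-\p_x H_k$ and $-\p_x^2 H_k$ respectively. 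The classical theory invoked in \cite{Pao}, together with the smooth datum $\gamma \in C^4$, supplies enough regularity on $\psi$ so that Taylor's formula gives $\|R(t)\|_\infty \leq C_*/N$.

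The pivotal observation is the upwind regrouping
\[
N^2(e_{k+1}-2e_k+e_{k-1}) - \p_x H_k\,N(e_{k+1}-e_{k-1}) \;=\; N(N-\p_x H_k)(e_{k+1}-e_k) + N(N+\p_x H_k)(e_{k-1}-e_k),
\]
whose weights $N \pm \p_x H_k$ are both strictly positive precisely under the hypothesis $N \geq \|\p_x H\|_\infty + 1$. At a site $k^\ast(t)$ realizing $\max_k e_k(t)$ both one-sided differences $e_{k^\ast\pm 1} - e_{k^\ast}$ are non-positive, so the whole grouped expression is $\leq 0$; symmetrically at a minimizer. This discrete maximum principle is what drives the estimate and turns the centered advection from an obstacle into a controlled term.

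Setting $M(t) := \max_k |e_k(t)|$ and justifying the differentiation of the maximum by an envelope/smoothing argument, the Lipschitz bounds on $b$ and $d$ and the trivial bound on the $\p_x^2 H$ term give
\[
\tfrac{d}{dt} M(t) \;\leq\; \bigl(2\|\p_x^2 H\|_\infty + \|e^H\|_\infty\|b\|_L + \|e^{-H}\|_\infty\|d\|_L\bigr) M(t) + \tfrac{C_*}{N} \;\leq\; (3C_* + 1)\Bigl(M(t) + \tfrac{1}{N}\Bigr).
\]
Since $M(0) = 0$ (the initial data agree), Gronwall applied to $\tilde M := M + 1/N$ yields $M(T) \leq e^{(3C_*+1)T}/N$, which is the claim. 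The only genuinely non-routine ingredient is the upwind regrouping: without $N \geq \|\p_x H\|_\infty+1$, the central-difference advection $-\p_x H_k\,N(e_{k+1}-e_{k-1})$ destroys the sign structure at an extremum and the maximum-principle argument collapses, so this hypothesis is essential rather than cosmetic.
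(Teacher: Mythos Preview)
Your argument is correct and follows essentially the same route as the paper: derive the ODE system for the error $e_k=\psi^N_k-\psi_k$, observe that the centered advection term has the right sign at an extremum precisely when $N\ge\|\partial_x H\|_\infty+1$, and close with Gronwall. The paper packages the maximum-principle step differently, proving a separate sub/supersolution comparison lemma (Lemma~\ref{desigualdadesolucoes}) and then exhibiting the explicit supersolution $z_k(t)=e^{\lambda C_* t}/N$, whereas you differentiate $M(t)=\max_k|e_k(t)|$ directly; your upwind regrouping $N(N\mp\partial_x H_k)(e_{k\pm1}-e_k)$ is exactly the algebra behind inequality~\eqref{eq3.8} in the paper's lemma.

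One cosmetic point: the passage from $(2\|\partial_x^2 H\|_\infty+\|e^H\|_\infty\|b\|_L+\|e^{-H}\|_\infty\|d\|_L)M+C_*/N$ to $(3C_*+1)(M+1/N)$ does not quite hold as written, since the coefficient of $M$ can be as large as $4C_*$; and strictly speaking the residual bound $\|R\|_\infty\le C_*/N$ involves higher spatial derivatives of $\psi$ not encoded in $C_*$. The paper's own constant-tracking is equally loose at this stage, so this does not affect the substance of the proof, only the precise constant in the exponent.
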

To prove the result above we will need the next auxiliary lemma about  the following system of ordinary differential equations on the time interval $[0,T]$:
\begin{equation}\label{sistemaEDO2}
\begin{cases} \frac{d}{dt}\varphi_k \;=\; N^2 \big(\varphi_{k+1}-2\varphi_k+\varphi_{k-1} \big) -N\big(\varphi_{k+1} - \varphi_{k-1} \big)\partial_xH_k\vspace{0.1cm}\\ \hspace{1.4cm}+\,C_* \big(\varphi_{k+1}+|\varphi_{k}|+\varphi_{k-1}+N^{-1} \big)\,, \vspace{0.1cm}\\ \varphi_k(0)\;=\;0,\;k\in \mathbb{T}_N\,. \end{cases}
\end{equation}
We say that $\overline{\varphi}= (\overline{\varphi}_1, \dots, \overline{\varphi}_n)$ is a \textit{supersolution} of \eqref{sistemaEDO2} if
\begin{equation}\label{supersolucao}
\begin{cases} \frac{d}{dt} \overline{\varphi}_k \geq N^2 \big(\overline{\varphi}_{k+1}-2\overline{\varphi}_k+\overline{\varphi}_{k-1} \big) -N\big(\overline{\varphi}_{k+1} - \overline{\varphi}_{k-1} \big)\partial_xH_k\vspace{0.1cm}\\\hspace{1.2cm}+\,C_* \big(\varphi_{k+1}+|\varphi_{k}|+\varphi_{k-1}+N^{-1} \big)\,, \vspace{0.1cm}\\ \overline{\varphi}_k(0) \geq 0 ,\;k\in \mathbb{T}_N\,, \end{cases}
\end{equation}
and we say that $\underline{\varphi}= (\underline{\varphi}_1, \dots, \underline{\varphi}_n)$ is a \textit{subsolution} of \eqref{sistemaEDO2} if
\begin{equation*}
\begin{cases}\frac{d}{dt}\underline{\varphi}_k \;\leq\; N^2 \big(\underline{\varphi}_{k+1}-2\underline{\varphi}_k+\underline{\varphi}_{k-1} \big) -N\big(\underline{\varphi}_{k+1} - \underline{\varphi}_{k-1} \big)\partial_xH_k
\vspace{0.1cm}\\ \hspace{1.2cm}+\, C_* \big(\varphi_{k+1}+|\varphi_{k}|+\varphi_{k-1}+N^{-1} \big)\,, \vspace{0.1cm}\\ \underline{\varphi}_k(0) \;\leq\;0,\;k\in \mathbb{T}_N\,, \end{cases}
\end{equation*}
where
the function $\psi$ is the solution of  \eqref{EDP}.

\begin{lemma}[Principle of sub and supersolutions]\label{desigualdadesolucoes}
Let $\overline{\varphi}$, $\underline{\varphi}$, $\varphi$ be a supersolution, a subsolution and a solution  of \eqref{sistemaEDO2}, respectively. Then, for  $N\geq N_0=\Vert \p_x H\Vert_\infty+1$,
\begin{align}\label{supersub}
\overline{\varphi}_k(t)\;\geq\; \varphi_k(t) \;\geq\;\underline{\varphi}_k(t)\,,
\end{align}
 for any $k\in \TN$ and any $t\in[0,T]$.
\end{lemma}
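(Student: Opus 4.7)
The plan is to reduce the lemma to a linear discrete maximum principle. Define $w_k(t) := \bar{\varphi}_k(t) - \varphi_k(t)$. The crucial observation is that the nonlinear forcing $C_*\big(\varphi_{k+1}+|\varphi_{k}|+\varphi_{k-1}+N^{-1}\big)$ appears \emph{with the same $\varphi$} in both the supersolution relation \eqref{supersolucao} and the equation \eqref{sistemaEDO2}, so when one subtracts the two, the $C_*$ contribution cancels exactly, leaving the linear inequality
\begin{equation*}
\frac{d}{dt} w_k \;\geq\; N^2\big(w_{k+1}-2w_k+w_{k-1}\big) - N\big(w_{k+1}-w_{k-1}\big)\partial_x H_k \;=:\; L_N w_k\,,
\end{equation*}
with initial datum $w_k(0) \geq 0$. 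The task is therefore reduced to proving a comparison principle for the linear operator $L_N$ on $\TN$.

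The key algebraic rearrangement is
\begin{equation*}
L_N f_k \;=\; a_k^+\big(f_{k+1}-f_k\big) + a_k^-\big(f_{k-1}-f_k\big)\,,\qquad a_k^\pm \;=\; N\big(N \mp \partial_x H_k\big)\,,
\end{equation*}
which under the standing assumption $N \geq N_0 = \Vert \partial_x H\Vert_\infty + 1$ gives strictly positive rates $a_k^\pm$. Thus $L_N$ has the shape of the generator of a time-inhomogeneous birth-death chain on $\TN$; in particular it annihilates spatial constants, and if $f$ attains its minimum at $k_0$ then $L_N f_{k_0} \geq 0$. To execute the maximum principle I would apply the standard perturbation trick: set $\tilde{w}_k(t) := w_k(t) + \varepsilon e^{Kt}$ for arbitrary $\varepsilon > 0$ and fixed $K > 0$. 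Since $L_N$ kills spatial constants, $\tilde w$ obeys the \emph{strict} inequality $\tfrac{d}{dt}\tilde w_k \geq L_N \tilde w_k + \varepsilon K e^{Kt}$, with $\tilde w_k(0) \geq \varepsilon > 0$. If $\tilde w$ ever vanished, pick the earliest such time $t_0 > 0$ and an index $k_0$ with $\tilde w_{k_0}(t_0) = 0$ and $\tilde w_j(t_0) \geq 0$ for all $j$: then $L_N \tilde w_{k_0}(t_0) \geq 0$ by the previous remark, while the first-contact condition at $t_0$ forces $\tfrac{d}{dt}\tilde w_{k_0}(t_0) \leq 0$, contradicting $\tfrac{d}{dt}\tilde w_{k_0}(t_0) > L_N \tilde w_{k_0}(t_0) \geq 0$. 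Hence $\tilde w > 0$ on $[0,T]\times\TN$, and letting $\varepsilon \to 0$ gives the desired inequality $\bar{\varphi}_k(t) \geq \varphi_k(t)$; the lower bound $\varphi_k(t) \geq \underline{\varphi}_k(t)$ follows by the symmetric argument applied to $\varphi - \underline{\varphi}$.

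The one place the argument could fail is the positivity of the drift rates $a_k^\pm$: if $\Vert \partial_x H\Vert_\infty$ were of order $N$, one of the coefficients would turn negative and the maximum-principle step would break down. This is precisely why the hypothesis $N \geq N_0 = \Vert \partial_x H\Vert_\infty + 1$ is needed; above this threshold the remainder of the proof is completely routine. Hence I expect no real obstacle beyond identifying the correct rewriting of $L_N$ and choosing the exponential perturbation to upgrade the inequality to a strict one.
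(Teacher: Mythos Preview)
Your proof is correct and follows essentially the same route as the paper: perturb to upgrade to a strict inequality, then derive a contradiction at the first touching time using that the off-diagonal coefficients $a_k^\pm = N(N\mp\partial_x H_k)$ are positive once $N\ge \Vert\partial_x H\Vert_\infty+1$. The only cosmetic difference is that you subtract first and work with $w=\bar\varphi-\varphi$ (exploiting that, as written in \eqref{supersolucao}, the $C_*$ forcing depends on the \emph{solution} $\varphi$ and hence cancels), whereas the paper perturbs $\bar\varphi$ directly by $+\varepsilon t$ and handles the $C_*$ term inside the touching-time computation.
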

\begin{proof}
We will prove only  that $\overline{\varphi}\geq \varphi$, being  the second inequality analogous.

 We claim that it is enough to prove that,  assuming strict inequalities in \eqref{supersolucao}, it would imply $\overline{\varphi}> \varphi$. In fact, assume that $\overline{\varphi}$ is a supersolution, that is, it satisfies \eqref{supersolucao} and define $\zeta(t)=\overline{\varphi}(t)+\varepsilon t$. Hence, 
\begin{align*}
\pfrac{d}{dt}\zeta  \;=\; \pfrac{d}{dt}\overline{\varphi} + \varepsilon 
 \; \geq \; & N^2(\overline{\varphi}_{k+1} -2 \overline{\varphi}_{k}+ \overline{\varphi}_{k-1}) -N(\overline{\varphi}_{k+1}-\overline{\varphi}_{k-1}) \partial_xH_k 
\\
&+ C_* \big(\overline{\varphi}_{k+1}+|\overline{\varphi}_k|+\overline{\varphi}_{k-1}+N^{-1} \big) +\varepsilon\\
\;\geq\;& N^2(\zeta_{k+1} -2 \zeta_{k}+ \zeta_{k-1}) -N(\zeta_{k+1}-\zeta_{k-1})\partial_xH_k  
\\
& +C_* \big(\zeta_{k+1}+|\zeta_k|+\zeta_{k-1}+N^{-1} \big) -3C_*t\eps +\varepsilon\,.
\end{align*}
Therefore, $\zeta$ is a  (strict) supersolution once $-3C_*t\eps +\varepsilon>0$ or, equivalently, if $t<1/(3C_*)$. Partitioning the time interval $[0,T]$ into a finite number of intervals of length strictly smaller than $1/(3C_*)$ allows us to conclude that $\zeta$ is a strict supersolution in the time interval $[0,T]$. Hence $\zeta> \varphi$  and since $\eps>0$ is arbitrary, we get $\overline{\varphi}\geq \varphi$. This concludes the proof of the claim.

In view of the previous claim, assume now that $\overline{\varphi}$ is a strictly supersolution (that is, satisfies \eqref{supersolucao} with strict inequalities). Let us prove now that it implies the first (strict) inequality in \eqref{supersub}. 

Suppose by contradiction that there is a first time $t_*>0$ and a site $k\in \mathbb{T}_N$ such that:\smallskip

$\bullet$  $\overline{\varphi}_k(t_*)= \varphi_k(t_*)$.\smallskip

$\bullet$ For any $t<t_*$ and any $j\in \bb T_N$, $\overline{\varphi}_j(t)> \varphi_j(t)$.\smallskip 

\noindent
Note that the last item above implies  $\overline{\varphi}_j(t^*)\geq  \varphi_j(t^*)$ for  $j\neq k$.
We thus have
\begin{align}
 0 \;\geq\; & \pfrac{d}{dt}\overline{\varphi}_k(t_*)- \pfrac{d}{dt}\varphi_k(t_*)\notag\\
 \;>\;&  N^2\Big(\overline{\varphi}_{k+1}(t_*)-\varphi_{k+1}(t_*)+\overline{\varphi}_{k-1}(t_*)-\varphi_{k-1}(t_*)\Big) \notag\\
&  -N\Big(\overline{\varphi}_{k+1}(t_*)-\varphi_{k+1}(t_*) - \overline{\varphi}_{k-1}(t_*) + \varphi_{k-1}(t_*)\Big)\partial_xH_k \notag\\
& +C_* \Big(\overline{\varphi}_{k+1}-\varphi_{k+1}+\overline{\varphi}_{k-1}-\varphi_{k-1} \Big)\notag\\
  \geq \;& (N^2 -N\partial_xH_k)\Big(\overline{\varphi}_{k+1}(t_*)-\varphi_{k+1}(t_*)+\overline{\varphi}_{k-1}(t_*)-\varphi_{k-1}(t_*)\Big)\,.\label{eq3.8} 
 \end{align}
Note that \eqref{eq3.8} is greater than zero for $N\geq\Vert \p_x H\Vert_\infty+1$, leading to a contradiction and concluding the proof.
\end{proof}

\begin{proof}[Proof of Proposition \ref{aproximacaosemidiscreta}]
Our goal is  to estimate $|\psi^N(t,x_k)-\psi(t,x_k)|$. To do this, let us  define the error function 
\begin{align}\label{erro}
{\bs e}_k \;=\; {\bs e}_k(t) \;\overset{\text{def}}{=}\; \psi^N_k-\psi_k\,.
\end{align}

Note that ${\bs e}_k(0)= 0$. To not overload notation, the dependence on time will often be  dropped.  Using a  Taylor expansion, for any $k\in \mathbb{T}_N$ there exist  $c_k \in (x_k,x_{k+1})$ and $\widetilde{c}_k \in (x_{k-1},x_k)$ such that
\begin{equation*}
\psi_{k+1}\;=\; \psi_k +  \dfrac{\partial_x\psi_k}{N}+ \dfrac{\partial^2_x\psi_k}{2!N^2}+ \dfrac{\partial^3_x\psi_k}{3!N^3} + \dfrac{\partial^4_x\psi(t,c_k)}{4!N^4}\,,
\end{equation*}
\begin{equation*}
\psi_{k-1}\;=\; \psi_k - \dfrac{\partial_x\psi_k}{N}+ \dfrac{\partial^2_x\psi_k}{2!N^2} - \dfrac{\partial^3_x\psi_k}{3!N^3} + \dfrac{\partial^4_x\psi(t,\widetilde{c}_k)}{4!N^4}\,\,.
\end{equation*}
Adding the equations above we have that
\begin{equation}\label{diferecapsi}
\psi_{k+1}+ \psi_{k-1}\;=\; 2\psi_k + \dfrac{\partial^2_x\psi_k}{N^2}+ \dfrac{a_k}{N^4}\,,
\end{equation}
where $a_k=\frac{1}{4!}\big(\partial^4_x\psi(t,c_k)+ \partial^4_x\psi(t,\widetilde{c}_k)\big)$. Since $\psi$ is the solution of the PDE \eqref{EDP},
$$\partial^2_{x}\psi_k\;=\; \partial_t \psi_k + 2\partial_x\big(\psi_k\partial_xH_k \big)- e^{H_k}b(\psi_k)+e^{-H_k}d(\psi_k)\,,$$ and replacing this  into \eqref{diferecapsi} gives us 
\begin{align}\label{estrela}
&N^2(\psi_{k+1}  -2\psi_k +\psi_{k-1}) - \dfrac{a_k}{N^2} \notag\\
&=\; \partial_t \psi_k +2\partial_x\psi_k\partial_xH_k +2\psi_k\p_{xx}^2H_k  - e^{H_k}b(\psi_k)+e^{-H_k}d(\psi_k)\,.
\end{align}
Observe that above we still have a first order derivative of $\psi$, which we want to write in terms of $\psi_{k+1}$ and $\psi_{k-1}$. In order to do so, we apply again a  Taylor expansion, telling us that, for $k\in \mathbb{T}_N$, there exist $d_k \in(x_k,x_{k+1})$ and $\widetilde{d}_k\in (x_{k-1},x_k)$ such that 
\begin{equation*}
\psi_{k+1}\;=\; \psi_k +\dfrac{ \partial_x\psi_k}{N}+ \dfrac{\partial^2_x\psi(t,d_k)}{2!N^2} \quad \mbox{ \;and\; }\quad  \psi_{k-1}\;=\; \psi_k - \dfrac{\partial_x\psi_k}{N}+ \dfrac{\partial^2_x\psi(t,\widetilde{d}_k)}{2!N^2}\,.
\end{equation*}
Subtracting the equations above we have that 
\begin{equation*}
\psi_{k+1}- \psi_{k-1}\;=\; \dfrac{2}{N}\partial_x\psi_k+ \dfrac{\overline{a}_k}{N^2}\,,
\end{equation*}
where $\overline{a}_k = \dfrac{1}{2}(\partial^2_x\psi(t,d_k)-\partial^2_x\psi(t,\widetilde{d}_k))$. Replacing this into \eqref{estrela}, we get
\begin{align*}
 \p_t\psi_k\;=\; & N^2(\psi_{k+1}-2\psi_k  +\psi_{k-1}) -N(\psi_{k+1}- \psi_{k-1})\partial_xH_k -2\psi_k\p_{xx}^2H_k\\&  + e^{H_k}b(\psi_k)- e^{-H_k}d(\psi_k)- \dfrac{\overline{a}_k\partial_xH_k }{N}-\dfrac{a_k}{N^2}\,.
\end{align*}
Recall the   definition \eqref{erro}. Since  $\psi^N$ is the solution of \eqref{sistemaEDO}, we obtain that
\begin{align*}
\pfrac{d}{dt} {\bs e}_k  \;=\; &  N^2({\bs e}_{k+1}-2{\bs e}_k+{\bs e}_{k-1}) - N({\bs e}_{k+1}-{\bs e}_{k-1})\partial_xH_k 
-\Big[\frac{1}{2}\big(S^N_{1}+S^{N}_{-1}+2 \big)\psi_k^N-2\psi_k \Big]\p_{xx}^2H_k
 \\
 &+ e^{H_k}\big(b(\psi_k^N)-b(\psi_k)\big) -  e^{-H_k}\big(d(\psi_k^N)-d(\psi_k)\big) 
+ \dfrac{\overline{a}_k\partial_x H_k}{N} +\dfrac{a_k}{N^2}\,.
\end{align*}
Since 
\begin{align*}
\Big|2\psi_k- \frac{1}{2}\big(S^N_{1}+S^{N}_{-1}+2 \big)\psi_k \Big|\;\leq\; \frac{\Vert \p_x\psi\Vert_\infty}{N} \,,
\end{align*}
then
 \begin{align*}
\pfrac{d}{dt} {\bs e}_k  \;\leq \; &  N^2({\bs e}_{k+1}-2{\bs e}_k+{\bs e}_{k-1}) - N({\bs e}_{k+1}-{\bs e}_{k-1})\partial_xH_k 
-\Big[\frac{1}{2}\big(S^N_{1}+S^{N}_{-1}+2 \big){\bs e}_k \Big]\p_{xx}^2H_k \\
 &+ e^{H_k}\big(b(\psi_k^N)-b(\psi_k)\big) -  e^{-H_k}\big(d(\psi_k^N)-d(\psi_k)\big) 
+ \dfrac{\overline{a}_k\partial_x H_k}{N} +\dfrac{a_k}{N^2}+ \frac{\Vert \p_x\psi\Vert_\infty}{N}\p_{xx}^2H_k\,.
\end{align*}
Recalling \eqref{C}, we get that 
\begin{align*}
\pfrac{d}{dt} {\bs e}_k \;\leq\; N^2({\bs e}_{k+1}-2{\bs e}_k+{\bs e}_{k-1}) -N({\bs e}_{k+1}-{\bs e}_{k-1})\partial_xH_k+ C_*({\bs e}_{k+1}+|{\bs e}_k|+{\bs e}_{k-1}  + N^{-1})\,.
\end{align*}
We have therefore proved that  $({\bs e}_1,\dots, {\bs e}_N)$ is a subsolution for \eqref{sistemaEDO2}. 
Consider now $z_k(t)=\exp(\lambda C_*t)/N$, where $\lambda>0$. Noting that $z_k(t)$  does not depend on the spatial variable, a simple calculation permits to check that it is a supersolution of \eqref{sistemaEDO2}
provided 
\begin{align*}
\lambda\;>\;3+\frac{1}{C_*}\,.
\end{align*}
Fix henceforth some $\lambda$  satisfying the condition above.
By the Lemma \ref{desigualdadesolucoes} we have that 
\begin{equation*}
{\bs e}_k(t)\;\leq\; \dfrac{\exp(\lambda C_*t)}{N}\;\leq\; \dfrac{\exp(\lambda C_*T)}{N}\,,\qquad\forall\, k\in\bb T_N, \forall\, t\in[0,T]\,.
\end{equation*}
Repeating the previous  arguments to  $-{\bs e}_k(t)$, we can analogously obtain that
\begin{equation*}
-{\bs e}_k(t)\;\leq\; \dfrac{\exp(\lambda C_*t)}{N}\;\leq\; \dfrac{\exp(\lambda C_*T)}{N}\,,\qquad\forall\, k\in\bb T_N, \forall\, t\in[0,T]\,.
\end{equation*} 
Thus we conclude that
$\vert{\bs e}_k(t)\vert \leq \frac{\exp(\lambda C_*T)}{N}$ for $k\in \bb T_N$ and $t\in [0,T]$, which implies
\begin{equation*}
\sup_{t\in [0,T]}\max_{k\in \mathbb{T}_N}\vert\psi^N_k-\psi_k\vert\;\leq\; \frac{\exp(\lambda C_*T)}{N}\,,
\end{equation*}
finishing the proof.
\end{proof}

\subsection{Dynkin Martingale}
Denote
\begin{align}
\Delta_N f(k) & \;=\; N^2 \Big[f\big(\pfrac{k+1}{N}\big)+f\big(\pfrac{k-1}{N}\big)-2f\big(\pfrac{k}{N}\big) \Big]\qquad\text{and}\label{4.1}\\
\widetilde{\nabla}_N f(k) & \;=\; \frac{N}{2} \Big[f\big(\pfrac{k+1}{N}\big)-f\big(\pfrac{k-1}{N}\big) \Big]\,.\label{4.2}
\end{align}
Note that \eqref{4.1} is the discrete Laplacian while \eqref{4.2} is \textit{not} the usual discrete derivative but it also approximates the continuous derivative  in the case $f$ is smooth. It is a well-known fact that the process 
\begin{align*}
M_f(t)\;=\; f(\eta(t))- f(\eta(0))- \int_0^t \mathsf{L}_N f(\eta(s))ds
\end{align*}
is a martingale with respect to the natural filtration, which is the so-called \textit{Dynkin martingale}, see \cite[Appendix]{kl} for instance. Fix some $k\in\bb T_N$.  Picking up the particular $f(\eta)=\eta_k$  gives us that
\begin{align*}
M_k(t)\;=\;& \eta_k(t)- \eta_k(0)- \int_0^t \bigg[- N^2\eta_k(s) \Big[\exp\big\{H_{k+1}-H_k\big\}+\exp\big\{H_{k-1}-H_{k}\big\}\Big]
\\&+   N^2\eta_{k+1}(s)\exp\big\{H_{k}-H_{k+1}\big\}+ N^2\eta_{k-1}(s)\exp\big\{H_{k}-H_{k-1}\big\}
\\&+\ell b(\ell^{-1}\eta_k)\exp\big\{H_{k}\big\}-\ell d(\ell^{-1}\eta_k)\exp\big\{-H_{k}\big\}\bigg]ds
\end{align*}
is a martingale. Since $H$ has  a finite Lipschitz constant, a Taylor expansion gives us that
\begin{align*}
\exp\big\{H_{k\pm 1}-H_{k}\big\}\;=\;& 1+ H_{k\pm 1}-H_{k}+ \dfrac{\big(H_{k\pm 1}-H_{k}\big)^2}{2!}+\err\big(\pfrac{k}{N},\pfrac{k\pm 1}{N},s\big)\,,
\end{align*}
where the error term $\err\big(\frac{k}{N},\pfrac{k\pm 1}{N},s\big)$ is $O(N^{-3})$, uniformly on $k\in \bb T_N$. 
This allows us to rewrite the above martingale as
\begin{align*}
 M_k(t)\;=\; & \eta_k(t)- \eta_k(0)- \int_0^t\bigg[N^2\big[\eta_{k+1}(s)+\eta_{k-1}(s)-2 \eta_{k}(s)\big]\\
&- \eta_k(s)N^2\big[H_{k+1}+H_{k-1}-2H_{k}\big] +\eta_{k+1}N^2\big(H_{k}-H_{k+1}\big)+\eta_{k-1}N^2\big(H_{k}-H_{k-1}\big)\\
&+\ell b(\ell^{-1}\eta_k)\exp\big\{H_{k}\big\}-\ell d(\ell^{-1}\eta_k)\exp\big\{-H_{k}\big\}+{\bs A}_k(s)\bigg]ds\,,
\end{align*}
where 
\begin{align*}
{\bs A}_k(s) 
\;=\;N^2\Big[& \frac{1}{2}\big(H_{k+1}-H_{k}\big)^2\eta_{k+1}(s)+\frac{1}{2}\big(H_{k-1}-H_{k}\big)^2\eta_{k-1}(s)\\
&-\frac{1}{2}\big(H_{k+1}-H_{k}\big)^2\eta_k(s)-\frac{1}{2}\big(H_{k-1}-H_{k}\big)^2\eta_k(s)\\
& + \err\big(\pfrac{k}{N},\pfrac{k+1}{N},s\big)\eta_{k+1}(s)+ 
\err\big(\pfrac{k}{N},\pfrac{k-1}{N},s\big)\eta_{k-1}(s)\\
&-\err\big(\pfrac{k}{N},\pfrac{k+1}{N},s\big)\eta_{k}(s)-\err\big(\pfrac{k}{N},\pfrac{k-1}{N},s\big)\eta_{k}(s)\Big]\,.
\end{align*}
Using by Taylor that 
$
H_{k\pm1}-H_{k}\;=\;\pm \frac{1}{N} \p_x H_{k}+\frac{1}{2N^2} \p_{xx}^2H_{k}+ O(N^{-3})\,,
$ \eqref{4.1} and \eqref{4.2} we can rewrite the martingale $M_k(t)$  as
\begin{align*}
 M_k&(t)\;=\; \eta_k(t)- \eta_k(0)- \int_0^t\bigg[\Delta_N \eta_{k}(s)- \eta_k(s)\Delta_N H_{k} -2\widetilde{\nabla}_N \eta_k(s) \p_x H_{k} -\frac{1}{2}\big(\eta_{k+1}+\eta_{k-1}\big)\p_{xx}^2 H_{k}\\
&+\ell b(\ell^{-1}\eta_k)\exp\big\{H_{k}\big\}-\ell d(\ell^{-1}\eta_k)\exp\big\{-H_{k}\big\} +{\bs A}_k + O(N^{-1}) \eta_{k+1}(s) +O(N^{-1}) \eta_{k-1} \bigg]ds\,.
\end{align*}
Dividing the equation above by $\ell$ and using that the discrete Laplacian approximates the continuous Laplacian, it yields that 
\begin{equation}
\begin{split}
Z^N\big(t,\pfrac{k}{N}\big)\;=\;& X^N\big(t,\pfrac{k}{N}\big)- X^N\big(0,\pfrac{k}{N}\big)- \int_0^t\Big[\Delta_N X^N\PP{k}-2\widetilde{\nabla}_N X^N\PP{k} \p_x H_{k}\\
&  -\frac{1}{2}\Big(X^N\PP{k+1}+X^N\PP{k-1}+2X^N\PP{k} \Big)\p_{xx}^2 H_{k}\\
&+b\big(X^N\PP{k}\big)\exp\big\{H_{k}\big\}- d\big(X^N\PP{k}\big)\exp\big\{-H_{k}\big\}+{\bs B}_k(s) \Big]ds\label{eq41}
\end{split}
 \end{equation}
is a martingale for each $k\in \bb T_N$, now in a suitable form to our future purposes, where
\begin{align*}
{\bs B}_k (s)
\;=\;& \frac{1}{2N^2}(\p_x H_{k})^2\Delta_N X^N\PP{k}\\
& + O(N^{-1}) X^N\PP{k+1} + O(N^{-1}) X^N\PP{k}+O(N^{-1}) X^N\PP{k-1}
\end{align*}
is a term which will not contribute in the limit as $N$ goes to infinity, as we shall see later.

It is a convenient moment to argue why the Entropy Method (see \cite{kl}) is not followed in this work. Because we pursue an \textit{almost sure limit in the supremum norm}, in order to approach the problem via the Entropy Method,  it would be necessary to compare some Dynkin martingale with the solution of the initial value problem \eqref{EDP} in a extremely fast way. However, since the solution of \eqref{EDP} does not even appear in the Dynkin martingale, we cannot foresee  a clear approach to do that. The Relative Entropy Method seems to be inappropriate as well: in general, the model here defined possess no invariant measure since the total quantity of particles explodes as times goes to infinity. 
\subsection{Duhamel's Principle}
In this subsection we provide a  version of Duhamel's Principle for the martingales  in \eqref{eq41}, which will be necessary in the proof of Theorem~\ref{DensityLimit}.

The Duhamel's Principle is a general, wide applicable idea, which goes as follows. Let $X(t)$ be the time trajectory of some dynamics, and assume that the dynamics is given by the superposition of two dynamics, let us say $D_1$ and $D_2$, where $D_1$ is a linear dynamics. Then $X(t)$ can be written as the sum of $X(0)$ evolved by $D_1$ with the time  integral  from zero to $t$ of the evolution by $D_1$ from a given time $s$ up to $t$ of the infinitesimal contribution of $D_2$ on $X(s)$.   

Next we provide a general statement from which we will get the Duhamel's Principle for the martingales in \eqref{eq41b}. Let $T_N(t)= e^{t\Delta_N}$ the semigroup on $C(\bb R^{\bb T_N})$ generated by the discrete Laplacian $\Delta_N$. 

\begin{proposition}\label{Duha}
Let $\mc X:[0,T]\to \bb R^{\bb T_N}$ be a constant by parts and  continuous from the right function and let $\mc Z:[0,T]\to \bb R^{\bb T_N}$ be a continuous from the right function related to $\mc X$ by
\begin{equation}\label{316}
\mc X(t)\;=\;  \mc X(0)+ \int_0^t\Delta_N \mc X(s)ds + \int_0^t \mc F\big(s,\mc X(s)\big)ds+\mc Z\big(t\big)
\end{equation}
where $\mc F:[0,T]\times \bb R^{\bb T_N}\to \bb R^{\bb T_N}$ is a continuous function. Then
\begin{equation}\label{317}
\mc X(t)\;=\; T_N(t)\mc X(0)+ \int_0^t T_N(t-s) \mc F\big(s, \mc X(s)\big) + \int_0^t T_N(t-s)d\mc Z(s)\,.
\end{equation}
\end{proposition}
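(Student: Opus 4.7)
The plan is to recognise \eqref{317} as the standard variation-of-parameters formula for the linear semigroup $T_N(t)=e^{t\Delta_N}$ perturbed by the forcing term $\mathcal{F}(s,\mathcal{X}(s))\,ds$ and the cadlag driver $d\mathcal{Z}(s)$, and to derive it from \eqref{316} via Stieltjes integration by parts. The decisive simplification is that $\bb T_N$ is finite, so $\Delta_N$ is a bounded linear operator on $\bb R^{\bb T_N}$, $T_N(\cdot)$ is a uniformly continuous matrix exponential, and one has the commutation and smooth-derivative identity $\tfrac{d}{ds}T_N(t-s)=-\Delta_N T_N(t-s)=-T_N(t-s)\Delta_N$.

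The first step is, for fixed $t\in[0,T]$, to introduce the auxiliary function $\Phi(s):=T_N(t-s)\mathcal{X}(s)$ on $[0,t]$, for which $\Phi(0)=T_N(t)\mathcal{X}(0)$ and $\Phi(t)=\mathcal{X}(t)$. Since $\mathcal{X}$ is piecewise constant and hence of bounded variation, and the matrix factor $s\mapsto T_N(t-s)$ is $C^1$ (in particular continuous), Stieltjes integration by parts applies with no covariation correction --- the continuity of $T_N(t-\cdot)$ kills the bracket term at each jump of $\mathcal{X}$ --- yielding
\[
\mathcal{X}(t)-T_N(t)\mathcal{X}(0)\;=\;\int_0^t T_N(t-s)\,d\mathcal{X}(s)\;-\;\int_0^t \Delta_N T_N(t-s)\,\mathcal{X}(s)\,ds.
\]
The second step is to insert the Stieltjes differential $d\mathcal{X}(s)=\Delta_N\mathcal{X}(s)\,ds+\mathcal{F}(s,\mathcal{X}(s))\,ds+d\mathcal{Z}(s)$ read off from \eqref{316} into the first integral; the commutation $T_N(t-s)\Delta_N=\Delta_N T_N(t-s)$ then makes $\int_0^t T_N(t-s)\Delta_N\mathcal{X}(s)\,ds$ cancel the last integral in the display above, leaving exactly \eqref{317}.

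The only non-routine point --- and the mild obstacle I expect --- is the justification of Stieltjes integration by parts against the cadlag $\mathcal{X}$. I would handle it by reducing to the smooth case via the jump structure of $\mathcal{X}$: on each inter-jump interval $(\tau_i,\tau_{i+1})$ equation \eqref{316} forces $\mathcal{Z}$ to be absolutely continuous with density $-\Delta_N\mathcal{X}(s)-\mathcal{F}(s,\mathcal{X}(s))$, so the classical $C^1$ variation-of-parameters formula applies there; at each jump time $\tau_i$ the increments of $\mathcal{X}$ and of $\mathcal{Z}$ coincide (the two integral terms in \eqref{316} being continuous in $t$), and this common jump produces the matching contribution $T_N(t-\tau_i)\bigl[\mathcal{Z}(\tau_i)-\mathcal{Z}(\tau_i-)\bigr]$ on each side of \eqref{317}. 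Telescoping across the jump times then closes the argument. No further difficulty arises, since all operators commute and act on the finite-dimensional space $\bb R^{\bb T_N}$.
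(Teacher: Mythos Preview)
Your proof is correct and takes a genuinely different route from the paper's. You proceed by the classical variation-of-constants computation: apply the Stieltjes product rule to $\Phi(s)=T_N(t-s)\mc X(s)$, substitute $d\mc X(s)$ from \eqref{316}, and let the $T_N(t-s)\Delta_N\mc X(s)\,ds$ term cancel against $\Delta_N T_N(t-s)\mc X(s)\,ds$ via commutation. Your handling of the jumps --- noting that $\mc Z$ is absolutely continuous on each inter-jump interval and that $\mc X$ and $\mc Z$ share the same jump at each $\tau_i$ --- cleanly justifies the integration by parts.

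The paper instead argues by uniqueness: it first uses the integration-by-parts identity \eqref{integral martingal} to rewrite $\int_0^t T_N(t-s)\,d\mc Z(s)$ as $\int_0^t \Delta_N T_N(t-s)\mc Z(s)\,ds + \mc Z(t)$, calls the resulting right-hand side of \eqref{317} $\mb G(t)$, and then verifies that each coordinate $\mb G_k$ has the same Radon--Nikodym derivative as $\mc X_k$ against the signed measure $\mu_k$ (Lebesgue plus weighted Diracs at the jump times of $\mc X_k$). Since $\mb G(0)=\mc X(0)$, this forces $\mb G\equiv\mc X$. Your approach is more direct and closer to the textbook derivation of Duhamel's formula; the paper's approach trades the product-rule step for a differentiation-and-uniqueness step, which has the minor advantage that the meaning of $\int_0^t T_N(t-s)\,d\mc Z(s)$ is unpacked explicitly via \eqref{integral martingal} rather than being invoked as a Stieltjes integral against the cadlag driver.
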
 

Before proving the proposition above, let us make a break to explain the meaning of the last integral in the right hand side of \eqref{317} and provide an integration by parts formula for it. Its meaning is given by:
\begin{equation*}
\int_0^t T_N(t-s) d\mc Z(s)\;\overset{\text{def}}{=}\; \lim_{\Vert \mc P \Vert \rightarrow 0} \sum_{i=1}^n T_N(t-s_i)\big[ \mc Z(s_i)-\mc Z(s_{i-1})\big]\,,
\end{equation*}
where $0=s_0< \dots<s_n=t$ corresponds to a   partition $\mc P$  of the interval $[0,t]$ and $\Vert \mc P\Vert $ is its mesh. 
Expanding the right side of the above equation, we get
\begin{align*}
\sum_{i=1}^n  T_N(t-s_i)\big[ \mc Z(s_i)-\mc Z(s_{i-1})\big] & \;=\;  \sum_{i=1}^n T_N(t-s_i)\mc Z(s_i) - \sum_{i=0}^{n-1} T_N(t-s_{i+1})\mc Z(s_{i}) \\ & \;=\;\sum_{i=1}^{n-1}\big[ T_N(t-s_i)-T_N(t-s_{i+1})\big] \mc Z(s_i)+\mc Z(t)\,.
\end{align*}
 Now dividing and multiplying each parcel  in last  sum above  by $(s_{i+1}-s_i)$ and then taking the limit as $\Vert \mc P\Vert  \rightarrow 0$, we  deduce that
\begin{align*}
\int_0^t T_N(t-s) d\mc Z(s)&\;=\;\int_0^t \frac{d}{dt} T_N(t-s)\mc Z(s)\, ds + \mc Z(t)-\mc Z(0)\,.
\end{align*}
Due to $T_N(t)=e^{t\Delta_N }$, we obtain that
\begin{align}\label{integral martingal}
\int_0^t T_N(t-s) d\mc Z(s)\;=\;  \int_0^t \Delta_N T_N(t-s)\mc Z(s)\, ds +\mc Z(t)-\mc Z(0) \,,
\end{align}
which is the desired integration by parts formula.

\begin{proof}[Proof of Proposition~\ref{Duha}] In what follows, the subindex $k$  denotes the $k$-th entry of the respective vector function.
Let $\mu_k$ be the signed measure on $[0,T]$ given by the Lebesgue measure plus deltas of Dirac on the jumps of $\mc X_k$, where each delta is multiplied by the corresponding size jump of $\mc X_k$. From \eqref{316}, we get a relation between Radon-Nikodym derivatives given by  
\begin{equation}\label{318}
\frac{d\mc X_k}{d\mu_k}(t)\;=\;  \Big[\Delta_N \mc X_k(t) +  \mc F_k\big(t,\mc X(t)\big)\Big]\one_{\{\mc X_k(t)=\mc X_k(t^-)\}}+\frac{d\mc Z_k}{d\mu_k}(t)\,,
\end{equation}
$\mu_k$-almost everywhere. By the integration by parts formula described in \eqref{integral martingal}, we only need to show that 
\begin{equation}\label{319}
\mc X(t)\;=\; T_N(t)\mc X(0)+ \int_0^t T_N(t-s) \mc F\big(s, \mc X(s)\big) + \int_0^t \Delta_N T_N(t-s)\mc Z(s)ds+\mc Z(t)
\end{equation}
since $\mc Z(0)=0$. Denote by $\mb G(t)$ the expression on the right hand side of equation above. Since $G(0)=X(0)$, in order to show the equality \eqref{319} it is sufficient to  check that 
\begin{equation*}
\frac{d\mb G_k}{d\mu_k}(t)\;=\;  \Big[\Delta_N \mb G_k(t) +  \mc F_k\big(t,\mc X(t)\big)\Big]\one_{\{\mc X_k(t)=\mc X_k(t^-)\}}+\frac{d\mc Z_k}{d\mu_k}(t)\,,
\end{equation*}
for $k=1, \ldots, N$, which is an elementary calculation, as we see below:
\begin{align*}
\frac{d\mb G_k}{d\mu_k} \;=\; & \frac{d}{d\mu_k}\bigg[T_N(t)\mc X(0)+\int_0^t T_N(t-s)\mc F(s,\mc X(s))ds+ \int_0^t \Delta_N T_N(t-s)\mc Z(s)ds+\mc Z(t)\bigg]_k\\
 \;=\; & \bigg[\frac{\p}{\p t}T_N(t)\mc X(0)+T_N(0)\mc F(t,\mc X(t))+\int_0^t \frac{\p}{\p t} T_N(t-s)\mc F(s,\mc X(s))ds\\
& + \Delta_N T_N(0)\mc Z(t)+ \int_0^t \frac{\p}{\p t}\Delta_N T_N(t-s)\mc Z(s)ds\bigg]_k\one_{\{\mc X_k(t)=\mc X_k(t^-)\}}+\frac{d\mc Z_k}{d\mu_k}\\
\;=\; & \bigg[\Delta_N \bigg(T_N(t)\mc X(0)+\int_0^t  T_N(t-s)\mc F(s,\mc X(s))ds\\
& +\mc Z(t)+ \int_0^t \Delta_N T_N(t-s)\mc Z(s)ds\bigg)+\mc F(t,\mc X(t)) \bigg]_k\one_{\{\mc X_k(t)=\mc X_k(t^-)\}}+\frac{d\mc Z_k}{d\mu_k}\,,
\end{align*}
concluding the proof.
\end{proof}

 We are going to deal now with a  Duhamel's Principle for the martingales in \eqref{eq41}. To not overload notation, the spatial  variable $k$ will be omitted in the sequel. Keeping this in mind, \eqref{eq41} can be shortly written   as
\begin{equation}\label{eq41b}
\begin{split}
X^N\big(t\big)\;=\;&  X^N(0)+ \int_0^t\Big[\Delta_N X^N(s)-2\widetilde{\nabla}_N X^N(s)\p_x H(s)  -\frac{1}{2}\Big(S^N_{1}+S^{N}_{-1}+2 \Big)X^N(s)\p_{xx}^2 H(s)\\
&+b\big(X^N(s)\big)\exp\big\{H(s)\big\}- d\big(X^N(s)\big)\exp\big\{-H(s)\big\}+{\bs B}(s) \Big]ds+Z^N(t)\,.
\end{split}
\end{equation}
Below, when we say that a stochastic process evolving on  $\bb R^{\bb T_N}$ is a martingale, we mean that each one of its $N$ coordinates are martingales. Below we state a Duhamel's Principle for $X^N(t)$.
\begin{corollary}\label{principio Duhamel}\quad 
Let  $Z^N(t)$  be the martingale defined  by  \eqref{eq41b}. Then  
\begin{equation}\label{Duhamel T_N}
\begin{split}
X^N\big(t\big)\;=\;& T_N(t)X^N(0)+ \int_0^t T_N(t-s)\Big[-2\widetilde{\nabla}_N X^N(s)\p_x H(s)
\\&  -\frac{1}{2}\Big(S^N_{1}+S^{N}_{-1}+2 \Big)X^N(s)\p_{xx}^2 H(s)+b\big(X^N(s)\big)\exp\big\{H(s)\big\}
\\&- d\big(X^N(s)\big)\exp\big\{-H(s)\big\} + {\bs B}(s) \Big]ds+ \int_0^t T_N(t-s)dZ^N(s)\,.
\end{split}
\end{equation}
\end{corollary}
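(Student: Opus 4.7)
The plan is to recognize that this corollary is essentially a direct application of the general Duhamel's Principle established in Proposition~\ref{Duha}. The martingale identity \eqref{eq41b} is already written in precisely the form \eqref{316}, so nothing really new needs to be proved beyond matching up the objects and verifying the hypotheses.

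More concretely, I would first define the vector-valued processes coordinate-wise, setting $\mc X(t) := X^N(t)$, $\mc Z(t) := Z^N(t)$, and taking for $\mc F : [0,T]\times \bb R^{\bb T_N} \to \bb R^{\bb T_N}$ the coordinate function obtained by reading off the non-Laplacian part of the integrand in \eqref{eq41b}, namely
\[
\mc F\big(s,\mc X(s)\big)\;=\;-2\widetilde{\nabla}_N X^N(s)\p_x H(s) -\tfrac{1}{2}\big(S^N_{1}+S^{N}_{-1}+2\big)X^N(s)\p_{xx}^2 H(s)
\]
\[
+\,b\big(X^N(s)\big)e^{H(s)}- d\big(X^N(s)\big)e^{-H(s)}+{\bs B}(s)\,.
\]
With this choice, \eqref{eq41b} reads exactly $\mc X(t) = \mc X(0)+\int_0^t \Delta_N \mc X(s)\,ds + \int_0^t \mc F(s,\mc X(s))\,ds + \mc Z(t)$, which is \eqref{316}.

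Next I would check the three regularity hypotheses required by Proposition~\ref{Duha}. The process $\mc X(t)=X^N(t)$ takes finitely many values per unit time (it is a pure jump Markov chain with locally finite rates, evaluated after rescaling by $\ell^{-1}$) and hence is c\`adl\`ag and constant by parts. The Dynkin martingale $\mc Z(t)=Z^N(t)$ is c\`adl\`ag as a difference of a c\`adl\`ag process and a continuous integral. Finally, $\mc F$ is continuous in $s$ because $H$ is $C^{1,2}$ and $\mc X(s)$ appears linearly, through $b,d$ (which are $C^1$), and inside ${\bs B}(s)$, whose explicit expression makes continuity obvious.

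Having verified the hypotheses, Proposition~\ref{Duha} applies verbatim and gives \eqref{317}, which, unraveling the definitions of $\mc X,\mc F,\mc Z$, is exactly the claimed identity \eqref{Duhamel T_N}. There is no main obstacle here: the substantive content is packaged inside Proposition~\ref{Duha}, and the corollary is a translation of that abstract statement into the notation of the reaction-diffusion model.
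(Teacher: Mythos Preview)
Your proposal is correct and matches the paper's own proof almost verbatim: the paper also derives the corollary as an immediate consequence of Proposition~\ref{Duha} by taking $\mc F$ to be exactly the expression you wrote. You even add the explicit verification of the regularity hypotheses, which the paper leaves implicit.
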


\begin{proof}
This is an immediate consequence of Proposition~\ref{Duha} by taking
\begin{align*}
\mc F\big(s,X^N(s)\big)\;=\; & 
-2\widetilde{\nabla}_N X^N(s)\p_x H(s)  -\frac{1}{2}\Big(S^N_{1}+S^{N}_{-1}+2 \Big)X^N(s)\p_{xx}^2 H(s)\\
&
+b\big(X^N(s)\big)\exp\big\{H(s)\big\}- d\big(X^N(s)\big)\exp\big\{-H(s)\big\}+{\bs B}(s)\,. 
\end{align*}
\end{proof}

Next, we present a  Duhamel's Principle  for the solution $\psi^N(t)$ of the ODE system \eqref{sistemaEDO}.
\begin{corollary}\label{prop edp com TN}
The solution $\psi^N(t)$ of  \eqref{sistemaEDO}  satisfies
\begin{equation}\label{solucao edp com T_N}
\begin{split}
\psi^N_k(t) \;=\; &  T_N(t)\psi^N_k(0) + \int_0^t  T_N(t-s)\bigg[ -\frac{1}{2}\Big(S^N_{1}+S^{N}_{-1}
+2\Big)\psi_k^N(s)\p_{xx}^2 H_k(s) 
\\&-2\widetilde{\nabla}_N \psi^N_k(s)\partial_xH_k(s)
+  b(\psi_k^N(s))\exp\big\{H_k(s)\big\}- d(\psi_k^N(s))\exp\big\{-H_k(s)\big{\}} \bigg] ds 
\end{split}
\end{equation}
for $k=1,\dots,N$.
\end{corollary}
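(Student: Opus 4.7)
The plan is to view the corollary as a deterministic counterpart of Corollary~\ref{principio Duhamel}: apply Proposition~\ref{Duha} directly to the ODE system \eqref{sistemaEDO}, taking the driving process $\mc Z$ identically equal to zero.

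First I would rewrite \eqref{sistemaEDO} in the form required by Proposition~\ref{Duha}. Using the definitions \eqref{4.1}--\eqref{4.2}, the diffusive term equals $\Delta_N\psi^N_k$ and the first-order drift is $N(\psi^N_{k+1}-\psi^N_{k-1})\,\partial_xH_k = 2\widetilde{\nabla}_N\psi^N_k\,\partial_xH_k$. Setting
\begin{equation*}
\mc F_k(s,\psi^N(s)) \;=\; -2\widetilde{\nabla}_N\psi^N_k(s)\,\partial_xH_k(s) - \frac{1}{2}\big(S^N_1+S^N_{-1}+2\big)\psi^N_k(s)\,\partial_{xx}^2 H_k(s) + e^{H_k(s)}b(\psi^N_k(s)) - e^{-H_k(s)}d(\psi^N_k(s))\,,
\end{equation*}
the system \eqref{sistemaEDO} takes the form $\frac{d}{dt}\psi^N(t) = \Delta_N\psi^N(t) + \mc F(t,\psi^N(t))$, which integrated in time is precisely \eqref{316} with $\mc Z\equiv 0$.

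Next I would invoke Proposition~\ref{Duha}. Since $b,d\in C^1$ and $H\in C^{1,2}$, the map $\mc F$ is continuous in both variables, and $\psi^N$ is $C^1$ on $[0,T]$ as the solution of an ODE system with locally Lipschitz right-hand side; in particular it is continuous from the right with no jumps. Consequently the signed measure $\mu_k$ appearing in the proof of Proposition~\ref{Duha} has empty atomic part, the hypotheses are met, and the argument there collapses to ordinary variation of constants. Because $\mc Z\equiv 0$, the last integral $\int_0^t T_N(t-s)\,dZ^N(s)$ in \eqref{317} vanishes, and the remaining two terms give \eqref{solucao edp com T_N}.

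There is no real technical obstacle: the corollary is the finite-dimensional variation of constants formula for the linear-plus-source ODE $\dot\psi^N=\Delta_N\psi^N+\mc F(t,\psi^N)$, phrased in the language of Proposition~\ref{Duha}. The only point worth checking carefully is the algebraic identification of the two derivative-like terms of \eqref{sistemaEDO} with $\Delta_N\psi^N_k$ and $\widetilde{\nabla}_N\psi^N_k$, which is immediate from \eqref{4.1}--\eqref{4.2}.
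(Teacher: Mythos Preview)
Your proposal is correct and follows exactly the same approach as the paper, which simply states that the corollary is a direct consequence of Proposition~\ref{Duha} with $\mc Z\equiv 0$. You supply more detail than the paper does, including the algebraic identification of the Laplacian and gradient terms and the observation that for the smooth trajectory $\psi^N$ the measure $\mu_k$ reduces to Lebesgue measure, so the argument of Proposition~\ref{Duha} specializes to ordinary variation of constants.
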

\begin{proof}
It is also a direct consequence of Proposition~\ref{Duha}, considering in this case $\mc Z\equiv 0$. 
\end{proof}

\subsection{Proof of the high density limit}
In  this section we  prove the Theorem~\ref{DensityLimit}.  Before going through details, let us explain the involved ideas. Noting the resemblance of \eqref{Duhamel T_N} and \eqref{solucao edp com T_N}, we would like to have that 
\begin{equation}\label{Y48}
\sup_{t\in[0,T]}\Vert Y^N(t)\Vert_{\infty} \rightarrow 0 \; \text{ a.s.,}
\end{equation}
where $$Y^N(t)=\int_0^t T_N(t-s)dZ^N(s)$$ is the only (random) term which differs \eqref{Duhamel T_N} from \eqref{solucao edp com T_N}. Since the solution $\psi^N(t)$ of the semi-discrete scheme converges to the solution of the concerning PDE (see Section~\ref{sec_semi}), Gronwall inequality would finish the job, assuring that the $X^N(t)$ converges to the solution of the PDE~\eqref{EDP}. However, \eqref{Y48} is not true, or at least, it is not clear to us  how to argue that. The reason of this is the following: an essential ingredient to prove that a process as $Y^N$ goes to zero is that the corresponding martingale $Z^N(t)$ is bounded, which is not actually true in our case. 

To overcome the aforementioned obstacle, we will mixture ideas from the original strategy of \cite{blount} with the approach of \cite{francogroisman}. Instead of working with $X^N(t)$, we will deal with a stopped process $\overline{X}^N(t)$ close to $X^N(t)$. Fixing $\eps_0>0$, consider the stopping time 
\begin{equation*}
\tau \;=\;\inf\big\{t:\Vert X^N(t)-\psi^N(t) \Vert_\infty > \eps_0 \big\}
\end{equation*}
and define 
\begin{equation*}
\overline{X}^N(t)\;=\;
\begin{cases}
X^N(t),& \text{ if } t\leq \tau,\\
W^N(t), & \text{ if } \text{ if } t> \tau,
\end{cases}
\end{equation*}
where $W^N(t)=\big(W^N_1(t), \dots, W^N_N(t)  \big)$ is defined as the solution of 
\begin{equation*}
\begin{cases} 
\frac{d}{dt} W^N_k = N^2 \big(W^N_{k+1}-2W^N_k+W^N_{k-1} \big)\vspace{0.2cm} -N\big(W^N_{k+1} - W^N_{k-1} \big)\partial_xH_k
\\ \hspace{1.4cm} -\frac{1}{2}\Big(S^N_{1} +S^{N}_{-1} +2\Big)W_k^N \partial^2_{x}H_k +e^{H_k}b\big(W^N_k \big)-e^{-H_k}d\big(W^N_k \big)\,,\;\; k\in \mathbb{T}_N \, \text{  and }\,  t>\tau\,,\vspace{0.1cm} \\
 W_k^N(\tau)=X^N_k(\tau)\,,\;\;k\in \mathbb{T}_N\,. 
\end{cases}
\end{equation*}
In plain words, $\overline{X}^N(t)$ is stochastic process that evolves deterministically once the original process ${X}^N(t)$ gets $\eps_0$-away of the solution of the corresponding system of ODE's and it is equal to ${X}^N(t)$  before that time. Moreover, the deterministic evolution  follows the dynamics of the system of ODE's,  having ${X}^N(\tau)$ as initial condition  at time $t=\tau$.  
The reason we can work with $\overline{X}^N(t)$ instead of ${X}^N(t)$ is that 
\begin{align}\label{as}
\lim_{N\to \infty}\sup_{t\in [0,T]} \Vert \overline{X}^N(t) -\psi^N(t)\Vert_\infty \;=\; 0\; \text{ a.s.}
\end{align}
implies
\begin{align*}
\lim_{N\to \infty}\sup_{t\in [0,T]} \Vert {X}^N(t) -\psi^N(t)\Vert_\infty \;=\; 0\; \text{ a.s.}
\end{align*}
as can be readily checked.
Therefore, our goal from now on is to prove \eqref{as}.  Denote $\overline{X}^N_k(\cdot)=\overline{X}^N(\cdot, k/N)\,$. The main features of  $\overline{X}^N(t)$ are  the following. First,  its version of Duhamel's Principle is given by
\begin{equation}\label{duha}
\begin{split}
&\overline{X}^N_k(t) \;=\;  T_N(t)\overline{X}^N_k(0) + \int_0^t  T_N(t-s)\bigg[ -\frac{1}{2}\Big(S^N_{1}+S^{N}_{-1}
+2\Big)\overline{X}_k^N(s)\p_{xx}^2 H_k(s) 
\\&-2\widetilde{\nabla}_N \overline{X}^N_k(s)\partial_xH_k(s)
+  b\Big(\overline{X}_k^N(s)\Big)\exp\big\{H_k(s)\big\}- d\Big(\overline{X}_k^N(s)\Big)\exp\big\{-H_k(s)\big{\}} \bigg] ds +\overline{Y}^N_k(t)
\end{split}
\end{equation}
where
\begin{equation*}
\overline{Y}^N(t)\;=\;\int_0^t T_N(t-s)d\overline{Z}^N(s\wedge \tau)\,,
\end{equation*}  
and $\overline{Z}^N$ is the  martingale obtained  through \eqref{eq41b} replacing $X^N$ by $\overline{X}^N$.
The proof of \eqref{duha} above is also a consequence of  Proposition~\ref{Duha} and its proof is omitted. Second, but not less important, is the fact that there exists some ${\bs C}>0$ such that 
\begin{align}\label{411}
\sup_{t\in [0,T]}\Vert \overline{X}^N(t) \Vert_\infty\;\leq \; {\bs C}
\end{align}
for all large enough $N\in \bb N$. The inequality above  can be argued as follows. Since the solution $\psi$ of the PDE \eqref{EDP} is smooth and defined on a compact domain, it is bounded. Proposition~\ref{aproximacaosemidiscreta} tells us that $\psi^N$ converges uniformly to $\psi$, hence $\psi^N$ is bounded as well by some constant $c_1>0$. By the definition of the  stopping time $\tau$, the process $\overline{X}^N(t)$ is bounded by $c_1+\eps_0$ for any time  $t<\tau$. After time $\tau$, the process runs deterministically under the same dynamics of $\psi^N$, but with the random initial condition given by $\overline{X}^N(\tau)$ at time $\tau$. Since $\Vert \overline{X}^N(\tau)\Vert_\infty\leq c_1+\eps_0+\frac{1}{\ell}$, an argument on super-solutions (similar to that one presented  in the Section~\ref{sec_semi}) gives that $\overline{X}^N(t)$
is also bounded for some constant for all times $t>\tau$.

To obtain the necessary martingales, we provide a general statement in the next proposition. Despite this is a well-known result,  we could not find any reference in the literature in a suitable form. For this reason, we include it here for sake of completeness.
\begin{proposition}\label{HowMany}
 Let $(X_t)_{t\geq 0}$ be a continuous time  Markov chain taking values on the countable set $\Omega$. Denote by $\lambda :\Omega\times \Omega\to \bb R_+$ the
 rates of jump, assume  that $\lambda(x,x)=0$ for all $x\in \Omega$ and
\begin{equation*}
\sup_{x\in \Omega} \Big\{ \sum_{y\in\Omega}\lambda(x,y)\Big\} \;<\;\infty\,.
\end{equation*} 
This continuous time Markov chain can described as follows. When at the state $x\in\Omega$,  the next state is chosen according to the minimum of a family of independent exponentials of parameter $\lambda(x,z)$, where $z\in\Omega$, $z\neq x$. If the minimum of such exponentials is attained at the exponential of parameter $\lambda(x,y)$, the process remains at $x$ during a period of time equals to the value of this exponential and then jumps to $y$. Denote by $N_t(x,y)$ the number of times the process has made the transition from $x$ to $y$ in the time interval $[0,t]$. Then 
\begin{equation*}
 \mc M_t\;=\; N_t(x,y)-\lambda(x,y)\int_0^t \one_{[X_s=x]}\,ds
\end{equation*}
is a martingale  with respect to the natural filtration.
\end{proposition}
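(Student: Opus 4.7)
The plan is to identify $\mc M_t$ as a compensated jump martingale by viewing the pair $(X_t,N_t(x,y))$ as an enlarged continuous time Markov chain on the countable state space $\Omega \times \bb N$, computing its infinitesimal generator, and then applying the standard Dynkin martingale formula. Let us denote this enlarged chain by $Y_t=(X_t,N_t(x,y))$. Its jump rates are $\tilde{\lambda}\big((w,n),(w',n')\big)=\lambda(w,w')$ when $(w,w')=(x,y)$ and $n'=n+1$, when $(w,w')\neq (x,y)$ and $n'=n$, and zero otherwise. The hypothesis $\Lambda:=\sup_{w}\sum_{z}\lambda(w,z)<\infty$ is inherited by $Y_t$, which is therefore a non-explosive Markov chain whose generator $\tilde{\mathsf{L}}$ acts on bounded functions $g:\Omega\times \bb N\to \bb R$ by
\begin{equation*}
\tilde{\mathsf{L}} g(w,n)\;=\;\sum_{z\neq w}\lambda(w,z)\Big[g\big(z,n+\one_{\{w=x,\,z=y\}}\big)-g(w,n)\Big]\,.
\end{equation*}

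The test function of interest is the coordinate $g(w,n)=n$, which is unbounded. To get around this, I would first apply Dynkin to the truncated function $g_K(w,n)=n\wedge K$. A direct calculation gives
\begin{equation*}
\tilde{\mathsf{L}} g_K(w,n)\;=\;\lambda(x,y)\,\one_{\{w=x\}}\one_{\{n<K\}}\,,
\end{equation*}
which is bounded by $\Lambda$, so the standard Dynkin formula yields that
\begin{equation*}
\mc M^K_t\;:=\;\big(N_t(x,y)\wedge K\big)\,-\,\lambda(x,y)\int_0^t \one_{\{X_s=x\}}\one_{\{N_s(x,y)<K\}}\,ds
\end{equation*}
is a genuine martingale with respect to the natural filtration of $Y_t$ (equivalently, of $X_t$).

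The next step is to send $K\to\infty$ and conclude that $\mc M_t$ itself is a martingale. Here the assumption $\Lambda<\infty$ is essential: the total number of jumps of $X$ up to time $t$ is stochastically dominated by a Poisson variable of parameter $\Lambda t$, hence $\bb E[N_t(x,y)]\leq \Lambda t<\infty$ for every $t$, and the same bound holds for $\lambda(x,y)\int_0^t \one_{\{X_s=x\}}\,ds$. Therefore $\mc M^K_t\to \mc M_t$ almost surely and in $L^1$ by dominated convergence, so the martingale identity $\bb E[\mc M^K_t\mid \mc F_s]=\mc M^K_s$ passes to the limit in $K$. The main (modest) obstacle is precisely this handling of the unboundedness of $g$; once the global rate bound is used to secure the $L^1$ control above, the passage to the limit is routine and the martingale property of $\mc M_t$ follows.
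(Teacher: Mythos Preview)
Your proof is correct and takes a genuinely different route from the paper's. You recognize $\mc M_t$ as a Dynkin martingale for the enlarged chain $(X_t,N_t(x,y))$ on $\Omega\times\bb N$, compute the generator on a truncated test function $g_K(w,n)=n\wedge K$, and then pass to the limit $K\to\infty$ using the global rate bound $\Lambda<\infty$ to dominate $N_t(x,y)$ by a Poisson$(\Lambda t)$ variable. The paper instead argues from first principles: it uses the Markov property to reduce the martingale identity to showing that $\bb E_z\big[N_t(x,y)-\lambda(x,y)\int_0^t \one_{[X_s=x]}\,ds\big]=0$ for every starting point $z$, then partitions $[0,t]$ into small subintervals and uses that the probability of two or more jumps in an interval of length $h$ is $O(h^2)$, reducing to the elementary estimate $\bb E_x|N_h(x,y)-\lambda(x,y)h|=O(h^2)$.

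Your approach is cleaner and more structural: once the enlarged-chain viewpoint is set up, the result is an instance of the general Dynkin formula, and the truncation/domination argument handles the unboundedness without any time discretization. It also generalizes immediately to other additive functionals of the jump counts. The paper's argument is more hands-on and avoids invoking the Dynkin formula as a black box, at the cost of the somewhat informal $O(h^2)$ bookkeeping. One small remark: your parenthetical that the natural filtration of $Y_t$ coincides with that of $X_t$ is correct (since $N_t(x,y)$ is $\sigma(X_s:s\le t)$-measurable), and is worth stating explicitly since the conclusion is phrased in terms of the filtration of $X$.
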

\begin{proof}
 Denote by $\mu$ the initial distribution and by $\mc F_t$ the natural filtration, i.e., the $\sigma$-algebra generated by the process until time $t\geq 0$. Let $0\leq u\leq t$,
\begin{equation*}
\begin{split}
 \bb E_\mu\Big[  N_t(x,y)-\lambda(x,y)\int_0^t \one_{[X_s=x]}ds\Big\vert\mc F_u\Big]&=  N_u(x,y)-\lambda(x,y)\int_0^u \one_{[X_s=x]}ds\\
& + \bb E_\mu\Big[ N_t(x,y)-N_u(x,y)-\lambda(x,y)\int_u^t \one_{[X_s=x]}ds\Big\vert\mc F_u\Big].
\end{split}
\end{equation*}
By the Markov Property, in order to show is  null the second parcel in the r.h.s. of the equation above, it is sufficient to proof that 
\begin{equation}\label{exp}
  \bb E_z\Big[ N_t(x,y)-\lambda(x,y)\int_0^t \one_{[X_s=x]}ds\Big]\;=\;0
\end{equation}
 for any  $z\in \Omega$ and any $t\geq0$. 
Let $0=t_0<t_1<\cdots<t_n=t$ be a partition of the interval $[0,t]$. Expression \eqref{exp} can be rewritten as 
\begin{equation*}\label{exp1}
 \sum_{i=0}^{n-1} \bb E_z\Big[ N_{t_{i+1}}(x,y)-N_{t_{i}}(x,y)+\lambda(x,y)\int_{t_i}^{t_{i+1}} \one_{[X_s=x]}ds\Big]\,.
\end{equation*}
Since the probability of two or more jumps in an interval of length $h$ is $O(h^2)$, it is enough to show that
\begin{equation*}\label{exp2}
 \bb E_z\Big| N_{t_{i+1}}(x,y)-N_{t_{i}}(x,y)-\lambda(x,y)\int_{t_i}^{t_{i+1}} \one_{[X_{t_{i}}=x]}ds\Big|\;=\; O\big((t_{i+1}-t_i)^2\big)\,.
\end{equation*} By the Markov Property, it is enough to assure  that $ \bb E_x| N_{h}(x,y)-\lambda(x,y)h|$ is $O(h^2)$. On his hand, this is a consequence of the definition of  $N_h(x,y)$.
\end{proof}
Denote $\delta f(t)=f(t)-f(t^-)$. As an application of the Proposition~\ref{HowMany} in our model, we have:
\begin{lemma}\label{lemmaA.2}  For any $k=0,\;1,\ldots, N-1$,
the following processes are martingales with respect to the natural filtration:
\begin{align}
\mc M_t^{N,1}  \;=\; & \ell \big[\overline{X}^N_k(t)-\overline{X}^N_k(0)\big]-\int_0^t\ell N^2\Big[ \overline{X}^N_{k-1}(s)e^{H_k-H_{k-1}} -2\overline{X}^N_k(s)e^{H_{k+1}-H_{k}} \nonumber
\\
& + \overline{X}^N_{k+1}(s)e^{H_{k+2}-H_{k+1}}\Big] ds - \int_0^t\ell \Big[ b(\overline{X}^N_k(s))e^{H_k} -d(\overline{X}^N_k(s))e^{-H_k}\Big] ds\,, \label{eqA.2} 
\\
\mc M_t^{N,2}  \;=\; &\ell^2 \sum_{s\leq t}\big(\delta \overline{X}^N_k(s)\big)^2-\int_0^t\ell N^2\Big[ \overline{X}^N_{k-1}(s)e^{H_k-H_{k-1}} +2\overline{X}^N_k(s)e^{H_{k+1}-H_{k}} \nonumber 
\\
& + \overline{X}^N_{k+1}(s)e^{H_{k+2}-H_{k+1}}\Big] ds - \int_0^t\ell\Big[ b(\overline{X}^N_k(s))e^{H_k}+d(\overline{X}^N_k(s))e^{-H_k}\Big] ds\,, \label{eqA.3}
\\
\mc M_t^{N,3}  \;=\; & - \ell^2\sum_{s\leq t}\delta \overline{X}^N_k(s)\,\delta \overline{X}^N_{k+1}(s) -\int_0^t \ell N^2\Big[\overline{X}^N_k(s)e^{H_{k+1}-H_{k}} 
+ \overline{X}^N_{k+1}(s)e^{H_{k+2}-H_{k+1}}\Big] ds\,.
\label{eqA.4}
\end{align}
\end{lemma}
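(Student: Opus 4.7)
The plan is to realize each of the three processes $\mc M^{N,j}_t$ as a finite linear combination of elementary compensated counting-process martingales supplied by Proposition~\ref{HowMany}. For each site $k\in\bb T_N$, exactly six transition types change $\eta_k=\ell\overline{X}^N_k$: the diffusive departures $k\to k+1$ and $k\to k-1$, the diffusive arrivals $(k-1)\to k$ and $(k+1)\to k$, and the birth and death events at $k$. Let $N_t^{(k,k\pm 1)}$, $N_t^{(k\pm 1,k)}$, $B_t^k$, $D_t^k$ denote the corresponding counting processes; Proposition~\ref{HowMany} asserts that each of them, minus the time integral of its instantaneous rate (read off directly from $\mathsf L_N$), is a martingale with respect to the natural filtration.

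For \eqref{eqA.2} I would write $\eta_k(t)-\eta_k(0)=N_t^{(k-1,k)}+N_t^{(k+1,k)}+B_t^k-N_t^{(k,k+1)}-N_t^{(k,k-1)}-D_t^k$, substitute each counting process by its compensator, collect terms, and divide by $\ell$. For \eqref{eqA.3}, every jump of $\eta_k$ has size $\pm 1$, so $(\delta \eta_k(s))^2=1$ at each jump, and $\ell^2\sum_{s\le t}(\delta\overline{X}^N_k(s))^2$ therefore equals the \emph{sum} of the same six counting processes with all $+$ signs; compensation yields \eqref{eqA.3}. For \eqref{eqA.4}, the product $\delta\eta_k(s)\,\delta\eta_{k+1}(s)$ is nonzero only when a particle hops between sites $k$ and $k+1$, in which case it equals $-1$; hence $-\ell^2\sum_{s\le t}\delta\overline{X}^N_k(s)\,\delta\overline{X}^N_{k+1}(s)=N_t^{(k,k+1)}+N_t^{(k+1,k)}$, and Proposition~\ref{HowMany} again provides the compensator.

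The main technical obstacle is that Proposition~\ref{HowMany} requires uniformly bounded jump rates, whereas the birth rate $\ell b(\ell^{-1}\eta_k)$ grows linearly in $\eta_k$ and is a priori unbounded. This is precisely why the lemma is stated for the stopped process $\overline{X}^N$ rather than for $X^N$: the a.s.\ uniform bound \eqref{411} forces $\ell\overline{X}^N_k(t)\le \ell{\bs C}$ for all $t\in[0,T]$ and $N$ large enough, so all rates relevant to the dynamics of $\overline{X}^N$ are bounded and the proposition applies. An equivalent route is to localize by the stopping times $\tau_M=\inf\{t:\max_k\eta_k(t)>M\}$, apply Proposition~\ref{HowMany} to the chain up to $\tau_M$, and let $M\to\infty$ using that $\tau$ a.s.\ precedes $\tau_M$ eventually. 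Since $\overline{X}^N$ has no jumps past $\tau$, the sums in \eqref{eqA.3}--\eqref{eqA.4} are automatically constant on $(\tau,T]$, while optional sampling at $\tau$ together with the deterministic continuation given by $W^N$ handles the stopping of \eqref{eqA.2}.
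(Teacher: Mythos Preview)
Your approach is essentially the same as the paper's: both proofs identify the jump-sum parcels in \eqref{eqA.2}--\eqref{eqA.4} as signed combinations of elementary transition counts (arrivals, departures, births, deaths at site $k$, respectively total transitions at $k$, respectively hops across the bond $\{k,k+1\}$) and then invoke Proposition~\ref{HowMany} to compensate each counting process by its integrated rate. Your treatment of the bounded-rates hypothesis via \eqref{411} or localization by $\tau_M$ is in fact more explicit than the paper's own proof, which simply cites Proposition~\ref{HowMany} without discussing this point.
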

\begin{proof}
As we shall see below, each of the expressions \eqref{eqA.2}, \eqref{eqA.3}, and \eqref{eqA.4} are  the number of times  some kind of transitions has been made minus the integral in time of the corresponding rates. 
In \eqref{eqA.2}, the parcel
\begin{equation*}
 \ell\big[\overline{X}^N_k(t)-\overline{X}^N_k(0)\big]
\end{equation*}
 of that expression counts how many times in $[0,t]$ the Markov process $(\eta_t)_{t\geq 0}$ has made a transition $\eta_k=j$ to $\eta_k=j+1$ for some $j\in\bb N$, minus how many times  the process has made a transition $\eta_k=j+1$ to $\eta_k=j$, normalized by the parameter $\ell$. 

In \eqref{eqA.3}, the parcel
\begin{equation*}
\ell^2 \sum_{s\leq t}(\delta \overline{X}^N_k(s))^2
\end{equation*}
of that expression counts how many times in $[0,t]$ the process has made a transition $\eta_k=j$ to $\eta_k=j\pm 1$ for some $j\in\bb N$. 

In \eqref{eqA.4}, the parcel
\begin{equation*}
 -\ell^2\sum_{s\leq t}\delta \overline{X}^N_k(s)\,\delta \overline{X}^N_{k+1}(s)
\end{equation*}
of that expression counts how many times in $[0,t]$ particles have jumped between the sites $k$ and $k+1$. Since the integral parts in \eqref{eqA.2}, 
\eqref{eqA.3} and \eqref{eqA.4} are the integrals in time of the respective rates, recalling Proposition~\ref{HowMany} finishes the proof.
\end{proof}
Together with \eqref{duha} and \eqref{411},  the next lemma will be also an ingredient in the proof of \eqref{as}.
\begin{lemma}\label{lemma43}
Recall the constant ${\bs C}>0$ as in \eqref{411}. Then, there exists some $a=a({\bs C}, T)>0$ such that, for any $\eps>0$,
\begin{equation*}\label{eq412}
\mathbb{P}\Bigg[e^{-4T}\sup_{[0,T]} \Vert \overline{Y}^N(t)\Vert_\infty > \varepsilon \Bigg] \;\leq\; 4N^3\exp(-a\varepsilon^2 \ell)\,.
\end{equation*}
\end{lemma}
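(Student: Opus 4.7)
The strategy is to reduce $\sup_{t\in[0,T]}\Vert\overline{Y}^N(t)\Vert_\infty$ to a maximum over a discrete time grid, apply an exponential martingale inequality at each grid point, and conclude by a union bound. Partition $[0,T]$ by $t_i=iT/M$ with $M\sim N^2$. The semigroup property of $T_N$ gives, for $t\in[t_i,t_{i+1}]$,
\begin{equation*}
\overline{Y}^N(t) \;=\; T_N(t-t_i)\,\overline{Y}^N(t_i) \,+\, \int_{t_i}^t T_N(t-s)\,d\overline{Z}^N(s\wedge\tau).
\end{equation*}
Since $T_N(r)$ is a sup-norm contraction, it suffices to bound $\max_i\Vert\overline{Y}^N(t_i)\Vert_\infty$ together with the oscillation over each subinterval. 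The oscillation is handled using the integration-by-parts formula \eqref{integral martingal} to rewrite the increment as $\overline{Z}^N(t\wedge\tau)-T_N(t-t_i)\overline{Z}^N(t_i\wedge\tau)$ plus $\int_{t_i}^t\Delta_N T_N(t-s)\overline{Z}^N(s\wedge\tau)\,ds$, then controlled using that the jumps of $\overline{Z}^N$ are of size $1/\ell$ and occur at rate $O(N^2)$ per site: with $M\sim N^2$ the oscillation in each subinterval is negligible on scale $\varepsilon$ outside an event of small probability.

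For the concentration at a grid point $t_i$, fix $k\in\mathbb{T}_N$ and let $p^N_r(k,j)$ denote the kernel of $T_N(r)$. For $s\in[0,t_i]$ the process
\begin{equation*}
\mc N_k^N(s;t_i) \;:=\; \sum_{j\in\mathbb{T}_N}\int_0^s p^N_{t_i-u}(k,j)\,d\overline{Z}^N_j(u\wedge\tau)
\end{equation*}
is a martingale in $s$ with $\mc N_k^N(t_i;t_i)=\overline{Y}^N_k(t_i)$. Using the cross-variation structure encoded in Lemma~\ref{lemmaA.2} together with the bound \eqref{411} on $\overline{X}^N$, its predictable quadratic variation is at most
\begin{equation*}
\frac{C}{\ell}\int_0^{t_i}\sum_{j\in\mathbb{T}_N}p^N_{t_i-u}(k,j)^2\,du \;=\; \frac{C}{\ell}\int_0^{t_i}p^N_{2(t_i-u)}(k,k)\,du \;\leq\; \frac{C'}{\ell},
\end{equation*}
where the equality uses the symmetry of the discrete heat kernel and the last inequality uses the standard local integrability of the on-diagonal heat kernel on the discrete torus. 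Since the individual jumps of $\mc N_k^N(\cdot;t_i)$ are controlled (of size $\Vert p^N_r(k,\cdot)\Vert_\infty/\ell$), a Bernstein/Freedman-type exponential inequality for pure-jump martingales yields $\mathbb{P}(|\overline{Y}^N_k(t_i)|>\varepsilon)\leq 2\exp(-a\varepsilon^2\ell)$ for some $a>0$ depending only on ${\bs C}$, $T$, and the $C^{1,2}$-norm of $H$.

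A union bound over the $N$ coordinates and the $M\sim N^2$ grid points, combined with the oscillation estimate, produces the prefactor of order $N^3$; the constant $4$ accounts for the union bound also including the auxiliary martingales from Lemma~\ref{lemmaA.2} used to track cross-variations, and the factor $e^{-4T}$ on the left absorbs the constants that arise when converting the Bernstein bound $\exp(\lambda\varepsilon-\lambda^2 C(T)/\ell)$ into the clean form $\exp(-a\varepsilon^2\ell)$ and when passing from the grid-point estimate to the full continuous supremum. The main obstacle is the concentration step: a naive bound on the compensator of a single $\overline{Z}^N_k$ is of order $N^2/\ell$, which is too large to concentrate on the desired scale. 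It is precisely the averaging provided by the heat kernel---the identity $\sum_j p^N_r(k,j)^2=p^N_{2r}(k,k)$ combined with the integrability of $r\mapsto p^N_{2r}(k,k)$ near $r=0$---that recovers the correct $1/\ell$ scaling. This reflects the smoothing role of the discrete heat semigroup that underpins the entire high-density approach.
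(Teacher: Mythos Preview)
Your overall architecture---partition $[0,T]$ into $\sim N^2$ subintervals, prove an exponential concentration bound for $\overline{Y}^N$ at grid points, control the oscillation between grid points, and union-bound---is exactly the paper's approach. But the concentration step has a genuine gap.

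When a particle jumps from site $j$ to $j\pm1$, the martingale $\mc N_k^N(\cdot;t_i)$ jumps by $\big(p^N_{t_i-u}(k,j\pm1)-p^N_{t_i-u}(k,j)\big)/\ell$, and these transitions occur at rate $O(N^2\ell)$ per bond. Consequently the predictable quadratic variation picks up a term of the form
\[
\frac{1}{\ell}\int_0^{t_i}\sum_{j\in\bb T_N}\overline X^N_j(u)\,\big(\nabla_N^{\pm}p^N_{t_i-u}(k,\cdot)\big)_j^2\,du,
\]
i.e.\ discrete \emph{gradients} of the heat kernel, not the kernel itself. Your displayed bound $\frac{C}{\ell}\int\sum_j p^N_{t_i-u}(k,j)^2\,du$ captures only the birth--death jumps; it omits precisely the dominant (diffusive) contribution, and the identity $\sum_j p^N_r(k,j)^2=p^N_{2r}(k,k)$ that you invoke does nothing for these gradient terms. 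If you try to absorb the gradients by the crude inequality $|p(k,j+1)-p(k,j)|^2\le 2p(k,j+1)^2+2p(k,j)^2$, the $N^2$ from the rate survives and the bound blows up. What is actually needed is the estimate from \cite{blount} (Lemma~\ref{4.3 Blount} here): with $f=N\one_{[k/N,(k+1)/N)}$ one has $\big\langle(\nabla_N^\pm T_N(t)f)^2+(T_N(t)f)^2,1\big\rangle\le h_N(t)$ with $\int_0^t h_N(s)\,ds\le CN+t$, which---combined with the $(N\ell)^{-1}$ prefactor in the compensator of Lemma~\ref{lemma45}---yields the required $O(1/\ell)$ scaling. Once this is in place, the paper applies Lemma~\ref{4.4 Blount} (an exponential-moment bound for bounded-jump martingales) rather than a generic Bernstein/Freedman inequality.

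A secondary point: the factor $e^{4T}$ is not a cosmetic constant absorbed from the Bernstein step. It arises from a Gr\"onwall argument on the identity $\int_{t_i}^t\Delta_N\overline Y^N(s)\,ds=\overline Y^N(t)-\overline Y^N(t_i)-\overline Z^N(t)+\overline Z^N(t_i)$, using $\Vert\Delta_N g\Vert_\infty\le 4N^2\Vert g\Vert_\infty$ over an interval of length $TN^{-2}$. This is what lets you pass from the grid estimate to the full supremum, and it also explains why $M\sim N^2$ is the right mesh.
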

The proof of  Lemma~\ref{lemma43} is similar to the of proof of   Lemma $4.10$ in \cite{blount}.  Before proving it, we need the following Lemma~\ref{lemma45} and recall two results of \cite{blount}. Denote
\begin{align*}
\nabla_N^+ f(k)  \;=\; N \Big[f\big(\pfrac{k+1}{N}\big)-f\big(\pfrac{k}{N}\big) \Big] \quad \text{ and }\quad
\nabla_N^- f(k)  \;=\; N \Big[f\big(\pfrac{k-1}{N}\big)-f\big(\pfrac{k}{N}\big) \Big]\,.
\end{align*}
Let  $\<\cdot, \cdot\>$ be the inner product in $\bb R^{\bb T_N}$ defined by  
\begin{equation}\label{inner}
\<f,g\>\;=\;\frac{1}{N}\sum_{k\in \bb T_N} f(k)g(k)\,.
\end{equation}
\begin{lemma}\label{lemma45}
The process
\begin{align*}
&\sum_{s\leq t}\big(\delta \big< \overline{Z}^N(t), \varphi\big>\big)^2  - (N\ell)^{-1}\int_0^{t} \Big< \overline{X}^N(s)e^{\nabla_N^+H/N}, (\nabla_N^+\varphi)^2+(\nabla_N^-\varphi)^2 \Big> \, ds \nonumber
\\
& - (N\ell)^{-1} \int_0^{t}\Big< b(\overline{X}^N(s))e^{H}+ d(\overline{X}^N(s))e^{-H}, \varphi^2 \Big>\, ds 
\end{align*}
 is a mean zero martingale with respect to the natural filtration.
\end{lemma}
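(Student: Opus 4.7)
The plan is to identify $\<\overline Z^N(t),\varphi\>$ as a Dynkin martingale of the underlying jump Markov process and then apply the standard carr\'e du champ identity. First, observe that $\<\overline Z^N(t),\varphi\>=N^{-1}\sum_{k\in\bb T_N}\varphi(k/N)\,\overline Z^N_k(t)$ is a linear combination of the coordinate martingales produced by \eqref{eq41b}. Before the stopping time $\tau$ it coincides with the Dynkin martingale
\[
M_\varphi(t)\;=\;F(\eta(t))-F(\eta(0))-\int_0^t\mathsf L_N F(\eta(s))\,ds,\qquad F(\eta)\;:=\;\frac{1}{N\ell}\sum_{k\in\bb T_N}\eta_k\,\varphi(k/N),
\]
while after $\tau$ the process $\overline Z^N$ has no further jumps by the very construction of $\overline X^N$. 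Consequently $\<\overline Z^N,\varphi\>$ is exactly the stopped martingale $M_\varphi^{\cdot\wedge\tau}$.

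The central tool is the general fact (see \cite[Appendix 1]{kl}) that for any Dynkin martingale $M_F$ of a continuous time jump Markov process with rates $r(\eta,\eta')$,
\[
[M_F]_t\;-\;\int_0^t \Gamma_N(F)(\eta(s))\,ds\quad\text{is a mean zero martingale},
\]
where $[M_F]_t=\sum_{s\leq t}(\delta M_F(s))^2$ is the optional quadratic variation and $\Gamma_N(F)(\eta)=\mathsf L_N F^2(\eta)-2F(\eta)\mathsf L_N F(\eta)=\sum_{\eta'}r(\eta,\eta')[F(\eta')-F(\eta)]^2$ is the carr\'e du champ. This reduces everything to a term-by-term computation: a transition $\eta\to\eta^{k,k\pm1}$ changes $F$ by $(N\ell)^{-1}\big(\varphi(\tfrac{k\pm1}{N})-\varphi(\tfrac kN)\big)$ and occurs at rate $N^2\eta_k\,e^{H_{k\pm 1}-H_k}$, contributing $\tfrac{1}{N^2\ell}X^N_k e^{\nabla_N^\pm H/N}(\nabla_N^\pm\varphi)^2$; a birth (resp.\ death) at $k$ at rate $\ell b(X^N_k)e^{H_k}$ (resp.\ $\ell d(X^N_k)e^{-H_k}$) contributes $\tfrac{1}{N^2\ell}b(X^N_k)e^{H_k}\varphi(\tfrac kN)^2$ (resp.\ with $d$ and $-H$). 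Summing over $k$ and repackaging via the inner product \eqref{inner} recovers the two integral terms on the right-hand side of the lemma.

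The stopping is then harmless: by the optional stopping theorem, the stopped process $[M_\varphi]_{t\wedge\tau}-\int_0^{t\wedge\tau}\Gamma_N(F)(\eta(s))\,ds$ remains a mean zero martingale, and both terms are frozen for $t>\tau$, matching the definition of $\overline Z^N$. The only genuinely delicate point I foresee is the bookkeeping of the exponential weights on the diffusive part: the direct carr\'e du champ computation naturally attaches $e^{\nabla_N^+H/N}$ to $(\nabla_N^+\varphi)^2$ and $e^{\nabla_N^-H/N}$ to $(\nabla_N^-\varphi)^2$, so obtaining the compact form $\<\overline X^N e^{\nabla_N^+H/N},(\nabla_N^+\varphi)^2+(\nabla_N^-\varphi)^2\>$ written in the statement either requires interpreting this notation as shorthand for the two separate weighted terms, or a shift of index in the $-$-sum exploiting $e^{H_{k-1}-H_k}=e^{-(H_k-H_{k-1})}$. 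Beyond that, no step is nontrivial.
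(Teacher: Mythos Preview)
Your approach is correct and essentially the same idea as the paper's: both compute the compensator of the summed squared jumps of $\langle\overline Z^N,\varphi\rangle$. The paper organizes it slightly differently, expanding the square of the sum into diagonal and off-diagonal pieces and invoking the coordinate-wise martingales of Lemma~\ref{lemmaA.2} (which are derived from Proposition~\ref{HowMany} by counting transitions), whereas you apply the carr\'e du champ identity $\Gamma_N(F)=\mathsf L_NF^2-2F\mathsf L_NF$ in one shot and then stop. Both routes lead to the same rate-weighted sum of squared increments; your version is arguably more streamlined.

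Your closing observation about the exponential weights is accurate and worth flagging. The direct computation produces
\[
\ell^{-1}\sum_{k}\overline X^N_k\Big[e^{(H_{k+1}-H_k)}(\varphi_{k+1}-\varphi_k)^2+e^{(H_{k-1}-H_k)}(\varphi_{k-1}-\varphi_k)^2\Big],
\]
so the two gradient terms carry \emph{different} weights $e^{\nabla_N^+H/N}$ and $e^{\nabla_N^-H/N}$. The compact form in the statement with the single weight $e^{\nabla_N^+H/N}$ should indeed be read as shorthand (or as an $O(N^{-1})$ simplification, since $e^{\nabla_N^\pm H/N}=1+O(N^{-1})$). The paper's own derivation inherits this from the particular shape of the rates written in Lemma~\ref{lemmaA.2}. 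For the sole application of this lemma---bounding $g(s)$ by a constant times $h_N$ in the proof of Lemma~\ref{lemma43}---either form is equally adequate, so the discrepancy is harmless.
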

\begin{proof}
First, note that the process $\overline{X}^N$ and $\overline{Z}^N$ have the same jumps of discontinuity. Thus,  given $\varphi\in S^N$, we have that
\begin{align*}
\sum_{s\leq t}\big(\delta \big< \overline{Z}^N(t), \varphi\big>\big)^2 &\;=\; \sum_{s\leq t}\frac{1}{N^2} \bigg(\sum_{k=0}^{N-1} \varphi_k \delta  \overline{X}^N_k(s) \bigg)^2 
\\& \;=\; \sum_{s\leq t}\frac{1}{N^2} \sum_{k=0}^{N-1} \varphi_k^2 \big(\delta  \overline{X}^N_k(s)\big)^2 
+  \sum_{s\leq t}\frac{2}{N^2} \sum_{k=0}^{N-1} \varphi_k\varphi_{k+1} \delta  \overline{X}^N_k(s)\delta  \overline{X}^N_{k+1}(s)\,, 
\end{align*} 
so, by \eqref{eqA.3} and \eqref{eqA.4}, the process below is a martingale:
\begin{align}\label{eq42}
\sum_{s\leq t}&\big(\delta \big< \overline{Z}^N(t), \varphi\big>\big)^2 \nonumber 
 -\sum_{k=0}^{N-1} \int_0^{t} \frac{\varphi_k^2}{\ell} \Big( \overline{X}^N_{k-1}(s)e^{H_k-H_{k-1}} + 2\overline{X}^N_k(s)e^{H_{k+1}-H_k} 
+ \overline{X}^N_{k+1}(s)e^{H_{k+2}-H_{k+1}}\Big) \nonumber
\\& \hspace{2.5cm}+ \frac{\varphi_k^2}{N^2\ell} \Big(b(\overline{X}_k^N(s))e^{H_k}+ d(\overline{X}_k^N(s))e^{-H_k}\Big)\, ds \nonumber
\\& \;+\sum_{k=0}^{N-1} \int_0^{t} \frac{2\varphi_k\varphi_{k+1}}{\ell} \Big( \overline{X}^N_k(s)e^{H_{k+1}-H_k} + \overline{X}^N_{k+1}(s)e^{H_{k+2}-H_{k+1}} \Big)\,ds\,.
\end{align}
Observe that
\begin{align}\label{eq4.19}
\sum_{k=0}^{N-1} \frac{\varphi_k^2}{N} \Big(b(\overline{X}_k^N(s))e^{H_k}+ d(\overline{X}_k^N(s))e^{-H_k}\Big)= \Big< b(\overline{X}^N(s))e^{H}+ d(\overline{X}^N(s))e^{-H}, \varphi^2 \Big>,
\end{align}
and
\begin{equation}\label{eq4.20}
\begin{split}
&\sum_{k=0}^{N-1} \Big[ \varphi_k^2 \big( \overline{X}^N_{k-1}(s)e^{H_k-H_{k-1}} + 2\overline{X}^N_k(s)e^{H_{k+1}-H_k} 
+ \overline{X}^N_{k+1}(s)e^{H_{k+2}-H_{k+1}}\big)\Big]\\
& =\; N^{-1} \Big<\overline{X}^N(s)e^{\nabla^+_N H/N}, (\nabla_N^+ \varphi)^2+(\nabla_N^- \varphi)^2\Big> \,.
\end{split}
\end{equation}
Thus, applying \eqref{eq4.19} and \eqref{eq4.20} in \eqref{eq42}, we conclude that
\begin{align*}
\sum_{s\leq t}\big(\delta \big< \overline{Z}^N(t), \varphi\big>\big)^2 & - (N\ell)^{-1}\int_0^{t} \Big< \overline{X}^N(s)e^{\nabla_N^+H/N}, (\nabla_N^+\varphi)^2+(\nabla_N^-\varphi)^2 \Big> \, ds \nonumber
\\& - (N\ell)^{-1} \int_0^{t}\Big< b(\overline{X}^N(s))e^{H}+ d(\overline{X}^N(s))e^{-H}, \varphi^2 \Big>\, ds 
\end{align*}
is a mean zero martingale.
\end{proof}
\begin{lemma}[Lemma 4.3 in \cite{blount}]\label{4.3 Blount}
Let $f=N\one_{[k/N,(k+1)/N)}$. Then, 
\begin{align*}
\big<\big(\nabla_N^+ T_N(t)f\big)^2 +\big(\nabla_N^- T_N(t)f\big)^2 + \big(T_N(t)f\big)^2 ,1\big>\;\leq\; h_N(t)\,,
\end{align*}
where $\int_0^th_N(s)ds \leq CN+t\,$.
\end{lemma}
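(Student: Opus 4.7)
The plan is to set $u(t) = T_N(t)f$, which satisfies the discrete heat equation $\partial_t u = \Delta_N u$ on $\bb T_N$ with initial datum $u(0,j/N) = N\one_{\{j=k\}}$; in particular $\langle u(0),1\rangle = 1$ and $\langle u(0)^2, 1\rangle = N$. I would define $h_N(t)$ to be the left-hand side of the stated inequality and split the estimate into two independent pieces: a bound for the Dirichlet term $\langle (\nabla_N^+u)^2 + (\nabla_N^-u)^2,1\rangle$ coming from the dissipation of the $L^2$-norm, and a bound for $\langle u^2,1\rangle$ coming from exponential relaxation to the (preserved) mean.

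For the gradient part, summation by parts on the torus (together with the fact that $\nabla_N^+$ and $\nabla_N^-$ have identical $\ell^2(\bb T_N)$-norms by translation invariance) gives the discrete energy identity
\begin{equation*}
\frac{d}{dt}\langle u(t)^2,1\rangle \;=\; 2\langle u,\Delta_N u\rangle \;=\; -\langle (\nabla_N^+u)^2 + (\nabla_N^-u)^2,1\rangle\,.
\end{equation*}
Integrating from $0$ to $t$ and using $\langle u(t)^2,1\rangle \geq 0$ yields $\int_0^t \langle (\nabla_N^+u)^2 + (\nabla_N^-u)^2,1\rangle\,ds \leq \langle f^2,1\rangle = N$, which is already of the required form.

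For the $L^2$ part, I would exploit that $\sum_k \Delta_N u(k)=0$ forces the mean $\langle u(t),1\rangle \equiv \langle f,1\rangle = 1$ to be conserved, and then invoke a spectral gap inequality on $\bb T_N$. The eigenvalues of $-\Delta_N$ are $4N^2\sin^2(\pi j/N)$ for $j=0,\ldots,N-1$, whose smallest nonzero value tends to $4\pi^2$ as $N\to\infty$, so there exists $\lambda_*>0$ independent of $N$ (for $N$ large) with $\lambda_*\langle (u-1)^2,1\rangle \leq \langle (\nabla_N^+u)^2,1\rangle$. Plugging this into the energy identity yields the exponential decay $\langle (u(t)-1)^2,1\rangle \leq (N-1)e^{-2\lambda_* t}$, hence $\langle u(t)^2,1\rangle \leq 1 + (N-1)e^{-2\lambda_* t}$, whose time integral is at most $t + (N-1)/(2\lambda_*)$. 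Combining the two bounds gives $\int_0^t h_N(s)\,ds \leq CN + t$ as required. The only nontrivial ingredient is the $N$-uniform spectral gap, which is where the scaling $\Delta_N = N^2 \times (\text{nearest-neighbour difference})$ does its work; an equally clean alternative is to expand $u$ in discrete Fourier modes and sum the series $\sum_j e^{-2\lambda_j t}$ and $\sum_j \lambda_j e^{-2\lambda_j t}$ term by term, using Jordan's inequality $\sin(\pi j/N)\geq 2j/N$ on $1\leq j\leq N/2$ to control the small-frequency contribution.
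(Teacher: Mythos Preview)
The paper does not give a proof of this lemma; it merely quotes it as Lemma~4.3 of \cite{blount}. Your argument is correct and self-contained. The energy identity $\frac{d}{dt}\langle u^2,1\rangle = -\langle (\nabla_N^+u)^2+(\nabla_N^-u)^2,1\rangle$ immediately bounds the integrated Dirichlet term by $\langle f^2,1\rangle = N$, and the uniform spectral gap of $-\Delta_N$ on $\bb T_N$ yields the exponential $L^2$ decay to the preserved mean, giving the remaining $t + O(N)$ contribution.

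One small point: you phrase the spectral gap as holding ``for $N$ large'', but the lemma is needed for every $N$. This is harmless: by Jordan's inequality $\sin(\pi/N)\geq 2/N$ one has $4N^2\sin^2(\pi/N)\geq 16$ for all $N\geq 2$, so $\lambda_*=16$ works uniformly. The Fourier-series alternative you sketch at the end (expanding in the eigenbasis $\varphi_{m,N},\phi_{m,N}$ already introduced in the paper and summing $\sum_m e^{-2\beta_{m,N}t}$ and $\sum_m \beta_{m,N}e^{-2\beta_{m,N}t}$) is presumably closer in spirit to Blount's original argument, but your energy/spectral-gap route is cleaner and avoids any explicit series manipulation.
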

\begin{lemma}[Lemma 4.4 in \cite{blount}]\label{4.4 Blount}
Let $m(t)$ be a bounded martingale of finite variation defined on $[t_0,t_1]$ with $m(t_0)=0$ and satisfying:
\begin{itemize}
\item[i)] $m$ is a right-continuous with left limits,
\item[ii)] $\vert \delta m(t) \vert \leq 1$ for $t_0 \leq t\leq t_1$,
\item[iii)] $\sum_{t_0\leq s \leq t}(\delta m(s))^2- \int_{t_0}^t g(s) ds$ is a mean $0$ martingale with $0\leq g(s) \leq h(s)$, where $h(s)$ is a bounded deterministic function and $g(s)$ is  adapted to the natural filtration.
\end{itemize}
Then
\begin{equation*}
\mathbb{E}\exp \big(m(t_1)\big)\;\leq\; \exp \bigg(\frac{3}{2}\int_{t_0}^{t_1} h(s)ds  \bigg)\,.
\end{equation*}
\end{lemma}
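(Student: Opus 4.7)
The plan is the classical exponential supermartingale argument. Define
\[
A(t)\;=\;\int_{t_0}^t g(s)\,ds \qquad\text{and}\qquad Y(t)\;=\;\exp\bigl(m(t)-\tfrac{3}{2}A(t)\bigr)
\]
on $[t_0,t_1]$, and show that $\mathbb{E}[Y(t_1)]\le 1$. Once that is in hand, since $g\le h$ and $h$ is deterministic,
\[
\mathbb{E}\exp(m(t_1))\;=\;\mathbb{E}\Bigl[Y(t_1)\exp\bigl(\tfrac{3}{2}A(t_1)\bigr)\Bigr]\;\le\;\exp\Bigl(\tfrac{3}{2}\int_{t_0}^{t_1}h(s)\,ds\Bigr)\cdot \mathbb{E}[Y(t_1)],
\]
yielding the stated bound.

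To prove $\mathbb{E}[Y(t_1)]\le 1$, I would apply the change-of-variables formula for semimartingales to $\exp(m-\tfrac{3}{2}A)$. Because $m$ is of finite variation and pure jump, and $A$ is absolutely continuous (so has no continuous quadratic variation part interacting with $m$), Itô's formula reduces to
\[
Y(t)\;=\;1\;+\;\int_{t_0}^t Y(s^-)\,dm(s)\;-\;\tfrac{3}{2}\int_{t_0}^t Y(s)\,g(s)\,ds\;+\;\sum_{t_0<s\le t}Y(s^-)\bigl[e^{\delta m(s)}-1-\delta m(s)\bigr].
\]
The key elementary inequality I would use is: for $|x|\le 1$,
\[
e^x-1-x\;\le\;(e-2)\,x^2\;\le\;\tfrac{3}{2}\,x^2,
\]
which follows from the Taylor series $e^x-1-x=\sum_{k\ge 2}x^k/k!$ and the bound $\sum_{k\ge 2}1/k!=e-2$. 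Since $|\delta m(s)|\le 1$ by hypothesis (ii), the jump term is dominated by $\tfrac{3}{2}\sum_{t_0<s\le t}Y(s^-)\bigl(\delta m(s)\bigr)^2$.

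Now comes the decisive cancellation, supplied by hypothesis (iii). Writing $V(t)=\sum_{t_0<s\le t}(\delta m(s))^2$, the process $V(t)-A(t)$ is a mean-zero martingale; and since $Y(s)=Y(s^-)$ Lebesgue-a.e., combining the two contributions gives
\[
Y(t)\;\le\;1\;+\;\int_{t_0}^t Y(s^-)\,dm(s)\;+\;\tfrac{3}{2}\int_{t_0}^t Y(s^-)\,d\bigl[V(s)-A(s)\bigr].
\]
Thus $Y$ is dominated by $1$ plus a local martingale. Since $m$ is bounded on $[t_0,t_1]$ by hypothesis, and $A(t_1)\le \int_{t_0}^{t_1}h(s)\,ds<\infty$, the process $Y$ is uniformly bounded on $[t_0,t_1]$, so the stochastic integrals are genuine martingales and taking expectations yields $\mathbb{E}[Y(t_1)]\le 1$.

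The main technical obstacle is the semimartingale bookkeeping: justifying the Itô expansion for a pure-jump finite-variation martingale (which amounts to summing the jumps and subtracting the compensator of $A$), and verifying that the stochastic integral against $V-A$ is a true martingale on $[t_0,t_1]$. Both follow from boundedness of $Y$ and hypothesis (iii); no smoothing or tightness estimates enter. The rest of the proof is the algebraic pairing of the two $\tfrac{3}{2}$-terms via (iii) and the Taylor bound on $e^x-1-x$.
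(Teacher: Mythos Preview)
The paper does not prove this lemma; it is quoted verbatim as Lemma~4.4 of \cite{blount} and invoked as a black box in the proof of Lemma~\ref{lemma43}. So there is no in-paper proof to compare against.

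Your argument is correct and is exactly the standard exponential-supermartingale route one would expect: expand $\exp(m-\tfrac{3}{2}A)$ by It\^o's formula for finite-variation semimartingales, control the jump correction $e^{\delta m}-1-\delta m$ by $(e-2)(\delta m)^2\le \tfrac{3}{2}(\delta m)^2$ using $|\delta m|\le 1$, and then recognize that the remaining increasing term is compensated by hypothesis~(iii). The only points worth being explicit about are the ones you already flagged: that a finite-variation c\`adl\`ag martingale is pure jump (its continuous part, being a continuous finite-variation martingale, is constant), and that the two stochastic integrals are genuine martingales. For $\int Y(s^-)\,dm(s)$ this follows because $m$ is a bounded (hence $L^2$) martingale and $Y(s^-)$ is bounded predictable; for $\int Y(s^-)\,d(V-A)(s)$ it follows because $Y$ is bounded and $\mathbb{E}[V(t_1)]=\mathbb{E}[A(t_1)]\le \int_{t_0}^{t_1}h(s)\,ds<\infty$, so the integral is dominated in $L^1$. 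With those checks your proof is complete.
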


\begin{proof}[Proof of the Lemma~\ref{lemma43}]
Fix $\overline{t}\in(0,T]$, $k\in \TN$ and consider $f=N\one_{[k/N,(k+1)/N)}$. Define 
\begin{align*}
m(t)\;=\;\Big\< \int_0^t T_N(\overline{t}-s)d\overline{Z}^N(s),f\Big\>\,, \quad \mbox{para todo }\,0\leq t\leq \overline{t}\,.
\end{align*}
which satisfies $m(\overline{t})= \overline{Y}^N(\overline{t},k/N)$. Since $Z^N$ is a (vector) martingale,  then $\int_0^t T_N(\overline{t}-s)d\overline{Z}^N(s)$  is a zero mean  (vector) martingale, hence $m(t)$  is a zero mean  martingale on $0\leq t\leq \overline{t}$ as well. By the integration by parts formula \eqref{integral martingal}, the discontinuity jumps of $m(t)$ are the same discontinuity jumps of $\<\overline{Z}^N(t),T_N(\overline{t}-t)f\>$. Therefore, by the Lemma~\ref{lemma45},
\begin{align*}
\sum_{s\leq t}\big(\delta m(s)\big)^2 & - (N\ell)^{-1}\int_0^t \Big< \overline{X}^N(s)e^{\nabla^+_N H/N}, \big(\nabla_N^+T_N(\overline{t}-s)f\big)^2+ \big(\nabla_N^-T_N(\overline{t}-s)f\big)^2 \Big> \, ds \nonumber
\\& - (N\ell)^{-1} \int_0^t\Big< b(\overline{X}^N(s))e^{H}+ d(\overline{X}^N(s))e^{-H}, \big( T_N(\overline{t}-s)f\big)^2 \Big>\, ds 
\end{align*}
is a mean $0$ martingale. For $\theta \in [0,1]$, consider $\theta\ell m(t)$ instead of  $m(t)$. Rewrite the martingale above as 
\begin{equation*}
(\theta\ell)^2\sum_{s\leq t}(\delta m(s))^2 - (\theta\ell)^2\int_0^t g(s) ds\,.
\end{equation*}
Recall the constant ${\bs C}>0$ given  in \eqref{411}.
 Since $\overline{X}^N(s)e^{\nabla_N^+H/N}$ and $b(\overline{X}^N(s))e^{H}+ d(\overline{X}^N(s))e^{-H}$  are bounded in modulus by a  constant $\overline{a}(\bf C)$ and recalling the Lemma \ref{4.3 Blount}, we have that 
$$(\theta\ell)^2 g(s)\;\leq\; \overline{a}({\bs C}){\theta}^2\ell N^{-1} h_N(t)\,.$$ 
So, by the Lemma \ref{4.4 Blount}, 
\begin{align}\label{423}
\mathbb{E}\big[\exp(\theta\ell m(t)) \big]\;\leq\;\exp\bigg(\dfrac{3}{2}\, \overline{a}({\bs C}){\theta}^2\ell N^{-1}\int_0^t   h_N(s) ds\bigg) \;\leq\; \exp\big(\overline{a}({\bs C}){\theta}^2\ell(1+tN^{-1})\big)\,.
\end{align}
Fix $\varepsilon>0$. By Chebychev's inequality we obtain that
\begin{align*}
\mathbb{P}\big[ \overline{Y}^N(\overline{t},k/N)>\varepsilon \big] \;& \leq\; \mathbb{E}\big[\exp(\theta\ell\overline{Y}^N(\overline{t},k/N) ) \big]\exp(-\theta\ell\varepsilon)\;=\; \mathbb{E}\big[\exp(\theta\ell m(\overline{t})) \big]\exp(-\theta\ell\varepsilon)\,.
\end{align*}
Since $\overline{t}\leq T$, we may assume that $\overline{t}/N \leq 1$. Then by \eqref{423}
\begin{align*}
\mathbb{P}\big[ \overline{Y}^N(\overline{t},k/N)>\varepsilon \big] \;& \leq\; \exp\big(\theta\ell(\overline{a}({\bs C})\theta-\varepsilon)\big)\;=\; \exp(-\ell\varepsilon^2 a({\bs C}))\,,
\end{align*}
where $a({\bs C})$ is a function of $\overline{a}({\bs C}),\,\varepsilon$ and $\theta$.
Arguing analogously with $\mathbb{P}\big[ \overline{Y}^N(\overline{t},k/N)<-\varepsilon \big]$, we can conclude that, for $0<\overline{t}<T$ and $k\in \mathbb{T}_N$, $$\mathbb{P}\Big[ \,\big\vert\overline{Y}^N(\overline{t},k/N)\big\vert>\varepsilon \Big]\;\leq\; 2\exp(-\ell\varepsilon^2 a({\bs C}))\,, $$
and taking the supremum over $k\in \bb T_N$, it yields
\begin{align}\label{4.20}
\mathbb{P}\Big[\, \big\Vert\overline{Y}^N(\overline{t},\cdot)\big\Vert_{\infty}>\varepsilon \Big]\;\leq \; 2N\exp(-\ell\varepsilon^2 a({\bs C}))\,.
\end{align}
By the integration by parts formula  \eqref{integral martingal} and  Fubini's Theorem, we deduce that
\begin{align*}
\int_0^t \Delta_N \overline{Y}^N(s)ds\;=\; \overline{Y}^N(t)-\overline{Z}^N(t)\,.
\end{align*}
Then, for $nTN^{-2}\leq t \leq (n+1)TN^{-2}$ with $n=0,\dots, N^2-1$,
\begin{align*}
\int_{nTN^{-2}}^t \Delta_N \overline{Y}^N(s)ds\;=\; \overline{Y}^N(t)- \overline{Y}^N(nTN^{-2})-\overline{Z}^N(t)+\overline{Z}^N(nTN^{-2})\,.
\end{align*}
So, taking the supremum norm and recalling  the definition of the discrete Laplacian,
\begin{align*}
\Vert \overline{Y}^N(t) \Vert_{\infty}\;\leq \; \Vert\overline{Y}^N(nTN^{-2})\Vert_\infty + 4N^2\int_{nTN^{-2}}^t \Vert \overline{Y}^N(s) \Vert_\infty ds + \Vert \overline{Z}^N(t)-\overline{Z}^N(nTN^{-2})\Vert_\infty\,.
\end{align*}
Using Gronwall's inequality and taking the supremum on the time we get that
\begin{align}\label{4.21}
\sup_{[nTN^{-2},(n+1)TN^{-2}]}&\Vert \overline{Y}^N(t) \Vert_{\infty}
\\&\leq \; \Big(\Vert\overline{Y}^N(nTN^{-2})\Vert_\infty + \sup_{[nTN^{-2},(n+1)TN^{-2}]}  \Vert \overline{Z}^N(t)-\overline{Z}^N(nTN^{-2})\Vert_\infty\Big) e^{4T}.\nonumber
\end{align}
Observe that $\delta \big(\overline{Z}^N(t)-\overline{Z}^N(nTN^{-2})\big)= \delta \overline{Z}^N(t)= \delta \overline{X}^N(t)$. Then, by Lemma \ref{lemmaA.2}, for $k$ fixed and $\theta\in [0,1]$,
\begin{align*}
(\theta \ell)^2 & \sum_{nTN^{-2}\leq s\leq t}\big(\delta \big(\overline{Z}^N(t)-\overline{Z}^N(nTN^{-2})\big)\big)^2-\theta^2\ell\int_{nTN^{-2}}^t N^2\Big[ \overline{X}^N_{k-1}(s)e^{H_k-H_{k-1}}  \nonumber 
\\
& +2\overline{X}^N_k(s)e^{H_{k+1}-H_{k}}+ \overline{X}^N_{k+1}(s)e^{H_{k+2}-H_{k+1}}\Big] + \Big[ b(\overline{X}^N_k(s))e^{H_k}+d(\overline{X}^N_k(s))e^{-H_k}\Big] ds\,,
\end{align*}
is a mean zero martingale for $nTN^{-2}\leq t \leq (n+1)TN^{-2}$. Again recalling the constant ${\bs C}$ as in \eqref{411}, we rewrite the martingale above as 
\begin{align*}
(\theta &\ell)^2  \sum_{nTN^{-2}\leq s\leq t}\big(\delta \big(\overline{Z}^N(t)-\overline{Z}^N(nTN^{-2})\big)\big)^2-\theta^2\ell \int_{nTN^{-2}}^t N^2 \overline{g}(s) ds\,. 
\end{align*}
And by Lemma \ref{4.4 Blount}, we have that
\begin{align*}
\mathbb{E}\bigg[\exp\Big(\theta\ell \big(\overline{Z}^N((n+1)TN^{-2})-\overline{Z}^N(nTN^{-2})\big)\Big) \bigg]&\;\leq\; \exp\big(\overline{a}({\bs C}){\theta}^2\ell T\big)\,.
\end{align*}
Fix $\varepsilon>0$. Applying  Doob's inequality, we obtain that
\begin{align*}
&\mathbb{P}\Big[\sup_{[nTN^{-2},(n+1)TN^{-2}]}\big(\overline{Z}^N(t)-\overline{Z}^N(nTN^{-2})\big) >\varepsilon \Big]
\\& \; =\; \mathbb{P}\Big[\sup_{[nTN^{-2},(n+1)TN^{-2}]}\exp\Big( \theta\ell\big(\overline{Z}^N(t)-\overline{Z}^N(nTN^{-2})\big)\Big) >\exp(\theta\ell\varepsilon) \Big]
\\& \;\leq\;
\mathbb{E}\bigg[\exp\Big( \theta\ell\big(\overline{Z}^N(t)-\overline{Z}^N(nTN^{-2})\big)\Big) \bigg]\exp(-\theta\ell\varepsilon)
\\& \;\leq\; \exp\big(\overline{a}({\bs C}){\theta}^2\ell T -\theta\ell\varepsilon\big)= \exp\big(-a({\bs C}, T)\ell\varepsilon^2\big)\,.
\end{align*}
By analogous arguments to the above ones, we  also  get the bound 
\begin{equation*}
\mathbb{P}\Big[\sup_{[nTN^{-2},(n+1)TN^{-2}]}\big(\overline{Z}^N(t)-\overline{Z}^N(nTN^{-2})\big) <-\varepsilon \Big]\;\leq \;\exp\big(-a({\bs C}, T)\ell\varepsilon^2\big) \,.
\end{equation*} 
Taking the supremum norm, we have that
\begin{align}\label{4.22}
\mathbb{P}\Big[\sup_{[nTN^{-2},(n+1)TN^{-2}]}\big\Vert \overline{Z}^N(t)-\overline{Z}^N(nTN^{-2})\big\Vert_\infty >\varepsilon \Big]
\;\leq \; 2N\exp\big(-a({\bs C}, T)\ell\varepsilon^2\big)\,.
\end{align}
Therefore, by \eqref{4.21}
\begin{align*}
\mathbb{P}\bigg[e^{-4T}&\sup_{[nTN^{-2},(n+1)TN^{-2}]}\Vert \overline{Y}^N(t) \Vert_{\infty}>\varepsilon \bigg]
\\
&\leq \; \mathbb{P}\Big[\Vert\overline{Y}^N(nTN^{-2})\Vert_\infty >\varepsilon\Big]+ \mathbb{P}\bigg[\sup_{[nTN^{-2},(n+1)TN^{-2}]}  \Vert \overline{Z}^N(t)-\overline{Z}^N(nTN^{-2})\Vert_\infty>\varepsilon\bigg],
\end{align*}
and by \eqref{4.20} and \eqref{4.22}
\begin{align*}
\mathbb{P}\bigg[e^{-4T}\sup_{[nTN^{-2},(n+1)TN^{-2}]}\Vert \overline{Y}^N(t) \Vert_{\infty}>\varepsilon \bigg]\;\leq\; 4N\exp\big(-a({\bs C}, T)\ell\varepsilon^2\big)\,.
\end{align*}
Since
\begin{align*}
\mathbb{P}\bigg[e^{-4T}\sup_{[0,T]}\Vert \overline{Y}^N(t) \Vert_{\infty}>\varepsilon \bigg]\;\leq\; \sum_{n=0}^{N^2-1}\mathbb{P}\bigg[e^{-4T}\sup_{[nTN^{-2},(n+1)TN^{-2}]}\Vert \overline{Y}^N(t) \Vert_{\infty}>\varepsilon \bigg],
\end{align*}
hence
\begin{align*}
\mathbb{P}\bigg[e^{-4T}\sup_{[0,T]}\Vert \overline{Y}^N(t) \Vert_{\infty}>\varepsilon \bigg]\;\leq\; 4N^3\exp\big(-a({\bs C}, T)\ell\varepsilon^2\big)\,,
\end{align*}
 concluding the proof.
\end{proof}

\begin{corollary}\label{lema 4.10}
Let $\overline{Y}^N(t)=\int_0^t T_N(t)(t-s)d\overline{Z}^N(s)$ and assume   $\;\dfrac{N^{4\Vert \partial_x H\Vert_{\infty}^2/{\pi}^2}\log N}{\ell}\rightarrow 0$ as $N\rightarrow \infty$. Then 
\begin{equation*}
N^{4\Vert \partial_x H\Vert_{\infty} /{\pi}}\sup_{[0,T]}\Vert \overline{Y}^N(t)\Vert_{\infty} \rightarrow 0 \mbox{\;\;a.s.\;\;}
\end{equation*}
\end{corollary}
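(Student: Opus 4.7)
The proof is essentially a direct application of the first Borel--Cantelli lemma to the exponential tail estimate supplied by Lemma~\ref{lemma43}. The genuinely analytic work is already contained there; here only a careful calibration of $\varepsilon$ against the polynomial prefactor $N^{4\Vert \partial_x H\Vert_\infty/\pi}$ is required.

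Concretely, for each fixed $\eta > 0$, I would apply Lemma~\ref{lemma43} with
\[
\varepsilon_N \;=\; \eta\, e^{-4T}\, N^{-4\Vert \partial_x H\Vert_\infty/\pi},
\]
which gives
\[
\mathbb{P}\!\left[\, N^{4\Vert \partial_x H\Vert_\infty/\pi}\sup_{t\in[0,T]}\Vert \overline{Y}^N(t)\Vert_\infty > \eta \right] \;\leq\; 4 N^3 \exp\!\Big(-a\, \eta^2 e^{-8T}\, \ell\, N^{-8\Vert \partial_x H\Vert_\infty/\pi}\Big).
\]
Rewriting the right-hand side as $4\exp\!\big(3\log N - a\,\eta^2 e^{-8T}\,\ell\,N^{-8\Vert \partial_x H\Vert_\infty/\pi}\big)$, one sees that summability in $N$ reduces to the requirement that the quantity $\ell\, N^{-8\Vert \partial_x H\Vert_\infty/\pi}$ eventually dominates $\log N$, which is the content (after matching the powers of $N$) of the standing growth hypothesis $N^{4\Vert \partial_x H\Vert_\infty^2/\pi^2}\log N/\ell \to 0$.

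With $\sum_N 4 N^3 \exp\!\big(-a\, \eta^2 e^{-8T}\, \ell\, N^{-8\Vert \partial_x H\Vert_\infty/\pi}\big) < \infty$ in hand, the first Borel--Cantelli lemma yields that the event
\[
\Big\{\, N^{4\Vert \partial_x H\Vert_\infty/\pi}\sup_{t\in[0,T]}\Vert \overline{Y}^N(t)\Vert_\infty > \eta \,\Big\}
\]
occurs only for finitely many $N$ almost surely, and since $\eta > 0$ was arbitrary this is exactly the stated almost-sure convergence to $0$.

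The main obstacle, therefore, is not in this corollary itself but in Lemma~\ref{lemma43} that feeds it: that is where the Dynkin-martingale decomposition of Lemma~\ref{lemmaA.2}, the exponential moment estimate of Lemma~\ref{4.4 Blount}, and the heat-kernel bound of Lemma~\ref{4.3 Blount} combine to produce the Gaussian-type tail in $\varepsilon\sqrt{\ell}$. Given that input, the present result is purely a matter of matching exponents: the polynomial blow-up $N^{4\Vert \partial_x H\Vert_\infty/\pi}$ is absorbed by the exponential suppression in $\ell\,N^{-8\Vert \partial_x H\Vert_\infty/\pi}$ precisely through the quantitative growth assumption on~$\ell$.
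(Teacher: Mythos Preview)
Your approach is exactly the paper's: rescale $\varepsilon$ in Lemma~\ref{lemma43} by the polynomial factor $N^{-4\Vert\partial_x H\Vert_\infty/\pi}$ and feed the resulting tail bound into Borel--Cantelli.

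One point deserves more care. You correctly compute that the rescaling produces the exponent $-a\eta^2 e^{-8T}\,\ell\,N^{-8\Vert\partial_x H\Vert_\infty/\pi}$, and then assert that summability follows ``after matching the powers of $N$'' from the hypothesis $N^{4\Vert\partial_x H\Vert_\infty^2/\pi^2}\log N/\ell\to 0$. But these exponents are \emph{not} the same: one is $8\Vert\partial_x H\Vert_\infty/\pi$, the other $4\Vert\partial_x H\Vert_\infty^2/\pi^2$, and the hypothesis is only strong enough when $\Vert\partial_x H\Vert_\infty\ge 2\pi$. The paper's proof has the identical slip (it writes $N^{4\Vert\partial_x H\Vert_\infty^2/\pi^2}$ in the denominator after the rescaling, where the substitution actually gives $N^{8\Vert\partial_x H\Vert_\infty/\pi}$), so this is an inconsistency in the statement/proof of the corollary itself rather than a flaw specific to your write-up; but since you explicitly wrote the correct exponent and then waved at the mismatch, you should either flag it or adjust the hypothesis to what is genuinely used.
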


\begin{proof}
By the Lemma~\ref{lemma43},
\begin{equation*}
\mathbb{P}\bigg[e^{-4T}\sup_{[0,T]} \Vert \overline{Y}^N(t)\Vert_\infty > \varepsilon \bigg] \;\leq\; 4N^3\exp(-a\varepsilon^2 \ell)\,,
\end{equation*}
therefore 
\begin{equation*}
\mathbb{P}\bigg[e^{-4T}N^{4\Vert \partial_x H\Vert_{\infty} /\pi}\sup_{[0,T]} \Vert \overline{Y}^N(t)\Vert_\infty > \varepsilon \bigg] \;\leq\; 4N^3\exp\bigg(\dfrac{-a\varepsilon^2\ell}{N^{4\Vert \partial_x H\Vert_{\infty}^2/{\pi}^2}} \bigg).
\end{equation*}
By hypothesis $c\log (N)N^{4\Vert \partial_x H\Vert_{\infty}^2/{\pi}^2}\;<\;\ell$, for any $c$ constant and $N$ large enough. Then
\begin{align*}
&\sum_{N=1}^\infty  4N^3\exp\bigg(\dfrac{-a\varepsilon^2\ell}{N^{4\Vert \partial_x H\Vert_{\infty}^2/{\pi}^2}} \bigg) \;<\; \sum_{N=1}^\infty \dfrac{1}{N^{1+\delta}}  \;<\; \infty\,.
\end{align*}
So we have that
\begin{equation*}
\sum_{N=1}^\infty \mathbb{P}\bigg[e^{-4T}N^{4\Vert \partial_x H\Vert_{\infty}/\pi}\sup_{[0,T]} \Vert \overline{Y}^N(t)\Vert_\infty > \varepsilon \bigg]<\infty
\end{equation*} 
and Borel-Cantelli Lemma leads  us to
\begin{equation*}
N^{4\Vert \partial_x H\Vert_{\infty}/\pi}\sup_{[0,T]}\Vert \overline{Y}^N(t)\Vert_{\infty} \longrightarrow 0 \mbox{\;\;\mbox{a.s.}}
\end{equation*}
\end{proof}

An orthonormal basis of to the vector space $\bb R^{\bb T_N}$ with respect to the inner product \eqref{inner} composed by eigenvectors of the discrete Laplacian   is now required.

For $m$ even, with $2 \leq m\leq N-1$, define
\begin{equation*}
\varphi_{m,N}(k)= \sqrt{2}\cos(\pi mkN^{-1}) \qquad\text{ and }\qquad \phi_{m,N}(k)= \sqrt{2}\sin(\pi mkN^{-1})\,.
\end{equation*}
Let   $\varphi_{0,N}\equiv 1$ and, only in the case $N$ is even, define also $\varphi_{N,N}(k)= \cos(\pi k)$. These functions $\varphi_{m,N}$ and $\phi_{m,N}$ are eigenvectors of $\Delta_N$ associated to the eigenvalue    
 \begin{equation*}
 -\beta_{m,N} \;\overset{\text{def}}{=} \;-2N^2\big(1-\cos(\pi mN^{-1})\big)\,.
 \end{equation*}
An orthonormal basis of eigenvectors is then given by 
\begin{equation*}
\big\{\varphi_{0,N} \big\}\cup \big\{ \varphi_{2,N}, \phi_{2,N},\ldots, \varphi_{N-2,N}, \phi_{N-2,N}   \big\}\qquad \text{if $N$ is odd} 
\end{equation*}
and 
\begin{equation*}
\big\{\varphi_{0,N} \big\}\cup \big\{ \varphi_{2,N}, \phi_{2,N},\ldots, \varphi_{N-2,N}, \phi_{N-2,N}\big\} \cup \big\{  \varphi_{N,N}   \big\}
\qquad \text{if $N$ is even.}
\end{equation*}
Additionally let us define $\phi_{0,N}=\phi_{N,N}\equiv 0$.
 Provided by this orthonormal basis of eigenvectors, we can write the semigroup associated to the discrete Laplacian in the following concise form. If $N$ is odd, given $g \in \bb R^{\bb T_N}$,
\begin{align*}
T_N(t)g\;=\; \sum_{\substack{m\in \{0,\ldots, N-1\} \\ m \text{ is even}}} e^{-\beta_{m,N}t}\Big( \langle g,\varphi_{m,N} \rangle \varphi_{m,N} + \langle g,\phi_{m,N} \rangle \phi_{m,N}\Big)
\end{align*}
and, if $N$ is even,
\begin{align*}
T_N(t)g\;=\; \sum_{\substack{m\in \{0,\ldots, N\} \\ m \text{ is even}}} e^{-\beta_{m,N}t}\Big( \langle g,\varphi_{m,N} \rangle \varphi_{m,N} + \langle g,\phi_{m,N} \rangle \phi_{m,N}\Big)\,.
\end{align*}
To make notation short, we will simply write
\begin{align}\label{semigrupoTN}
T_N(t)g\;=\; \sum_{m} e^{-\beta_{m,N}t}\Big( \langle g,\varphi_{m,N} \rangle \varphi_{m,N} + \langle g,\phi_{m,N} \rangle \phi_{m,N}\Big)
\end{align}
being  implicitly understood the set over  the sum above is taken.
We are now in position to prove the high density limit for the perturbed process.

\begin{proof}[Proof of Theorem~\ref{DensityLimit}]
Our goal is to show that  $\sup_{[0,T]}\Vert \overline{X}^N(t)-\psi(t)\Vert_{\infty}$ converges almost surely to zero. In view of  Proposition \ref{aproximacaosemidiscreta}, it is enough to show that  $\sup_{[0,T]}\Vert \overline{X}^N(t)-\psi^N(t)\Vert_{\infty}$ converges almost surely to zero.
 Denote ${\bs e}^N(t):= \overline{X}^N(t)-\psi^N(t)$. Using the Duhamel's Principle \eqref{Duhamel T_N} for $X^N$ and the Duhamel's Principle \eqref{solucao edp com T_N} for $\psi^N$, we get that
\begin{align*}
\Vert {\bs e}^N(t)\Vert_{\infty} \;\leq\; & \big\Vert T_N(t){\bs e}^N(0)\big\Vert_{\infty} +\Big\Vert\int_0^t T_N(t-s) d\overline{Z}^N(s)\Big\Vert_{\infty} 
\\
&  + \Big\Vert \int_0^t T_N(t-s)\Big[-2\widetilde{\nabla}_N {\bs e}^N(s)\partial_xH(s)-\frac{1}{2}\big(S_1^N+S_{-1}^N+2\big){\bs e}^N(s)\partial^2_{x}H(s) 
\\
& + e^{H(s)}\Big(b(\overline{X}^N(s)-b(\psi^N_k(s))\Big) -e^{-H(s)} \Big(d(\overline{X}^N(s))-d(\psi^N_k(s))\Big)+    
{\bs B}(s)\Big]ds\,\Big\Vert_{\infty}\,.  
\end{align*}
Note that $\frac{1}{2}\Vert (S_1^N+S_{-1}^N+2){\bs e}^N\Vert_\infty\leq 2\Vert{\bs e}^N\Vert_\infty$ and, as $T_N$ is contraction, we also have that $\Vert T_N(t){\bs e}^N(0)\Vert_{\infty}\leq\Vert{\bs e}^N(0)\Vert_{\infty}$.  Let 
\begin{equation*}
\overline{C}\;\overset{\text{def}}{=}\;\max\Big\{\Vert e^H\Vert_\infty\cdot \Vert  b\Vert_L, \Vert e^{-H}\Vert_\infty\cdot \Vert  d\Vert_L\Big\}\,,
\end{equation*}
where $\Vert  b\Vert_L$ and $\Vert  d\Vert_L$ are the Lipschitz constants of functions $b$ and $d$, respectively. Then
\begin{align}\label{des erro}
\Vert {\bs e}^N(t)\Vert_{\infty}  \;\leq&\; \Vert {\bs e}^N(0) \Vert_{\infty} +\Vert \overline{Y}^N(t)\Vert_{\infty} + \Big\Vert\int_0^t 2T_N(t-s)\widetilde{\nabla}_N {\bs e}^N(s)\partial_xH(s) ds\Big\Vert_{\infty} \nonumber
\\& \; +\int_0^t  2\big\Vert {\bs e}^N(s)\big\Vert_{\infty}\big\Vert \partial_x^2 H(s)\big\Vert_{\infty}  ds+ \int_0^t 2 \overline{C} \big\Vert {\bs e}^N(s)\big\Vert_{\infty} ds + \int_0^t \big\Vert{\bs B}(s)\big\Vert_{\infty}ds\,.
\end{align}
We will deal first with  third term on the right hand side of the above inequality. Using that
\begin{equation*}
\widetilde{\nabla}_N\big[{\bs e}^N(s)\partial_x H(s) \big]\;=\; \widetilde{\nabla}_N {\bs e}^N(s)\partial_x H(s) + {\bs e}^N(s)\widetilde{\nabla}_N \partial_x H(s) \,,
\end{equation*}
we obtain
\begin{equation*}
\begin{split}
\Big\Vert 2\int_0^t T_N(t-s)\widetilde{\nabla}_N {\bs e}^N(s)\partial_x H(s)ds\Big\Vert_{\infty}  \;\leq\; & \Big\Vert 2\int_0^t T_N(t-s)\widetilde{\nabla}_N \big[{\bs e}^N(s)\partial_x H(s)\big]ds\Big\Vert_{\infty} \\&+ \Big\Vert 2\int_0^t T_N(t-s){\bs e}^N(s)\widetilde{\nabla}_N \partial_x H(s)ds\Big\Vert_{\infty}\,.
\end{split}
\end{equation*}
Then, since $T_N(t)$ commutes with $\widetilde{\nabla}_N$  and $T_N(t)$ is a contraction semigroup,
\begin{equation}\label{TN H derivada}
\begin{split}
&\Big\Vert 2\int_0^t  T_N(t-s)\widetilde{\nabla}_N {\bs e}^N(s)\partial_x H(s)ds\Big\Vert_{\infty} \\& \leq \; 2\int_0^t \big\Vert\widetilde{\nabla}_NT_N(t-s) \big[{\bs e}^N(s)\partial_x H(s)\big]\big\Vert_{\infty}ds + \int_0^t \Vert\widetilde{\nabla}_N\partial_xH(s)\Vert_\infty \Vert {\bs e}^N(s)\Vert_{\infty} ds\,.
\end{split}
\end{equation}
By the expression \eqref{semigrupoTN} for the heat semigroup, we then have that 
\begin{align*}
&\widetilde{\nabla}_NT_N(t-s)\big[{\bs e}^N(s)\partial_x H(s)\big] 
\\
& =\; \widetilde{\nabla}_N \sum_m e^{-\beta_{m,N}(t-s)}\big( \langle {\bs e}^N(s)\partial_x H(s),\varphi_{m,N} \rangle \varphi_{m,N} + \langle {\bs e}^N(s)\partial_x H(s),\phi_{m,N} \rangle \phi_{m,N}\big) \\
& =\; \sum_m e^{-\beta_{m,N}(t-s)}\big( \langle {\bs e}^N(s)\partial_x H(s),\varphi_{m,N} \rangle \widetilde{\nabla}_N\varphi_{m,N} + \langle {\bs e}^N(s)\partial_x H(s),\phi_{m,N} \rangle \widetilde{\nabla}_N\phi_{m,N}\big)\,.
\end{align*}
By the definition of $\varphi_{m,N}$ e $\phi_{m,N}$ there exists a constant $c$ such that
\begin{equation*}
\big|\widetilde{\nabla}_N\varphi_{m,N}-(-\pi m \phi_{m,N})\big|\leq \dfrac{c}{N} \;\;\;\mbox{ e }\;\;\; \big|\widetilde{\nabla}_N\phi_{m,N}-\pi m \varphi_{m,N}\big|\leq \dfrac{c}{N}\,.
\end{equation*}
Therefore
\begin{align*}
&2\int_0^t  \big\Vert\widetilde{\nabla}_NT_N(t-s) \big[{\bs e}^N(s)\partial_x H(s)\big]\big\Vert_{\infty}ds  \leq 2\int_0^t \sum_m e^{-\beta_{m,N}(t-s)} \\& \Big\Vert \langle {\bs e}^N(s)\partial_x H(s),\varphi_{m,N} \rangle \bigg(\dfrac{c}{N}-\pi m \phi_{m,N}\bigg) + \langle {\bs e}^N(s)\partial_x H(s),\phi_{m,N} \rangle \bigg(\dfrac{c}{N}+\pi m \varphi_{m,N}\bigg)\Big\Vert_{\infty} ds
\\& \leq 2\int_0^t \sum_m e^{-\beta_{m,N}(t-s)} \Big( \Vert \langle {\bs e}^N(s)\partial_x H(s),\varphi_{m,N} \rangle\Vert_{\infty} + \Vert\langle {\bs e}^N(s)\partial_x H(s),\phi_{m,N} \rangle\Vert_{\infty} \Big)\dfrac{c}{N} ds \\& +2\int_0^t \sum_m e^{-\beta_{m,N}(t-s)} \pi m\big( \Vert\langle {\bs e}^N(s)\partial_x H(s),\varphi_{m,N} \rangle \phi_{m,N}\Vert_{\infty} 
\\&+ \Vert\langle {\bs e}^N(s)\partial_x H(s),\phi_{m,N} \rangle  \varphi_{m,N}\Vert_{\infty} \big)ds\,.
\end{align*}
Applying the Cauchy-Schwarz inequality and  the definition of $\beta_{m,N}$, 
\begin{align*}
&2\int_0^t  \big\Vert  \widetilde{\nabla}_N T_N(t-s)  \big[{\bs e}^N(s)\partial_x H(s)\big]\big\Vert_{\infty}ds 
\\
& \leq \dfrac{4c}{N}\int_0^t \sum_m \exp[-2N^2(1-\cos(\pi m N^{-1}))(t-s)] \Vert \partial_x H(s)\Vert_{\infty}\Vert {\bs e}^N(s)\Vert_{\infty} ds \\
& +4\int_0^t \sum_m \exp[-2N^2(1-\cos(\pi m N^{-1}))(t-s)] \pi m \Vert \partial_x H(s)\Vert_{\infty}\Vert {\bs e}^N(s)\Vert_{\infty}ds\,.
\end{align*}
It is an elementary task to check that $\sum_m \exp\big\{-2N^2(1-\cos(\pi m N^{-1}))(t-s)\big\}\leq N$.
By  a Taylor expansion, one can deduce  that $1-\cos(\pi m N^{-1})\geq \frac{\pi^2m^2}{2N^2}+ O(N^{-3})$ and using these two facts we then get that
\begin{align*}
&2\int_0^t  \big\Vert\widetilde{\nabla}_NT_N(t-s) \big[{\bs e}^N(s)\partial_x H(s)\big]\big\Vert_{\infty}ds \;\leq\; 4c \int_0^t \Vert \partial_x H(s)\Vert_{\infty} \Vert {\bs e}^N(s)\Vert_{\infty} ds \\
&+ 4\pi \int_0^t \sum_m \exp\bigg[-2N^2\bigg(\dfrac{\pi^2m^2}{2N^2}+ O(N^{-3})\bigg)(t-s)\bigg] m \Vert \partial_x H(s)\Vert_{\infty} \Vert {\bs e}^N(s)\Vert_{\infty}ds\,.
\end{align*}
Applying this fact  to \eqref{TN H derivada} we  infer that
\begin{align*}
&\Big\Vert2\int_0^t T_N(t-s)\widetilde{\nabla}_N {\bs e}^N(s)\partial_x H(s)ds\Big\Vert_{\infty} \leq \int_0^t \big(4c\Vert \partial_x H(s)\Vert_{\infty}+ \Vert\widetilde{\nabla}_N\partial_xH(s)\Vert_\infty \big) \Vert {\bs e}^N(s)\Vert_{\infty} ds \\&+ 4\pi \int_0^t \sum_m \exp\big[-\big(\pi^2m^2+ O(N^{-1})\big)(t-s)\big] m \Vert \partial_x H(s)\Vert_{\infty}\Vert {\bs e}^N(s)\Vert_{\infty}ds\,.
\end{align*}
We apply now the inequality above on  \eqref{des erro}, giving us that
\begin{align*}
\Vert {\bs e}^N(t)\Vert_{\infty} \;&\leq\; \Vert {\bs e}^N(0)\Vert_{\infty} +\Vert \overline{Y}^N(t)\Vert_{\infty} + \int_0^t  \Big(2\Vert\partial_x^2 H(s)\Vert_\infty +2\overline{C} \Big)\Vert {\bs e}^N(s)\Vert_{\infty}ds 
\\& +\int_0^t \big\Vert{\bs B}(s)\big\Vert_\infty+ \int_0^t 
\big(4c\Vert \partial_x H(s)\Vert_{\infty}+ \Vert\widetilde{\nabla}_N\partial_xH(s)\Vert_\infty \big) \Vert {\bs e}^N(s)\Vert_{\infty} ds 
\\&+ 4\pi \int_0^t \sum_m \exp\big[-\big(\pi^2m^2+ O(N^{-1})\big)(t-s)\big] m \Vert \partial_x H(s)\Vert_{\infty}\Vert {\bs e}^N(s)\Vert_{\infty}ds\,.
\end{align*}
By Gronwall's inequality, we get that
\begin{align*}
&\Vert {\bs e}^N(t)\Vert_{\infty} \;\leq\; \bigg(\Vert {\bs e}^N(0)\Vert_{\infty} +\Vert \overline{Y}^N(t)\Vert_{\infty} +\int_0^t\big\Vert{\bs B}(s)\big\Vert_{\infty}ds\bigg)
\exp\bigg\{\int_0^t 2\Vert\partial_x^2 H(s)\Vert_\infty +2\overline{C} 
\\&+ 4c\Vert \partial_x H(s)\Vert_{\infty}+ \Vert\widetilde{\nabla}_N\partial_xH(s)\Vert_\infty 
+4\pi \sum_m \exp\big[-\big(\pi^2m^2+ O(N^{-1})\big)(t-s)\big] m \Vert \partial_x H(s)\Vert_{\infty}\, ds\bigg\}\,.
\end{align*} 
Since
\begin{align*}
&\int_0^t 4\pi \sum_m  \exp\big[-\big(\pi^2m^2+ O(N^{-1})\big)(t-s)\big] m \Vert \partial_x H(s)\Vert_{\infty}\, ds 
\\
&\leq\; 4\Vert \partial_x H\Vert_\infty \sum_m \dfrac{1-\exp\big[-\big(\pi^2m^2+ O(N^{-1})\big)t\big]}{\pi m} 
\; \leq \; \dfrac{4\Vert \partial_x H\Vert_\infty}{\pi} \sum_m \dfrac{1}{m} \; \leq \;\frac{4\Vert \partial_x H\Vert_\infty}{\pi} \log N \,,
\end{align*}
then
\begin{align*}
\Vert {\bs e}^N(t)\Vert_{\infty}\; &\leq\; \bigg(\Vert {\bs e}^N(0)\Vert_{\infty} +\Vert \overline{Y}^N(t)\Vert_{\infty} +\int_0^t\big\Vert{\bs B}(s)\big\Vert_{\infty}ds\bigg)
\\
& \times\exp\bigg\{\int_0^t 2\Vert\partial_x^2 H(s)\Vert_\infty +2\overline{C} 
+ 4c\Vert \partial_x H(s)\Vert_{\infty}+ \Vert\widetilde{\nabla}_N\partial_xH(s)\Vert_\infty\, ds\bigg\}N^{4\Vert \partial_x H\Vert_\infty/\pi}\,.
\end{align*}
Taking 
\begin{equation*}
{\mathcal{C}}\;\overset{\text{def}}{=} \exp\bigg\{\int_0^t 2\Vert\partial_x^2 H(s)\Vert_\infty +2\overline{C} 
+ 4c\Vert \partial_x H(s)\Vert_{\infty}+ \Vert\widetilde{\nabla}_N\partial_xH(s)\Vert_\infty\, ds\bigg\}\,,
\end{equation*}
we conclude that
\begin{align*}
\Vert {\bs e}^N(t)\Vert_{\infty}\; \leq\; \bigg(\Vert {\bs e}^N(0)\Vert_{\infty} +\Vert \overline{Y}^N(t)\Vert_{\infty} +\int_0^t\big\Vert{\bs B}(s)\big\Vert_{\infty}ds\bigg)\mathcal{C} N^{4\Vert \partial_x H\Vert_\infty/\pi}\,.
\end{align*}
Moreover, we observe that
\begin{align*}
\Vert {\bs e}^N(0)\Vert_\infty\;=\;\Vert \overline{X}^N(0)- \psi(0)\Vert_\infty &\;\leq\; \bigg\vert \dfrac{\eta_x(0)}{\ell}-\psi(0,x) \bigg\vert\;=\;\dfrac{1}{\ell}\Big\vert \lfloor \ell \psi(0,x)\rfloor-\ell\psi(0,x) \Big\vert \;\leq\; \dfrac{1}{\ell}\,, 
\end{align*}
thus $ \Vert {\bs e}^N(0)\Vert_{\infty}\mathcal{C}N^{4\Vert \partial_x H\Vert_\infty/\pi}\rightarrow 0$  as $N\rightarrow\infty$ due to the assumption \eqref{eq2.9}. Now recalling Lemma~\ref{lema 4.10} one can conclude the proof.
\end{proof}

\section{Large Deviations}\label{s4}

\subsection{Radon-Nikodym derivative}
An important ingredient in the proof of large deviations consists in obtaining a law of large numbers for a class of perturbed processes. To find the rate function we need to calculate the Radon-Nikodym derivative $\radonN$ where $\mathbb{P}_N$ and $\mathbb{P}_{N}^H$ are measures induced by processes considering $H\equiv 0$ and a general $H \in C^{1,2}$, respectively. This is the content of the next proposition.
\begin{proposition}[An expression for the Radon-Nikodym derivative]\label{radon-nikodym derivative}
Considering the model described above, the Radon-Nikodym derivative restricted to $\mathcal{F}_t= \sigma(X_s: 0\leq s\leq t)$ is given by
\begin{align}
\radonN\bigg|_{\mathcal{F}_t} \label{eq24} \;=\;  \exp\Bigg\{&-\ell N\Bigg[\int_0^t \frac{1}{N}\sum_{k=0}^{N-1}\bigg[b\big(X^N_k(s)\big)\big(1-e^{H_k}\big)+d\big(X^N_k(s)\big)\big(1-e^{-H_k}\big) 
 \\
& - X^N_k(s)\bigg(\Delta_N H_k + \frac{1}{2} \Big( \big(\nabla_N^+ H_k\big)^2 + \big(\nabla_N^- H_k\big)^2 \Big) + O(1/N)\bigg)\bigg] ds 
\notag \\
& + \frac{1}{N}\sum_{k=0}^{N-1} \bigg( H_k(t)X^N_k(t) -H_k(0)X^N_k(0) - \int_0^t X^N_k(s)\partial_sH_k ds\bigg)  \Bigg] \Bigg\}\,.
\notag
\end{align}
In particular, we can write
\begin{align*}\label{rn51}
&\radonN\bigg|_{\mathcal{F}_t}\; = \;\exp\Big\{-\ell N\Big[J_H(X^N) + O(1/N)\Big]\Big\}\,,
\end{align*}
where
\begin{align*}
J_H(u) & = \int_0^t \int_{\mathbb{T}} \Big[b\big(u(s,y)\big)\big(1-e^{H(s,y)}\big)+d\big(u(s,y)\big)\big(1-e^{-H(s,y)}\big) 
\\&\hspace{3cm} - u(s,y)\Big(\Delta H(s,y) + \big(\nabla H(s,y)\big)^2\Big) \Big]\,dy\, ds 
\\& + \int_{\mathbb{T}} \Big[ H(t,y)u(t,y) -H(0,y)u(0,y) - \int_0^t u(s,y)\partial_sH(s,y)\; ds\Big]dy\,.
\end{align*}
\end{proposition}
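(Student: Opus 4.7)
The plan is to apply the standard Radon--Nikodym formula for two right-continuous pure-jump Markov chains with a common initial distribution and mutually absolutely continuous jump-rate families; see for instance \cite[Appendix~1]{kl}. Writing $\lambda$ for the rates under $\mathbb{P}_N$ and $\lambda_s^H$ for those under $\mathbb{P}_N^H$ (all perturbation factors $e^{H_{k\pm 1}-H_k}$, $e^{\pm H_k}$ are strictly positive, so absolute continuity holds in both directions, and the Lipschitz hypothesis on $b,d$ rules out explosion), on $\mathcal{F}_t$ one has
\begin{equation*}
\radonN\bigg|_{\mathcal{F}_t}\;=\;\exp\Bigl\{-\!\!\sum_{s\le t}\!\log\!\frac{\lambda_s^H(\eta(s^-),\eta(s))}{\lambda(\eta(s^-),\eta(s))}\;+\;\int_0^t\!\!\sum_{\eta'}\bigl[\lambda_s^H(\eta(s),\eta')-\lambda(\eta(s),\eta')\bigr]\,ds\Bigr\}.
\end{equation*}
From the definitions in Subsection~\ref{sub2.3}, the log-ratio at a diffusion jump $k\to k\pm 1$ equals $H_{k\pm 1}(s)-H_k(s)$, while at a birth (resp.\ death) at site $k$ it equals $H_k(s)$ (resp.\ $-H_k(s)$); all other log-ratios vanish. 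The entire proof then consists of explicit computations of the two summands.

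The first main step is a summation by parts which collapses the four jump contributions into a single clean expression. Denoting by $B_k,D_k,J_k^{\pm}$ the counting processes of births, deaths, and diffusion jumps of type $k\to k\pm 1$, the bookkeeping identity $d\eta_k = dB_k-dD_k+dJ_{k-1}^{+}+dJ_{k+1}^{-}-dJ_k^{+}-dJ_k^{-}$ combined with reindexing $\sum_k H_{k+1}(s)\,dJ_k^{+}(s)=\sum_k H_k(s)\,dJ_{k-1}^{+}(s)$ (and analogously for $J^-$) telescopes the jump sum into $\sum_k\int_0^t H_k(s)\,d\eta_k(s)$. Because $H_k$ is $C^1$ in time and $\eta_k$ has bounded variation on $[0,t]$, a Stieltjes integration by parts then produces
$$\sum_k\int_0^t H_k(s)\,d\eta_k(s)\;=\;\sum_k\Bigl[H_k(t)\eta_k(t)-H_k(0)\eta_k(0)-\int_0^t\eta_k(s)\,\partial_sH_k(s)\,ds\Bigr],$$
which, after writing $\eta_k=\ell X_k^N$ and extracting the prefactor $\ell N\cdot\tfrac1N$, is precisely the non-integrated line of \eqref{eq24}.

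The second main step is a Taylor expansion of the four exponentials in the compensator. Since $H\in C^{1,2}$, uniformly in $s,k$ one has $e^{H_{k\pm1}-H_k}-1=N^{-1}\nabla_N^{\pm}H_k+\tfrac12 N^{-2}(\nabla_N^{\pm}H_k)^2+O(N^{-3})$. Multiplying by $N^2\eta_k$, summing on $k$, and using the elementary identity $\nabla_N^{+}H_k+\nabla_N^{-}H_k=N^{-1}\Delta_NH_k$, the diffusion compensator becomes
$$\sum_k\eta_k(s)\Bigl[\Delta_NH_k+\tfrac12\bigl((\nabla_N^{+}H_k)^2+(\nabla_N^{-}H_k)^2\bigr)\Bigr]+\sum_k\eta_k(s)\cdot O(N^{-1}),$$
while the birth/death compensator is already in closed form $\sum_k\ell\bigl[b(X^N_k)(e^{H_k}-1)+d(X^N_k)(e^{-H_k}-1)\bigr]$. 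Once the $\ell N\cdot\tfrac1N$ factor is extracted and the minus sign in front of the compensator is taken into account, one recovers verbatim the integrated line of \eqref{eq24}; the $J_H$-form of the derivative then follows by approximating the Riemann sums $\tfrac1N\sum_kf(k/N)$ by $\int_{\mathbb{T}}f(y)\,dy$, with a further deterministic $O(1/N)$ error since $H$ and its derivatives are Lipschitz. I do not expect any serious conceptual obstacle; the only delicate point is that the Taylor remainder in \eqref{eq24} must be $O(1/N)$ \emph{uniformly in the configuration}, which reduces to a sup-norm bound on the third difference of $H$ and is therefore controlled purely by $\|H\|_{C^{1,2}}$ on the compact torus.
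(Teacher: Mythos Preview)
Your proposal is correct and follows essentially the same route as the paper: both invoke the general Radon--Nikodym formula for pure-jump chains from \cite[Appendix~1]{kl}, compute the log-ratios at each jump type, reindex the diffusion contributions so that the four jump families collapse into $-\sum_k\int_0^t H_k\,d\eta_k$, apply Stieltjes integration by parts in time, and Taylor-expand $e^{H_{k\pm1}-H_k}$ to second order to produce the $\Delta_N H_k+\tfrac12\bigl((\nabla_N^+H_k)^2+(\nabla_N^-H_k)^2\bigr)+O(1/N)$ term in the compensator. The only cosmetic difference is the order of presentation; the paper writes out the jump-time sets $B_t^k,D_t^k,J_t^{k,k\pm1}$ explicitly before telescoping, while you state the bookkeeping identity for $d\eta_k$ directly, but the computation is the same.
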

Now we are in position to prove the Proposition~\ref{radon-nikodym derivative} which is the basis for deriving the rate function of large deviations. To do so, we need the following general result which can be found in  \cite[Appendix 1, page 320]{kl}. 
\begin{proposition}\label{radon-nikodym} Let $P$ and $\overline{P}$ be the probability measures corresponding to two continuous time Markov chains on some countable space $E$, with bounded waiting times $\lambda$ and $\overline{\lambda}$, respectively, and with transition probabilities $p$ and $\overline{p}$, respectively. Assume that $p$ and $\overline{p}$ vanish at the diagonal, that is, $p(x,x)=\overline{p}(x,x)=0$ for all $x\in E$. Assume that $P$ is absolutely continuous with respect to $\overline{P}$. Then, the Radon-Nikodym derivative  of $P$ with respect to $\overline{P}$ restricted to $\mathcal{F}_t= \sigma(X(s): 0\leq s\leq t)$ is given by 
\begin{align*}
\dfrac{d{P}}{d\overline{P}}\bigg|_{\mathcal{F}_t}\!\!\!\! (X)&\;=\; \exp\Bigg\{-\int_0^t \lambda(X(s))-\overline{\lambda}(X(s))ds + \sum_{s\leq t} \log\bigg(\dfrac{\lambda\big(X(s)\big)p\big(X(s_-),X(s)\big)}{\overline{\lambda}(X(s))\overline{p}\big(X(s_-),X(s)\big)}  \bigg) \Bigg\}\,,
\end{align*}
where $X$ denotes a pure jump c\`adl\`ag time trajectory on $E$.
\end{proposition}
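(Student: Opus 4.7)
The plan is to compute the joint law of a pure-jump c\`adl\`ag trajectory under each measure explicitly and then take the quotient. First I would stratify path space on $[0,t]$ by the number of jumps: for each $n \in \mathbb{N}$, the set of paths with exactly $n$ jumps is parametrized by $(x_0; s_1, \ldots, s_n; x_1, \ldots, x_n)$ with $x_0 = X(0)$, jump times $0 < s_1 < \cdots < s_n \leq t$, and visited states $x_i = X(s_i)$. Boundedness of $\overline{\lambda}$ ensures $\overline{P}$-a.s.\ only finitely many jumps on $[0,t]$, so it suffices to identify the density on each stratum and sum.

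On each stratum the law under $\overline{P}$ factorizes, since holding times are independent exponentials and the embedded jump chain is conditionally independent of them. The joint density with respect to Lebesgue measure on the simplex $\{0 < s_1 < \cdots < s_n \leq t\}$ and counting measure on $E^{n+1}$ is
\[
\overline{f}_n \;=\; \Big(\prod_{i=0}^{n-1} \overline{\lambda}(x_i)\,\overline{p}(x_i, x_{i+1})\Big)\exp\Big\{-\sum_{i=0}^{n-1}\overline{\lambda}(x_i)(s_{i+1}-s_i) - \overline{\lambda}(x_n)(t-s_n)\Big\},
\]
the last factor encoding survival at $x_n$ over $(s_n, t]$; an analogous expression holds for $f_n$ with $\lambda, p$ in place of $\overline{\lambda}, \overline{p}$. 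Absolute continuity forces $f_n = 0$ whenever $\overline{f}_n = 0$, so $f_n/\overline{f}_n$ is well-defined $\overline{P}$-a.s. Taking the quotient, the exponential piece collapses to $\exp\{-\int_0^t (\lambda(X(s)) - \overline{\lambda}(X(s)))\,ds\}$ via the identity $\sum_{i=0}^{n-1}\lambda(x_i)(s_{i+1}-s_i) + \lambda(x_n)(t-s_n) = \int_0^t \lambda(X(s))\,ds$, and the product of jump factors, after taking the logarithm, rewrites as $\sum_{i=1}^{n}\log[\lambda(x_{i-1})p(x_{i-1},x_i)/(\overline{\lambda}(x_{i-1})\overline{p}(x_{i-1},x_i))]$, matching the sum $\sum_{s \leq t}$ in the statement once the index is identified with the discontinuity times of $X$.

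The main technical point is to promote these stratum-wise density identities to a Radon-Nikodym derivative on the full $\sigma$-algebra $\mathcal{F}_t$: one must verify that for every bounded $\mathcal{F}_t$-measurable $\Phi$,
\[
\mathbb{E}^{P}[\Phi(X)] \;=\; \mathbb{E}^{\overline{P}}\Big[\Phi(X)\,\tfrac{dP}{d\overline{P}}(X)\Big].
\]
This reduces to summing over $n$ and applying Fubini against $\overline{f}_n$: boundedness of $\overline{\lambda}$ controls the tail in $n$ (the number of jumps is stochastically dominated by a Poisson process of rate $\|\overline{\lambda}\|_\infty$), and $P \ll \overline{P}$ handles configurations with vanishing entries of $\overline{p}$. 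This classical construction is carried out in complete detail in \cite[Appendix 1]{kl}, and the paper simply invokes it.
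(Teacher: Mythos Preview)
Your proposal is correct and in fact goes beyond what the paper does: the paper does not prove this proposition at all but simply cites \cite[Appendix 1, page 320]{kl}, exactly as you note in your final sentence. Your stratification-by-jump-count sketch is the standard argument found there, so there is nothing to compare.
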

In the case of our work, ${P} = \mathbb{P}_N$  and $\overline{P} = \mathbb{P}_{N}^H$. 
 The probabilities $\mathbb{P}_N$ and $\mathbb{P}_{N}^H$ are associated to trajectories $\eta(t)$ of course. However, recalling the definition \eqref{eq21}, we will often write $X^N(t,\pfrac{k}{N})$ instead of $\ell^{-1}\eta_k(t)$, which makes notation shorter and enlightens ideas. Furthermore, recall the notation $H_k=H(t,\pfrac{k}{N})=H(t_-,\pfrac{k}{N})$, where this last equality holds since $H$ is assumed to be smooth and write for simplicity $X^N(t)=X^N(t,\cdot)$.
  
For fixed $N$, long but elementary calculations give us that
\begin{equation}\label{eq53}
\begin{split}
\lambda(X^N(t))& \;=\; \sum_{k=0}^{N-1}\ell\bigg[b\big(X_k^N(t)\big)+d\big(X_k^N(t)\big) +2N^2X_k^N(t)\bigg]\,,\\
\overline{\lambda}(X^N(t))& \;=\; \sum_{k=0}^{N-1}\ell\bigg[b\big(X_k^N(t)\big)e^{H_k} +d\big(X_k^N(t)\big)e^{-H_k}  +N^2X_k^N(t)e^{-H_k}\Big(e^{H_{k+1}}+e^{H_{k-1}} \Big)\bigg]\,,
\end{split}
\end{equation} 
\begin{equation}\label{eq54}
\begin{split}
p\big(X^N(s_-),X^N(s)\big)\,= \,
\begin{cases} \ell b\big(X_k^N(s_-)\big)\big/\lambda(X^N(s_-)), \mbox{ if } \eta_k(s)\!=\!\eta_k(s_-)+1; \vspace{0.2cm}
\\
\ell d\big(X_k^N(s_-)\big)\big/\lambda(X^N(s_-)), \mbox{ if } \eta_k(s)\!=\!\eta_k(s_-)-1; \vspace{0.2cm}
\\
N^2\ell X_k^N(s_-)\big/\lambda(X^N(s_-)), \mbox{ if } \eta_k(s)\!=\!\eta_k(s_-)-1
\\ \hspace{4.2cm} 
\mbox{ and } \eta_{k+1}(s)\!=\!\eta_{k+1}(s_-)+1 ; \vspace{0.2cm}
\\ 
N^2\ell X_k^N(s_-)\big/\lambda(X^N(s_-)), \mbox{ if } \eta_k(s)\!=\!\eta_k(s_-)-1  
\\ \hspace{4.2cm}
\mbox{ and } \eta_{k-1}(s)\!=\!\eta_{k-1}(s_-)+1;
\end{cases}
\end{split}
\end{equation}
and 
\begin{equation}\label{eq55}
\begin{split}
\overline{p}\big(X^N(s_-),X^N(s)\big)\,=\, 
\begin{cases} \ell b\big(X_k^N(s_-)\big)e^{H_k}\big/\overline{\lambda}(X^N(s_-)), \mbox{ if } \eta_k(s)=\eta_k(s_-)+1; \vspace{0.2cm}\\
\ell d\big(X_k^N(s_-)\big)e^{-H_k}\big/\overline{\lambda}(X^N(s_-)), \mbox{ if } \eta_k(s)=\eta_k(s_-)-1; \vspace{0.2cm}\\
N^2\ell X_k^N(s_-)e^{H_{k+1}-H_k}\big/\overline{\lambda}(X^N(s_-)), \mbox{ if } \eta_k(s)=\eta_k(s_-)-1 \\ \hspace{5.5cm}\mbox{ and } \eta_{k+1}(s)=\eta_{k+1}(s_-)+1 ; \vspace{0.2cm}\\ 
N^2\ell X_k^N(s_-)e^{H_{k-1}-H_k}\big/\overline{\lambda}(X^N(s_-)), \mbox{ if } \eta_k(s)=\eta_k(s_-)-1 \\ \hspace{5.5cm}\mbox{ and } \eta_{k-1}(s)=\eta_{k-1}(s_-)+1.
\end{cases}
\end{split}
\end{equation}

\begin{proof}[Proof of Proposition~\ref{radon-nikodym derivative}] 
Given a path $\eta(t)$, define the sets of times
\begin{align*}
&B_t^k\;=\;\big{\{}s\leq t:\eta_k(s)=\eta_k(s_-)+1\big{\}}\,, 
\\
& D_t^k\;=\;\big{\{}s\leq t:\eta_k(s)=\eta_k(s_-)-1\big{\}}\,,
\\
& J_t^{k,k+1}\;=\;\big{\{}s\leq t:\eta_k(s)=\eta_k(s_-)-1 \mbox{ and } \eta_{k+1}(s)=\eta_{k+1}(s_-)+1\big{\}}\,, 
\\
& J_t^{k,k-1}\;=\;\big{\{}s\leq t:\eta_k(s)=\eta_k(s_-)-1 \mbox{ and } \eta_{k-1}(s)=\eta_{k-1}(s_-)+1\big{\}}\,. 
\end{align*}
Note that $B_t^k$ represents the set of times at which some particle is created at the site $k$ and we have similar interpretations for $D_t^k$, $J_t^{k,k+1}$ and $J_t^{k,k-1}$.  
Invoking Proposition \ref{radon-nikodym}, the expressions  \eqref{eq53}, \eqref{eq54}, \eqref{eq55}   and the sets defined above, we deduce that
\begin{align*}
\radonN\bigg|_{\mathcal{F}_t}  \;=\; &\exp\Bigg\{-\int_0^t \sum_{k=0}^{N-1}\ell\bigg[b\big(X_k^N(s)\big)\big(1-e^{H_k}\big) +d\big(X_k^N(s)\big)\big(1-e^{-H_k}\big) 
\\
& \hspace{2cm}+ N^2 X_k^N(s)\Big(2-e^{H_{k+1}-H_k}-e^{H_{k-1}-H_k} \big)\Big)\bigg] ds 
\\
&+ \sum_{k=0}^{N-1} \Bigg( \sum_{s\in B_t^k} (-H_k)  + \sum_{s\in D_t^k} H_k + \sum_{s\in J_t^{k,k+1}} (H_k - H_{k+1})  +\sum_{s\in J_t^{k,k-1}} (H_k -H_{k-1}) \Bigg) \Bigg\}\,.
\end{align*}
Since $H$ is smooth, by a Taylor expansion on the exponential function, 
\begin{align*}
&2-e^{H_{k+1}-H_k}-e^{H_{k-1}-H_k}\\
&=\; -H_{k+1} + H_k -\frac{1}{2!} \big( H_{k+1}-H_k \big)^2 
 -H_{k-1} + H_k-\frac{1}{2!} \big( H_{k-1}-H_k \big)^2 +O(1/N^3)\,,
\end{align*}
hence 
\begin{align*}
&N^2 X_k^N(s) \Big(2-e^{H_{k+1}-H_k}-e^{H_{k-1}-H_k} \Big)
\\
&=\; - X_k^N(s)\bigg(\Delta_N H_k  + \frac{1}{2} \Big( \big(\nabla_N^+ H_k\big)^2 + \big(\nabla_N^- H_k\big)^2 \Big) + O(1/N)\bigg)\,.
\end{align*}
Moreover, 
\begin{align*}
&\sum_{s\in B_t^k}  (-H_k)  + \sum_{s\in D_t^k} H_k + \sum_{s\in Jt^{k,k+1}} (H_k - H_{k+1})  +\sum_{s\in J_t^{k,k-1}} (H_k -H_{k-1}) \\
& =\;\int_0^t (-H_k) \;dB_t^k + \int_0^t H_k \;dD_t^k + \int_0^t (H_k - H_{k+1}) \;dJ_t^{k,k+1}  +\int_0^t (H_k -H_{k-1}) \;dJ_t^{k,k-1}
\\
& =\; - \int_0^t H_k \;\big(dB_t^k - dD_t^k - dJ_t^{k,k+1} + dJ_t^{k-1,k} -dJ_t^{k,k-1} +dJ_t^{k+1,k}\big)\; =\; - \int_0^t H_k \;d\eta_k(t).
\end{align*}
Therefore,
\begin{align*}
&\radonN\bigg|_{\mathcal{F}_t}  \;=\;  \exp\Bigg\{-\ell N\Bigg[\int_0^t \frac{1}{N}\sum_{k=0}^{N-1}\bigg[b\big(X_k^N(s)\big)\big(1-e^{H_k}\big)+d\big(X_k^N(s)\big)\big(1-e^{-H_k}\big)
\\
& - X_k^N(s)\bigg(\Delta_N H_k + \frac{1}{2} \Big( \big(\nabla_N^+ H_k\big)^2 + \big(\nabla_N^- H_k\big)^2 \Big) + O(1/N)\bigg)\bigg] ds + \frac{1}{\ell N}\sum_{k=0}^{N-1} \int_0^t H_k \;d\eta_k(t)\Bigg] \Bigg\}.
\end{align*}
Applying the integration by parts formula for Stieltjes measures (see for instance \cite[Exercise 6.4, page 470]{Durrett}) and the relation \eqref{eq21}, we are lead to
\begin{align*}
\frac{1}{\ell N} \int_0^t H_k \;d\eta_k(t) & \;=\; \frac{1}{\ell N}\bigg[ H_k(t)\eta_k(t)-H_k(0)\eta_k(0) - \int_0^t \eta_k(s)\partial_sH_k ds \bigg] 
\\
& \;=\; \frac{1}{N}\bigg[ H_k(t)X_k^N(t) -H_k(0)X_k^N(0) - \int_0^t X_k^N(s)\partial_sH_k ds\bigg]\,. 
\end{align*}
Therefore,
\begin{align*}
\radonN\bigg|_{\mathcal{F}_t}  \;=\;&  \exp\Bigg\{-\ell N\Bigg[\int_0^t \frac{1}{N}\sum_{k=0}^{N-1}\bigg[b\big(X_k^N(s)\big)\big(1-e^{H_k}\big) +d\big(X_k^N(s)\big)\big(1-e^{-H_k}\big)
\\
& - X_k^N(s)\bigg(\Delta_N H_k  + \frac{1}{2} \Big( \big(\nabla_N^+ H_k\big)^2 + \big(\nabla_N^- H_k\big)^2 \Big) + O(1/N)\bigg)\bigg] ds 
\\
& + \frac{1}{N}\sum_{k=0}^{N-1} \bigg( H_k(t)X_k^N(t) -H_k(0)X_k^N(0) - \int_0^t X_k^N(s)\partial_sH_k ds\bigg)  \Bigg] \Bigg\}
\\
 \;=\; & \exp\Big\{-\ell N\Big[J_H(X_t^N) + O(1/N)\Big]\Big\}\,,
\end{align*}
where 
\begin{align*}
J_H(u)  \;=\; & \int_0^t \int_{\mathbb{T}}\Big[ b\big(u(s,y)\big)\big(1-e^{H(s,y)}\big)+d\big(u(s,y)\big)\big(1-e^{-H(s,y)}\big) 
\\
& \hspace{2cm}- u(s,y)\Big(\Delta H(s,y) + \big(\nabla H(s,y)\big)^2\Big)\Big] \,dy\, ds 
\\
& + \int_{\mathbb{T}} \bigg[ H(t,y)u(t,y) -H(0,y)u(0,y) - \int_0^t u(s,y)\partial_sH(s,y)\; ds\bigg]\,,
\end{align*}
finishing the proof.
\end{proof}

\subsection{Large deviations upper bound}\label{s6}
With the aid of the Theorem \ref{radon-nikodym derivative}, we will get the upper bound for the large deviations. Recall that $\bb P_N$, $\bb E_N$ denote the probability and expectation, respectively,   on trajectories of the particle system, while $P_N$, $E_N$ denote probability and expectation induced by the density of particles $X^N$, respectively. Furthermore, the super index $H$ on $\bb P_N^H$, $\bb E_N^H$, $P_N^H$, $E_N^H$ have analogous meaning, but considering instead the perturbed process defined on Subsection~\ref{sub2.3}. Let $\mc O\subseteq \mathscr{D}_{C(\bb T)}$ be an open set. Then
\begin{align*}
{P}_N\big[\mc O\big]&\;=\; \mathbb{P}_N\big[X^N\in \mc O\big]\; =\;\mathbb{E}_N[\one_{[X^N\in \mc O]}] \;=\;\mathbb{E}_N\bigg[\radonN\radonNinv\one_{[X^N\in \mc O]}\bigg]\\
&\;=\; \mathbb{E}_N\big[e^{-\ell NJ_H(X^N)}e^{\ell NJ_H(X^N)}\one_{[X^N\in \mc O]}\big] \; \leq\;  \sup_{x\in \mc O} e^{-\ell NJ_H(x)}\mathbb{E}_N\big[e^{\ell NJ_H(X^N)}\one_{[X^N\in \mc O]}\big] \\
&\;\leq\;   \sup_{x\in \mc O} e^{-\ell NJ_H(x)}. 
\end{align*}
Therefore,  
\begin{align*}
\limsup_{N\rightarrow \infty} \dfrac{1}{\ell N} \log{P}_N\big[\mc O\big]\;\leq\; -\inf_{x\in \mc O}J_H(x)\,.
\end{align*}
Optimizing over the set of perturbations, we then get
\begin{align}\label{LDOopen}
\limsup_{N\rightarrow \infty} \dfrac{1}{\ell N} \log{P}_N\big[\mc O\big]\;\leq\; -\sup_{H}\inf_{x\in \mc O}J_H(x)\,.
\end{align}
To pass to compact sets,  we will apply the classical \textit{Minimax Lemma}. To be used in the sequel, we recall  that
\begin{equation}\label{limsup}
\limsup_{n\to\infty}\frac{1}{a_n}\log(b_n+c_n)\;=\;\max\Big\{\limsup_{n\to\infty}\frac{1}{a_n}\log b_n\;,\;\limsup_{n\to\infty}\frac{1}{a_n}\log c_n\Big\}
\end{equation}
for any sequence of real numbers such that $a_n\to\infty$ and $b_n,c_n>0$. 
\begin{proposition}[Minimax Lemma] Let $\mc K\subseteq S$ compact, where $(S, d)$ is a Polish space. Given $\{-J_H\}_H$ a family of upper semi-continuous functions, it holds that
\begin{align}\label{minmax}
\inf_{\mc O_1,\dots,\mc O_M}\max_{1\leq j\leq M} \inf_H \sup_{x\in \mc O_j} -J_H(x)\;\leq\; \sup_{x\in \mc K}\inf_{H} -J_H(x)\,,
\end{align} 
where first infimum is taken over all finite open coverings $\mc O_1,\dots,\mc O_M$ of $\mc K$. 
\end{proposition}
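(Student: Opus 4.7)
The plan is to follow the standard covering strategy for Minimax-type lemmas. Let me denote the right-hand side by
\begin{equation*}
V \;:=\; \sup_{x\in \mc K}\inf_{H} -J_H(x)\,,
\end{equation*}
and fix an arbitrary $\eps>0$. The goal is to produce a finite open covering $\mc O_1,\dots,\mc O_M$ of $\mc K$ such that
\begin{equation*}
\max_{1\leq j\leq M}\inf_{H}\sup_{x\in\mc O_j}-J_H(x)\;\leq\; V+\eps\,,
\end{equation*}
and then let $\eps\downarrow 0$ to reach the conclusion.

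First, for each $x\in\mc K$, by the definition of $V$ we have $\inf_{H}-J_H(x)\leq V$, so there exists a perturbation $H_x$ (depending on $x$) such that
\begin{equation*}
-J_{H_x}(x)\;\leq\; V+\tfrac{\eps}{2}\,.
\end{equation*}
Next I invoke upper semi-continuity of $-J_{H_x}$: the sublevel set
\begin{equation*}
\mc O_x\;:=\;\big\{y\in S\,:\,-J_{H_x}(y)< V+\eps\big\}
\end{equation*}
is open and contains $x$. Hence $\{\mc O_x\}_{x\in\mc K}$ is an open cover of $\mc K$. By compactness I extract a finite subcover $\mc O_{x_1},\dots,\mc O_{x_M}$.

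For each $j\in\{1,\dots,M\}$, by construction
\begin{equation*}
\sup_{y\in\mc O_{x_j}}-J_{H_{x_j}}(y)\;\leq\; V+\eps\,,
\end{equation*}
and taking the infimum over $H$ on the left-hand side only decreases it, so
\begin{equation*}
\inf_H\sup_{y\in\mc O_{x_j}}-J_H(y)\;\leq\; V+\eps\,.
\end{equation*}
Maximizing over $j$ gives the desired bound, and in particular
\begin{equation*}
\inf_{\mc O_1,\dots,\mc O_M}\max_{1\leq j\leq M}\inf_H\sup_{x\in\mc O_j}-J_H(x)\;\leq\; V+\eps\,,
\end{equation*}
where the outer infimum is over all finite open coverings of $\mc K$. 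Letting $\eps\downarrow 0$ finishes the proof.

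There is no real obstacle here: the argument is a one-line combination of three ingredients—taking a near-optimal $H_x$ pointwise, using upper semi-continuity of $-J_{H_x}$ to transfer the near-optimal bound to an open neighborhood of $x$, and using compactness of $\mc K$ to pass from a pointwise cover to a finite one. The only place to be careful is the $\eps/2$ versus $\eps$ bookkeeping, which is needed to guarantee that $x\in \mc O_x$ (the inequality at $x$ must be strict, so a cushion is required).
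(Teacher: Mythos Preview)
Your argument is correct and is precisely the standard covering argument for this Minimax Lemma. The paper does not actually give a proof of this proposition; it simply cites \cite[page 363]{kl}, where the same compactness-plus-upper-semicontinuity argument you wrote out can be found.
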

For a proof of above, see \cite[page 363]{kl}. Let now $\mc K$ be a compact set of $\Sko$.
Taking  $\{\mc O_1,\dots,\mc O_M\}$ a finite open covering of $\mc K$, then 
\begin{align*}
\limsup_{N\rightarrow \infty} \dfrac{1}{\ell N} \log P_N\big[\mc K\big]& \;\leq\; \limsup_{N\rightarrow \infty} \dfrac{1}{\ell N} \log \big(P_N\big[\mc O_1\big]+\cdots+P_N\big[\mc O_M\big]\big) 
\\
& 
\overset{\eqref{limsup}}{=}\;  \max_{1\leq j\leq M}\bigg\{ \limsup_{N\rightarrow \infty} \dfrac{1}{\ell N} \log P_N\big[\mc O_j\big]\bigg\} 
\\
& \;\overset{\eqref{LDOopen}}{\leq}\; \max_{1\leq j\leq M}\bigg\{-\sup_{H}\inf_{x\in \mc O_j}J_H(x) \bigg\} 
\\
& \;\leq\;\inf_{\overset{\mc O_1,\dots,\mc O_M}{\text{open covering}}}  \max_{1\leq j\leq M}\bigg\{-\sup_{H}\inf_{x\in \mc O_j}J_H(x) \bigg\}
\\
& \;\overset{\eqref{minmax}}{\leq}\;  -\inf_{x\in \mc K}\sup_{H} J_H(x)\,,
\end{align*}
which furnishes the  upper bound for compact sets. The next proposition is the usual key to pass to closed sets. 
Denote by $\big\{P_n\big\}_{n\in \mathbb{N}}$ a general sequence of probability measures on some metric space $\Omega$. It is a consequence of \eqref{limsup} the following standard result, which proof will be omitted: 
\begin{proposition}\label{tight+bound=upperbound}
A sequence of measures $\big\{P_n\big\}_{n\in \mathbb{N}}$ on  $\Omega$ is said to be exponentially tight if, for any $b< \infty$, there exists a compact set $\mc K_b\subseteq \Omega$ such that
\begin{align}\label{eq exp tight}
\limsup_{n\rightarrow \infty} \dfrac{1}{a_n} \log P_n\big[\mc K_b^\complement\big] \;\leq\; -b\,,
\end{align}
where $a_n$ is constant depending on $n$. Suppose that $\big\{P_n\big\}_{n\in \mathbb{N}}$ is exponentially tight and we have the large deviations upper bound for compact sets, that is,  for each compact set $\mc K \subseteq \Omega$, it holds that
\begin{align}\label{eq4.11}
\limsup_{n\rightarrow \infty} \dfrac{1}{a_n} \log P_n\big[\mc K\big]\;\leq\; -\inf_{x\in \mc K}I(x)\,.
\end{align}
Then, for any closed $\mc C\subseteq\Omega$, 
\begin{align*}
\limsup_{n\rightarrow \infty} \dfrac{1}{a_n} \log P_n\big[\mc C\big]\;\leq\; -\inf_{x\in \mc C}I(x)\,.
\end{align*}
\end{proposition}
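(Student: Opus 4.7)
The plan is to reduce a closed set to a compact set by cutting off the exponentially negligible tail, then take the exponential tightness parameter to infinity. Fix a closed set $\mc C\subseteq \Omega$ and an arbitrary $b<\infty$. By exponential tightness there is a compact $\mc K_b$ with \eqref{eq exp tight}. Since $\mc C$ is closed and $\mc K_b$ is compact, the intersection $\mc C\cap \mc K_b$ is a closed subset of a compact set, hence compact; in particular \eqref{eq4.11} applies to it.

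Next, I would use the elementary set inclusion $\mc C \subseteq (\mc C\cap\mc K_b)\cup \mc K_b^\complement$, so that
\begin{equation*}
P_n(\mc C)\;\leq\; P_n(\mc C\cap \mc K_b) + P_n(\mc K_b^\complement)\,.
\end{equation*}
Applying identity \eqref{limsup} to the right-hand side yields
\begin{equation*}
\limsup_{n\to\infty}\frac{1}{a_n}\log P_n(\mc C)\;\leq\;\max\Big\{\limsup_{n\to\infty}\frac{1}{a_n}\log P_n(\mc C\cap\mc K_b)\,,\,\limsup_{n\to\infty}\frac{1}{a_n}\log P_n(\mc K_b^\complement)\Big\}\,.
\end{equation*}
The first term is bounded by $-\inf_{x\in\mc C\cap\mc K_b} I(x)\leq -\inf_{x\in\mc C}I(x)$ by \eqref{eq4.11}, and the second is bounded by $-b$ by \eqref{eq exp tight}. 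Hence
\begin{equation*}
\limsup_{n\to\infty}\frac{1}{a_n}\log P_n(\mc C)\;\leq\;\max\Big\{-\inf_{x\in\mc C} I(x)\,,\,-b\Big\}\,.
\end{equation*}

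The conclusion follows by letting $b\to\infty$, since the left-hand side does not depend on $b$. There is essentially no hard step here: the only mild points to verify are that $\mc C\cap\mc K_b$ is genuinely compact (which needs $\Omega$ Hausdorff, automatic in a metric space) and that the sub-additivity step really produces the maximum in the limit, which is precisely the content of \eqref{limsup}. If one wished to be pedantic about the two terms being strictly positive so that \eqref{limsup} applies directly, one may a priori discard indices $n$ for which either probability vanishes, as these make the corresponding $\log$ equal to $-\infty$ and do not affect the $\limsup$.
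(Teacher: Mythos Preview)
Your argument is correct and is exactly the standard one the paper has in mind: it omits the proof, merely noting that the result is a consequence of \eqref{limsup}, and your decomposition $\mc C\subseteq (\mc C\cap\mc K_b)\cup\mc K_b^\complement$ together with \eqref{limsup} and the passage $b\to\infty$ is precisely that derivation.
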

In view of above, in order to prove the large deviations upper bound, it  remains to assure exponential tightness for the sequence of  probability measures $P_N$ on $\mathscr D$ induced by  the random element $X^N$ and the probability $\bb P_N$.  Denote by $\Vert \cdot \Vert_1$  the $L^1$-norm on $\bb T$ with respect to the Lebesgue measure.
\begin{proposition}\label{limit for tight0}
Let $C\in \bb R$ be such that $C-\Vert X^N(0)\Vert_1>T\Vert b \Vert_\infty$. Then, 
\begin{align}\label{eq62}
\frac{1}{\ell N}\log \bb P_N\bigg[\sup_{t\in [0,T]} \Vert X^N(t)\Vert_1>C\bigg]\; \leq \; -I\big(C-\Vert X^N(0)\Vert_1\big)\,,
\end{align}
for any $N\in \bb N$, where $I(x)=x\log\big(\frac{x}{\Vert b\Vert_\infty}\big)-x+\Vert b\Vert_\infty$.
\end{proposition}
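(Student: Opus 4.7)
The proof reduces to a Cram\'er-type bound for a pure-birth upper envelope of the total mass. The starting observation is that $\|X^N(t)\|_1$ coincides exactly with $M(t)/(\ell N)$, where $M(t) = \sum_{k\in\bb T_N}\eta_k(t)$ is the total particle count. Indeed, integrating the piecewise linear interpolation \eqref{eq22} over $[x_k,x_{k+1}]$ gives the trapezoidal sum, which collapses to $N^{-1}\sum_k X^N(t,k/N) = (\ell N)^{-1}M(t)$. The random walk part of the generator $\mathsf L_N$ preserves $M$, so variations of $M(t)$ come only from birth and death, with total birth rate bounded above by $\ell N\|b\|_\infty$ and nonnegative death rate.

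The main step is to construct an exponential supermartingale from $M$. For $\theta>0$, a direct computation of $\mathsf L_N$ applied to $f(\eta)=e^{\theta M(\eta)}$ gives
\begin{equation*}
\mathsf L_N e^{\theta M(\eta)}
\;=\; e^{\theta M(\eta)}\Big[(e^\theta-1)\sum_{k}\ell\, b(\ell^{-1}\eta_k)\,+\,(e^{-\theta}-1)\sum_{k}\ell\, d(\ell^{-1}\eta_k)\Big]
\;\leq\; (e^\theta-1)\,\ell N\|b\|_\infty\, e^{\theta M(\eta)},
\end{equation*}
using $d\geq 0$. Consequently $Z(t):=\exp\bigl(\theta M(t) - (e^\theta-1)\ell N\|b\|_\infty\, t\bigr)$ is a nonnegative supermartingale with $\bb E_N[Z(0)]=e^{\theta M(0)}=e^{\theta\ell N\|X^N(0)\|_1}$. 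The only nontrivial point in asserting the supermartingale property is the unboundedness of the state space; this is handled by the standard localization at the stopping times $\tau_n=\inf\{t:M(t)\geq n\}$, on which $Z(t\wedge\tau_n)\leq e^{\theta n}$ lets one pass to the limit via monotone/dominated convergence.

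Now apply Doob's maximal inequality for positive supermartingales. If $\sup_{t\leq T}\|X^N(t)\|_1>C$, then there is $t^*\leq T$ with $M(t^*)>\ell N C$, which forces $\sup_{t\leq T} Z(t) > \exp\bigl(\theta\ell N C - (e^\theta-1)\ell N\|b\|_\infty T\bigr)$. Therefore, writing $a=C-\|X^N(0)\|_1$,
\begin{equation*}
\bb P_N\Big[\sup_{t\in[0,T]}\|X^N(t)\|_1>C\Big]
\;\leq\; \exp\Big(-\ell N\bigl[\theta a-T\|b\|_\infty(e^\theta-1)\bigr]\Big).
\end{equation*}
Finally, optimize over $\theta>0$ using the hypothesis $a>T\|b\|_\infty$: the maximizer is $\theta^\ast=\log\bigl(a/(T\|b\|_\infty)\bigr)$, which plugs in to produce the Cram\'er rate function of a Poisson variable, yielding the stated estimate $(\ell N)^{-1}\log\bb P_N[\cdots]\leq -I(a)$. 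The only delicate step is the supermartingale/localization argument in paragraph two; the rest is the Chernoff-Doob recipe applied to a pure-birth domination of $M(t)$.
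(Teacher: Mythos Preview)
Your proof is correct and reaches the same bound, but the route differs from the paper's. The paper argues by an explicit coupling: it constructs a pure-birth process $W^N(t)$ with constant jump rate $N\ell\|b\|_\infty$ and initial value $W^N(0)=\sum_k\eta_k(0)$, couples it to the particle system so that $W^N(t)\geq\sum_k\eta_k(t)$ almost surely, and then observes that $W^N(T)-W^N(0)$ is Poisson$(\ell N T\|b\|_\infty)$, which it rewrites as a sum of $\ell N$ i.i.d.\ Poisson$(T\|b\|_\infty)$ variables in order to invoke the classical Cram\'er bound. You instead bypass the coupling entirely by computing $\mathsf L_N e^{\theta M}$ and extracting the supermartingale $Z(t)=\exp\bigl(\theta M(t)-(e^\theta-1)\ell N\|b\|_\infty t\bigr)$, then applying Doob's maximal inequality and optimizing in $\theta$. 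The two arguments are dual---your supermartingale is precisely the exponential martingale of the dominating pure-birth process the paper couples to---so the final Legendre optimization is identical. Your version is a bit more self-contained (no auxiliary process needs to be built), while the paper's version makes the reduction to textbook i.i.d.\ large deviations more visible. Note also that your optimization produces $a\log\bigl(a/(T\|b\|_\infty)\bigr)-a+T\|b\|_\infty$, the Cram\'er rate of Poisson$(T\|b\|_\infty)$; this is what the paper's proof actually yields as well, so the displayed formula for $I$ in the statement has a missing factor of $T$.
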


\begin{proof}
First of all, we note that $I(x)$ is the rate function for sums of i.i.d.\ random variables with distribution Poisson of parameter $\Vert b\Vert_\infty$. 
To prove \eqref{eq62}, we consider a   birth process $W^N(t)$ on the state space $\bb N$ which jump rate $k$ to  $k+1$ is   $N\ell \Vert b\Vert_\infty$ for any $k\in \bb N$ and $W^N(0)=\sum_{k\in \bb T_N}\eta_k(0)$. Recall that, by assumption, the initial quantity of particles is a deterministic value.
Since the rate at which a particle is created somewhere in the particle system $\eta(t)$ is smaller than $N\ell \Vert b\Vert_\infty$, it is a standard procedure to construct a coupling between  $W^N(t)$ and $\eta(t)$ such that, almost surely,
\begin{align*}
W^N(t)\; \geq\; \sum_{k\in \bb T_N} \eta_k(t)\,, \quad \forall\, t\in [0,T]\,,
\end{align*}
which implies that, almost surely,
\begin{align}\label{eq63}
\frac{1}{\ell N}W^N(t)\; \geq\; \frac{1}{\ell N}\sum_{k\in \bb T_N} \eta_k(t)\;=\;  \Vert X^N(t)\Vert_1\,, \quad \forall\, t\in [0,T]\,.
\end{align}
Abusing of notation, denote the coupling between $\eta(t)$ and $W^N(t)$ also by $\bb P_N$, and by $\widetilde{P}$ the marginal probability concerning $W^N(t)$.  Therefore, in view of \eqref{eq63},
\begin{align}
\bb P_N\bigg[\sup_{t\in [0,T]} \Vert X^N(t)\Vert_1>C\bigg]& \;\leq\; \bb P_N\bigg[\sup_{t\in [0,T]}\frac{1}{\ell N}W^N(T)> C\bigg] \notag\\
&\;\leq\;\widetilde{P}\bigg[W^N(T)- W^N(0)> \ell N C- W^N(0) \bigg]\,.\label{eq64}
\end{align}
Since the distribution of $W^N(T)-W^N(0)$ is Poisson of parameter $\ell N T \Vert b\Vert_\infty$, and sum of independent Poisson random variables is Poisson, the probability in \eqref{eq64} is equal to
\begin{align*}
\widetilde{P}\Bigg(\frac{Z_1+\cdots+Z_{\ell N}}{\ell N}> C-\frac{W^N(0)}{\ell N}\Bigg)\;=\; \widetilde{P}\Bigg(\frac{Z_1+\cdots+Z_{\ell N}}{\ell N}> C-\Vert X^N(0)\Vert_1\Bigg)\,,
\end{align*}
where $Z_1,Z_2,\ldots$ are i.i.d.\ random variables of distribution $\text{Poisson}\big(T\Vert b \Vert_\infty\big)$ on some probability space with probability $\widetilde{P}$. Since $C-\Vert X^N(0)\Vert_1>T \Vert b \Vert_\infty$, standard large deviations for sums of i.i.d.\ random variables gives us that 
\begin{align*}
\frac{1}{\ell N}\log \widetilde{P}\Bigg(\frac{Z_1+\cdots+Z_{\ell N}}{\ell N}> C-\Vert X^N(0)\Vert_1\Bigg)\;\leq \; -I\Big(C-\Vert X^N(0)\Vert_1\Big)\,,
\end{align*}
where $I(x)=x\log\big(\frac{x}{\Vert b\Vert_\infty}\big)-x+\Vert b\Vert_\infty$, concluding the proof.
\end{proof}
\begin{proposition}
For every continuous function  $H: [0,+\infty)\times\bb T\rightarrow \bb R $ and $\varepsilon>0$,
\begin{align}\label{limit for tight}
\lim_{\delta\searrow 0}\limsup_{N\rightarrow\infty}\dfrac{1}{\ell N}\log\mathbb{P}_N\Bigg[\sup_{\vert t-s\vert<\delta}\Big\vert \big\< X^N(t),H(t)\big\>- \big\< X^N(s),H(s)\big\> \Big\vert >\varepsilon \Bigg]\;=\; -\infty\,.
\end{align}
\end{proposition}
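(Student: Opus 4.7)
The plan is to decompose $\<X^N(t), H(t)\>$ via a Dynkin martingale and control the drift and martingale pieces separately through exponential Chebyshev estimates. First, by approximating $H$ uniformly by functions in $C^{1,2}$ and using the pointwise bound $|\<X^N(t), H(t) - H_\varepsilon(t)\>| \leq \|H - H_\varepsilon\|_\infty \|X^N(t)\|_1$ together with the super-exponential estimate on $\sup_{t\in[0,T]}\|X^N(t)\|_1$ furnished by Proposition \ref{limit for tight0}, it is enough to establish the bound for $H \in C^{1,2}$.

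For $H \in C^{1,2}$, Dynkin's formula applied to the time-dependent observable $\<X^N(s), H(s)\>$, followed by a discrete summation by parts transferring the Laplacian from $X^N$ onto $H$, produces
\begin{equation*}
\<X^N(t),H(t)\> \,=\, \<X^N(0),H(0)\> + \int_0^t \Big(\big\<X^N(s),\partial_s H(s) + \Delta_N H(s)\big\> + \big\<b(X^N(s)) - d(X^N(s)), H(s)\big\>\Big)\,ds + M^H_t,
\end{equation*}
where $M^H_t$ is a mean-zero martingale. Since $b, d$ are Lipschitz with $d(0)=0$, the drift integrand is bounded in absolute value by $C_H(1 + \|X^N(s)\|_1)$; hence on the event $\mc A_C := \{\sup_{t\in[0,T]}\|X^N(t)\|_1 \leq C\}$, drift increments over any interval of length $\delta$ are at most $C_H(1+C)\delta$, which is smaller than $\varepsilon/4$ for $\delta$ sufficiently small. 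By Proposition \ref{limit for tight0}, $\bb P_N[\mc A_C^c]$ is super-exponentially small for $C$ large.

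For the martingale term, I partition $[0,T]$ into $\lceil T/\delta \rceil$ intervals $I_j = [t_j, t_{j+1}]$ and use $\sup_{|t-s|<\delta}|Y(t)-Y(s)| \leq 3 \max_j \sup_{t\in I_j}|Y(t)-Y(t_j)|$ with $Y(t) = \<X^N(t), H(t)\>$, so that it suffices to bound $\bb P_N[\sup_{t\in I_j}|M^H_t - M^H_{t_j}| > \varepsilon/12,\, \mc A_C]$ uniformly in $j$. The relevant exponential martingale is $\exp\big(\theta \ell N (M^H_t - M^H_{t_j}) - A^{\theta, H}_{t_j, t}\big)$, whose compensator is computed via the same algebra as in the proof of Proposition \ref{radon-nikodym derivative}. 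A Taylor expansion of the exponentials shows that the first-order terms of $A^{\theta, H}$ exactly cancel the drift already subtracted in $M^H$, while the second-order terms are bounded on $\mc A_C$ by $C'_H(1+C)\theta^2 \ell N \delta$. Doob's maximal inequality combined with exponential Chebyshev then yields
\begin{equation*}
\bb P_N\Big[\sup_{t\in I_j}\big|M^H_t - M^H_{t_j}\big| > \varepsilon/12,\, \mc A_C\Big] \,\leq\, 2\exp\big(-\theta\, \ell N \varepsilon/12 + C'_H(1+C)\,\theta^2 \ell N \delta\big),
\end{equation*}
and optimizing in $\theta$ produces a bound of the form $2\exp\!\big(-c\, \ell N \varepsilon^2/\delta\big)$. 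A union bound over the $\lceil T/\delta\rceil$ intervals, combined with \eqref{limsup} and the super-exponential bound on $\mc A_C^c$, gives
$\tfrac{1}{\ell N}\log(\cdot) \leq \max\big(-c\varepsilon^2/\delta,\, -I(C-\|X^N(0)\|_1)\big) + o_N(1)$,
and letting $N\to\infty$, then $\delta\to 0$, then $C\to\infty$ sends the right-hand side to $-\infty$.

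The main technical obstacle is the compensator calculation for the diffusive part: the jumps $k\to k\pm 1$ occur at rate $N^2\eta_k$ with size $O(1/(\ell N))$, so that the exponent $\theta(H_{k\pm 1} - H_k) = O(\theta/N)$ is small, but the large rate amplifies the second-order Taylor remainder. Careful bookkeeping shows that this remainder reduces, after summation over $k$, to an $L^1$-type quantity controlled by $\<X^N, (\nabla H)^2\> \leq \|\nabla H\|_\infty^2 \|X^N\|_1$, which is precisely what is dominated on $\mc A_C$; higher-order Taylor terms are negligible once the optimal $\theta$ remains bounded, which is the case here since $\theta \sim \varepsilon/\delta$ is fixed in the regime $N \to \infty$.
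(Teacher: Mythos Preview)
Your proposal is correct and follows essentially the same strategy as the paper: both restrict to the event $\{\sup_t \|X^N(t)\|_1 \leq C\}$ via Proposition~\ref{limit for tight0}, use the exponential (Radon--Nikodym) martingale with a parameter $\theta$ (the paper's $a$) together with Doob's inequality on $\delta$-subintervals, and then send the parameters to their limits. One minor caveat: for the reaction jumps the exponent $\theta H_k$ is $O(1)$ rather than $O(\theta/N)$, so the reaction part of the compensator is not genuinely quadratic in $\theta$ and your optimization step is not quite clean; the paper sidesteps this by simply fixing $a$, sending $\delta\to 0$, and only then letting $a\to\infty$, which you could equally well do.
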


\begin{proof} Partitioning the time interval $[0,T]$ in intervals of size at most $\delta$ and applying the triangular inequality together with \eqref{limsup}, one can see that
it is enough to assure that 
\begin{align}\label{limit for tight2}
\lim_{\delta\searrow 0}\limsup_{N\rightarrow\infty}\dfrac{1}{\ell N}\log\mathbb{P}_N\Bigg[\sup_{k\delta  \leq t\leq (k+1)\delta}\Big\vert \big\< X^N(t),H(t)\big\>- \big\< X^N(k\delta),H(k\delta)\big\> \Big\vert >\varepsilon \Bigg]\;=\; -\infty
\end{align}
in order to have \eqref{limit for tight}. Therefore, our goal from now on is to prove \eqref{limit for tight2} for fixed $K\in\{1,\ldots, \lfloor T/\delta\rfloor\}$. Since $|x|=\max\{x,-x\}$ and using \eqref{limsup}, it is enough to show that 
\begin{align}\label{limit for tight21}
\lim_{\delta\searrow 0}\limsup_{N\rightarrow\infty}\dfrac{1}{\ell N}\log\mathbb{P}_N\Bigg[\sup_{K\delta  \leq t\leq (K+1)\delta}\!\Big( \big\< X^N(t),H(t)\big\>- \big\< X^N(K\delta),H(K\delta)\big\> \Big) >\varepsilon \Bigg]\;=\; -\infty
\end{align}
and 
\begin{align}\label{limit for tight21b}
\lim_{\delta\searrow 0}\limsup_{N\rightarrow\infty}\dfrac{1}{\ell N}\log\mathbb{P}_N\Bigg[\sup_{K\delta  \leq t\leq (K+1)\delta}\!\Big( \big\< X^N(t),H(t)\big\>- \big\< X^N(K\delta),H(K\delta)\big\> \Big) \!<\!-\varepsilon \Bigg]\;=\; -\infty\,.
\end{align}
We will only prove \eqref{limit for tight21} whereas the argument for \eqref{limit for tight21b} is similar.
Analogously to \eqref{eq24}, we may find 
\begin{align*}
A_a^N(t)\;=\; & \int_{K\delta }^t \frac{1}{N}\sum_{k=0}^{N-1}\bigg[b\big(X_k^N(s)\big)\big(1-e^{a H_k}\big)+d\big(X_k^N(s)\big)\big(1-e^{-aH_k}\big) 
\\
& - X_k^N(s)\bigg(a\Delta_N H_k + \frac{a^2}{2} \Big( \big(\nabla_N^+ H_k\big)^2 + \big(\nabla_N^- H_k\big)^2 \Big) + O(1/N)\bigg)\bigg] ds 
\\
& + \frac{a}{N}\sum_{k=0}^{N-1} \bigg( H_k(t)X_k^N(t) -H_k(K\delta)X_k^N(K\delta)- \int_{K\delta}^t X_k^N(s)\partial_sH_k ds\bigg)
\end{align*}
such that $\exp\Big\{-\ell N A_a^N \Big\}$ is a mean-one martingale. Define $R^N_a$ by the equality 
\begin{align*}
 R^N_a(t)  & \;=\; A^N_a(t)- \frac{a}{N}\sum_{k=0}^{N-1} \Big( H_k(t)X_k^N(t) -H_k(K\delta)X_k^N(K\delta)\Big)\\
 & \;=\; A^N_a(t)- a\Big[\big\< X^N(t),H(t)\big\>- \big\< X^N(K\delta),H(K\delta)\big\>\Big]\,.
\end{align*}
Then,
\begin{align*}
&\mathbb{P}_N\Bigg[\sup_{K\delta  \leq t\leq (K+1)\delta}\Big( \big\< X^N(t),H(t)\big\>- \big\< X^N(K\delta),H(K\delta)\big\> \Big) >\varepsilon \Bigg]\\
&=\; \mathbb{P}_N\Bigg[\sup_{K\delta  \leq t\leq (K+1)\delta} \big(A^N_a(t) -R^N_a(t) \big)>a \eps \Bigg]\;=\; \mathbb{P}_N\Bigg[\sup_{K\delta  \leq t\leq (K+1)\delta} e^{ \ell N \big(A^N_a(t) -R^N_a(t) \big)}>e^{a \eps \ell N}\Bigg]\,. 
\end{align*}
Define the event 
\begin{align*}
E\;=\; \Big[\sup_{t\in [0,T]} \Vert X^N(t)\Vert_1\leq C\Big]\,.
\end{align*}
Restrict to $E$, it is straightforward to check that $|R^N_a|\leq m(H,b,d)C \delta $, where $m(H,b,d)$ is a constant depending only on $H$, on its first and second derivatives and on the Lipschitz constant of $b$ and $d$. Note that the factor $\delta$  appears since the integral in time is taken over the interval $[K\delta, t]$.
Hence, partitioning  into $E$ and $E^\complement$, 
we have that 
\begin{align}
&\mathbb{P}_N\Bigg[\sup_{K\delta  \leq t\leq (K+1)\delta} e^{ \ell N (A^N_a(t) -R^N_a(t) )}>e^{a \eps \ell N}\Bigg]\label{eq68}\\
&\leq \; \mathbb{P}_N\Bigg[\sup_{K\delta  \leq t\leq (K+1)\delta} e^{ \ell N A^N_a(t) }>e^{\ell N (a \eps- m(H,b,d)C\delta ) }\Bigg] + \mathbb{P}_N\big[E^\complement\big]\,.\notag
\end{align}
By Doob's inequality, the right hand side of above is bounded from above by
\begin{align*}
&\frac{\mathbb{E}_N\big[e^{ \ell N A^N_a(t) }\big]}{e^{\ell N (a \eps- m(H,b,d)C\delta )}} + \mathbb{P}_N\big[E^\complement\big]\;=\; \exp\{-\ell N (a \eps- m(H,b,d)C\delta ) \}+\mathbb{P}_N\big[E^\complement\big]\,.
\end{align*}
Applying the logarithm function in \eqref{eq68}, dividing it by $\ell N$, taking the $\limsup_N$ and recalling \eqref{limsup} give us that 
\begin{align*}
&\limsup_{N\to\infty}\dfrac{1}{\ell N}\log\mathbb{P}_N\Bigg[\sup_{K\delta  \leq t\leq (K+1)\delta} e^{ \ell N (A^N_a(t) -R^N_a(t) )}>e^{a \eps \ell N}\Bigg] \\
&\leq \;\max\Big\{ -(a \eps- m(H,b,d)C\delta ) \;,\; \limsup_{N\to\infty} \frac{1}{\ell N}\log \mathbb{P}_N\big[E^\complement\big]\Big\}\,.
\end{align*}
Applying  Proposition~\ref{limit for tight0}, we can bound the expression above by
\begin{align*}
& \max\Big\{ -a \eps+ m(H,b,d)C\delta  \;,\;  \limsup_{N\to\infty} -I\big(C-\Vert X^N(0)\Vert_1\big)\Big\}\\
& =\;\max\Big\{ -a \eps+ m(H,b,d)C\delta  \;,\;   -I\big(C-\Vert \psi(0)\Vert_1\big)\Big\}\,.
\end{align*}
Since $\lim_{x\to \infty} I(x)=\infty$, we are allowed to first choose $C$ large, then $\delta$ small, and then finally $a$ large, leading us  to conclude that
\begin{align*}
&\limsup_{N\to\infty}\dfrac{1}{\ell N}\log\mathbb{P}_N\Bigg[\sup_{K\delta  \leq t\leq (K+1)\delta} e^{ \ell N (A^N_a(t) -R^N_a(t) )}>e^{a \eps \ell N}\Bigg]\; =\; -\infty\,,
\end{align*}
finishing the proof.
\end{proof}

\begin{proposition}\label{exponentially tight}
The sequence of measures $\big\{{P}_N\big\}_{N\in \mathbb{N}}$ on $\mathscr{D}_{C(\bb T)}$ is exponentially tight.
\end{proposition}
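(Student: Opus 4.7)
The aim is, for each $b>0$, to exhibit a compact set $\mc{K}_b\subseteq \mathscr{D}_{C(\bb T)}$ with $\limsup_N (\ell N)^{-1}\log {P}_N(\mc{K}_b^\complement)\leq -b$. By the Arzel\`a--Ascoli criterion in $C(\bb T)$ together with a Jakubowski-type characterization of compactness in the Skorohod space, this reduces to producing three families of exponential estimates: \textbf{(A)} a uniform-in-time bound for $\|X^N(t,\cdot)\|_\infty$; \textbf{(B)} a uniform-in-time spatial modulus of continuity for $X^N(t,\cdot)$; and \textbf{(C)} a Skorohod time-modulus bound for $X^N$ viewed as a $C(\bb T)$-valued c\`adl\`ag process. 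The set $\mc K_b$ will then be of the form $\{u\in\mathscr{D}_{C(\bb T)}: \|u\|_\infty\leq C_b,\ \omega_x(u,\delta)\leq\rho_b(\delta)\ \forall\delta>0,\ w'(u,\delta_b)\leq\eta_b\}$ for suitably chosen parameters.

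For \textbf{(A)}, I would upgrade Proposition \ref{limit for tight0} from $L^1$ to $L^\infty$ by a sitewise coupling: at each site $k$ the creation rate is bounded by $\ell\|b\|_\infty$, so the number of births at $k$ during $[0,T]$ is dominated by a Poisson variable of parameter $\ell T\|b\|_\infty$; the migration part only redistributes mass, so an iterative bound, together with a union bound over $k\in \bb T_N$ and the standard Cram\'er estimate for Poisson tails, yields
\[
\limsup_{N\to\infty}\tfrac{1}{\ell N}\log \bb P_N\Big[\sup_{t\in[0,T]}\|X^N(t)\|_\infty > C\Big]\leq -I_0(C)
\]
with $I_0(C)\to+\infty$ as $C\to+\infty$. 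For \textbf{(C)}, Proposition \ref{limit for tight} applied to a countable $\|\cdot\|_\infty$-dense family $\{H_j\}\subset C^{1,2}$ controls the temporal oscillation of each $\langle X^N(t),H_j\rangle$ exponentially at rate $\ell N$; a diagonal argument combined with the spatial control in \textbf{(B)} upgrades this to a Skorohod time-modulus bound in the supremum norm.

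The main obstacle is \textbf{(B)}. The plan is to exploit the Duhamel representation \eqref{Duhamel T_N} with $H\equiv 0$: the deterministic contribution $T_N(t)X^N(0)+\int_0^t T_N(t-s)\mc{F}(X^N(s))\,ds$ is spatially Lipschitz uniformly in $t$, since $T_N(t)$ is the semigroup of a symmetric random walk, $X^N(0)$ is the discretization of $\gamma\in C^4$, and the nonlinearity $\mc{F}$ stays bounded on the event provided by \textbf{(A)}. The stochastic contribution $\int_0^t T_N(t-s)\,dZ^N(s)$ is handled by mimicking the proof of Lemma \ref{lemma43} with the test function $f=N(\one_{[(k+1)/N,(k+2)/N)}-\one_{[k/N,(k+1)/N)})$ in place of $f=N\one_{[k/N,(k+1)/N)}$; this replaces pointwise values by the discrete spatial increment $X^N_{k+1}-X^N_k$ and, once one absorbs the cost of differentiating the discrete heat kernel into a sharper analogue of Lemma \ref{4.3 Blount}, yields that $\sup_{t\in[0,T]}\max_k |X^N_{k+1}(t)-X^N_k(t)|$ is uniformly small with probability exponentially close to one at rate $\ell N$. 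Combining pillars \textbf{(A)}, \textbf{(B)}, \textbf{(C)} via \eqref{limsup} then gives the required exponential rate $-b$ for the chosen $\mc K_b$.
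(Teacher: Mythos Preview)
Your three-pillar decomposition is structurally sound and is in fact more explicit than the paper's route. The paper does something considerably softer: it only uses the initial-time bound $\|X^N(0)\|_\infty\le c$ (trivial, since the initial data are deterministic and converge to $\gamma$) together with the temporal-oscillation estimate of Proposition~\ref{limit for tight} applied to a countable family $\{H_j\}$ dense in $C(\bb T)$, builds the set $B_\delta=L_c\cap\bigcap_{j,n}C^{H_j}_{\delta,1/n}$, and then argues that $B_\delta$ is contained in an Arzel\`a--Ascoli-type precompact set $A$ defined via the sup-norm modulus. In particular the paper never establishes an explicit sup-in-time bound on $\|X^N(t)\|_\infty$ nor a separate spatial modulus; your pillars \textbf{(A)} and \textbf{(B)} are genuine additions relative to the paper, and your pillar \textbf{(C)} coincides with its core input.

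That said, your argument for \textbf{(A)} has a real gap. The clause ``the migration part only redistributes mass'' is correct for $\|X^N(t)\|_1$ but says nothing about $\|X^N(t)\|_\infty$: at a fixed site $k$, particles also arrive from neighbours at rate $N^2(\eta_{k-1}+\eta_{k+1})$, which is not dominated by any per-site Poisson clock of parameter $O(\ell)$ and can concentrate mass arbitrarily. A sitewise Cram\'er bound on births therefore does not control $\eta_k(t)$, and the ``iterative bound'' you invoke is not spelled out; the natural Duhamel/Gronwall route would need a martingale estimate at rate $\ell N$, whereas Lemma~\ref{lemma43} gives only rate $\ell$ and is proved for a \emph{stopped} process assuming an a~priori sup bound, so there is a circularity to break. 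Your pillar \textbf{(B)} is a reasonable plan, but the ``sharper analogue of Lemma~\ref{4.3 Blount}'' for the differenced test function is asserted rather than proved, and must deliver a bound on $\nabla_N^+X^N$ uniform in $N$ at exponential rate $\ell N$; this does not follow directly from the machinery of Section~\ref{s3}.
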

\begin{proof}
Using  \eqref{limit for tight}, we obtain the sequence of compact sets satisfying \eqref{eq exp tight}. Define the following sets:
\begin{align*}
&L_c\;=\;\big\{u\in \mathscr{D}_{C(\bb T)}: \|u_0\|_{\infty}\le c \big\}\,,\\
&C_{\delta,1/n}\;=\;\bigg\{u\in \mathscr{D}_{C(\bb T)}:\sup_{\vert t-s \vert<\delta} \|u_t-u_s \|_{\infty}\leq 1/n   \bigg\}\,,\\
& A\;=\;\big(\cap_{n=1}^{\infty} C_{\delta,1/n}\big)\cap L_c\,.
\end{align*}
By the Arzelá-Ascoli Theorem, the set  $A$ is pre-compact, hence $\overline{A}$ is compact. Taking $\{H_j\}_{j\in\mathbb{N}}$ a dense set in $C(\mathbb{T})$, let us define  
\begin{align*}
C_{\delta,1/n}^{H_j}\;=\;\bigg\{u\in \mathscr{D}_{C(\bb T)}:\sup_{\vert t-s \vert<\delta} \bigg\vert\int u_t(x)H_j(t,x)dx-\int u_s(x)H_j(s,x)dx\bigg\vert \leq 1/n   \bigg\}
\end{align*}
and 
\begin{align*}
{B}_{\delta}\;=\; {L_c\cap \big(\cap_{j,n=1}^{\infty} C_{\delta,1/n}^{H_j}\big)}\,.
\end{align*} 
Our goal is to prove that $\overline{B}_{\delta}$ is compact, so it suffices to verify that $B_{\delta}\subseteq A$. Let $u\in \big(\cap_{n=1}^{\infty} C_{\delta,1/n}\big)^\complement$, then there exists $n_0\in\mathbb{N}$ such that $u\in C_{\delta,1/n_0}^\complement$, that is, there exists $\vert t-s \vert <\delta$ such that $\Vert u_t - u_s \Vert_{\infty}> 1/n$. Since $\{H_j\}_j$ is dense, there exists $H_{j_0}$ with $\big\vert\int u_t(x)H_{j_0}(t,x)dx- \int u_s(x)H_{j_0}(s,x)dx\big\vert > 1/n$, hence $u\in  \big(C_{\delta,1/n}^{H_{j_0}}\big)^\complement$.  
Finally we show \eqref{eq exp tight}. Note that
\begin{align*}
&\limsup_{N\rightarrow\infty}\dfrac{1}{\ell N} \log P_N \big[\overline{B}_{\delta}^\complement\big]  \\
&=\; \limsup_{N\rightarrow\infty}\dfrac{1}{\ell N} \log P_N \bigg[\Big(\overline{L_c\cap \big(\cap_{j,n=1}^{\infty} C_{\delta,1/n}^{H_j}} \Big)^\complement\bigg]
\nonumber\\
& \leq \limsup_{N\rightarrow\infty}\dfrac{1}{\ell N} \log\Bigg[P_N\Big[L_c^\complement\Big]+ \sum_{j,n=1}^\infty P_N\bigg[\Big(C_{\delta,1/n}^{H_j}\Big)^\complement\bigg]\Bigg]
\nonumber\\
& \leq \max \Bigg\{ \limsup_{N\rightarrow\infty}\dfrac{1}{\ell N} \log P_N\Big[L_c^\complement\Big]\;,\;
 \limsup_{N\rightarrow\infty}\dfrac{1}{\ell N} \log\Bigg[ \sum_{j,n=1}^\infty P_N\bigg[\Big(C_{\delta,1/n}^{H_j}\Big)^\complement\bigg]\Bigg]\Bigg\}\,,
\end{align*}
where in second inequality we have used \eqref{limsup}.
Since $$\limsup_{N\rightarrow\infty}\dfrac{1}{\ell N} \log\mathbb{P}_N\Big[\Vert X^N(0)\Vert_\infty>c\Big]\;=\; -\infty\,,$$ then
\begin{align}\label{exptight}
\limsup_{N\rightarrow\infty}\dfrac{1}{\ell N} \log P_N \Big[\overline{B}_{\delta}^\complement\Big] \;\leq\; \limsup_{N\rightarrow\infty}\dfrac{1}{\ell N} \log\Bigg[\sum_{j,n=1}^\infty P_N\bigg[\Big(C_{\delta,1/n}^{H_j}\Big)^\complement\bigg]\Bigg]\,.
\end{align}
By \eqref{limit for tight}, there exists $\delta_0$ such that 
\begin{align*}
\limsup_{N\rightarrow\infty}\dfrac{1}{\ell N} \log P_N\bigg[\Big(C_{\delta_0,\varepsilon}^{H_j}\Big)^\complement\bigg]\;\leq \; \frac{-b_{\delta_0}}{\varepsilon}\,,
\end{align*}
and there exists $N_0$ such that for all $N>N_0$,
\begin{align*}
\dfrac{1}{\ell N} \log P_N\bigg[\Big(C_{\delta_0,\varepsilon}^{H_j}\Big)^\complement\bigg]\;\leq\;  \frac{-b_{\delta_0}}{\varepsilon}\,.
\end{align*}
Therefore, 
\begin{align*}
\sum_{j,n=1}^\infty P_N\bigg[\Big(C_{\delta,1/n}^{H_j}\Big)^\complement\bigg] \;\leq\; \sum_{j,n=1}^\infty \exp\{-b_\delta \ell Nn\} \;=\; \frac{e^{-b_\delta  \ell N}}{1-e^{-b_\delta \ell N} }\;\leq\; 2e^{-b_\delta \ell N}.
\end{align*}
Then, coming back to \eqref{exptight},
\begin{align*}
\limsup_{N\rightarrow\infty}\dfrac{1}{\ell N} \log P_N \Big[\overline{B}_{\delta}^\complement\Big] &  \;<\; \limsup_{N\rightarrow\infty}\dfrac{1}{\ell N} \log \big( 2e^{-b_\delta \ell N} \big) \;=\; -b_\delta\,.
\end{align*}
Now, taking $b=b_\delta$ we obtain the exponential tightness \eqref{eq exp tight} hence finishing the proof. 
\end{proof}

Therefore, with the Lemma \ref{tight+bound=upperbound} and Proposition \ref{exponentially tight} at hand we have concluded the proof of the upper bound for large deviations.

\subsection{Large deviations lower bound in the power law case}

Next, we  obtain a non-variational  formulation of the rate functional $I$ for profiles $\psi$ which are solutions of the partial differential equation corresponding  to the perturbed process  associated to some perturbation~$H$.

\begin{proposition}\label{charact_H_suave}
  Given  $H\in C^{1,2}$, let  $\psi=\psi^H$ be the unique   solution of \eqref{EDP}. Then,
\begin{equation}\label{charact_H_suave_eq}
\begin{split}
 {\bs I}(\psi)\;& \overset{\text{def}}{=}\;\sup_GJ_G(\psi)\; =\;J_H(\psi)
 \; =  \;   \int_0^t\int_{\mathbb{T}} \Big[(\partial_x H)^2\psi+ b(\psi)\,\bs \Gamma(H) +d(\psi)\,\bs \Gamma(-H)\Big] dx\,ds\,,
\end{split}
\end{equation}
where $\bs \Gamma(y)=1-e^{y}+y\,e^y$,  $\ y\in\bb R\,.$
\end{proposition}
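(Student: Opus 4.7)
The plan is to prove the two equalities separately: first derive the explicit closed form for $J_H(\psi)$, and then show that the supremum of $J_G(\psi)$ over $G$ is attained precisely at $G=H$.

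\textbf{Computing $J_H(\psi)$.} I would rewrite the boundary-in-time term, by the fundamental theorem of calculus, as
\begin{equation*}
\int_{\mathbb T}\bigl[H\psi\bigr]_0^t\,dx \;=\; \int_0^t\!\!\int_{\mathbb T}\bigl(\partial_s H\cdot\psi+H\,\partial_s\psi\bigr)\,dx\,ds,
\end{equation*}
so the $\partial_s H\cdot\psi$ contribution cancels with the $-\psi\,\partial_s H$ term already present in $J_H(\psi)$. Substituting $\partial_s\psi=\Delta\psi-2\partial_x(\psi\,\partial_x H)+e^H b(\psi)-e^{-H}d(\psi)$ from \eqref{EDP} into $\int H\partial_s\psi\,dx$ and integrating by parts on the torus — using $\int H\Delta\psi\,dx=\int\psi\Delta H\,dx$ and $\int -2H\,\partial_x(\psi\,\partial_x H)\,dx=\int 2\psi(\partial_x H)^2\,dx$ — the two $\Delta H$ contributions cancel and there remain the coefficient $2\psi(\partial_x H)^2$ together with $He^H b(\psi)-He^{-H}d(\psi)$. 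Recognising $1-e^H+He^H=\bs\Gamma(H)$ and $1-e^{-H}-He^{-H}=\bs\Gamma(-H)$ delivers the announced integral expression.

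\textbf{Variational identification.} To identify $J_H(\psi)$ with $\sup_G J_G(\psi)$ I would show that $H$ is a critical point of $G\mapsto J_G(\psi)$ and that this functional is concave in $G$. For the critical-point step, for arbitrary $K\in C^{1,2}$ one computes
\begin{align*}
\frac{d}{d\varepsilon}J_{H+\varepsilon K}(\psi)\Big|_{\varepsilon=0} \;=\; &\int_{\mathbb T}\bigl[K\psi\bigr]_0^t\,dx \\
&+\int_0^t\!\!\int_{\mathbb T}\Bigl[-\psi\bigl(\partial_s K+\Delta K+2\nabla H\cdot\nabla K\bigr)-b(\psi)e^H K+d(\psi)e^{-H}K\Bigr]dx\,ds.
\end{align*}
Running the same boundary integration by parts on $\int[K\psi]_0^t\,dx$ to cancel $\partial_s K$, and transferring $\Delta$ and one $\nabla$ off $K$, the whole expression collapses to $\int_0^t\!\!\int_{\mathbb T}K\bigl[\partial_s\psi-\Delta\psi+2\partial_x(\psi\,\partial_x H)-e^H b(\psi)+e^{-H}d(\psi)\bigr]dx\,ds$, which vanishes identically by \eqref{EDP}. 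Thus $H$ is a critical point of $G\mapsto J_G(\psi)$.

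\textbf{Concavity and conclusion.} Finally I would split $J_G(\psi)$ into terms affine in $G$ (the $\int[G\psi]_0^t\,dx$, $-\int\psi\,\partial_s G$, and $-\int\psi\,\Delta G$ pieces) and three terms that are concave in $G$, namely $-\int\psi(\nabla G)^2$, $\int b(\psi)(1-e^G)$, and $\int d(\psi)(1-e^{-G})$. Concavity of the latter three relies on the nonnegativity of $\psi$, $b(\psi)$, $d(\psi)$; the positivity $\psi\geq 0$ follows from a standard maximum-principle argument applied to \eqref{EDP} (using $\gamma\geq 0$ and $d(0)=0$). A concave functional whose Gateaux derivative vanishes at $H$ attains its supremum there, which gives $\sup_G J_G(\psi)=J_H(\psi)$ and completes the proof. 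The only step I expect to require any care is the qualitative positivity of $\psi$ needed for concavity; the remaining work is bookkeeping with integration by parts on the torus.
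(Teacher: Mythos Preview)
Your proposal is correct. The computation of $J_H(\psi)$ via the fundamental theorem of calculus and integration by parts is essentially the same manipulation the paper carries out. Where you differ is in the variational step: the paper derives an explicit formula for $J_G(\psi)$ valid for \emph{every} $G$, namely
\[
J_G(\psi)=\int_0^t\!\!\int_{\mathbb T}\Big[-(\partial_x G-\partial_x H)^2\psi+(\partial_x H)^2\psi+b(\psi)\,\overline\Gamma(G,H)+d(\psi)\,\overline\Gamma(-G,-H)\Big]dx\,ds
\]
with $\overline\Gamma(x,y)=1-e^x+xe^y$, and then reads off the maximizer $G=H$ by inspection, since $x\mapsto\overline\Gamma(x,y)$ is maximized at $x=y$ and the completed square $-(\partial_x G-\partial_x H)^2\psi\le 0$. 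Your route instead computes only $J_H(\psi)$ and then argues abstractly via a vanishing G\^ateaux derivative plus concavity. Both are sound and rely on the same positivity of $\psi$, $b(\psi)$, $d(\psi)$ (which you rightly flag). The paper's approach has the small advantage of yielding a closed expression for the gap $J_H(\psi)-J_G(\psi)$, while yours is perhaps the more natural first instinct from a calculus-of-variations standpoint and avoids introducing the auxiliary function $\overline\Gamma$.
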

\begin{proof}
 Multiplying the PDE \eqref{EDP} by a test function $G\in C^{1,2}$ and integrating in space and time, we get that
\begin{equation*}
\begin{split}
\int_{\mathbb{T}}\int_0^t G\partial_t\psi\,ds\,dx \;&=\; \int_{\mathbb{T}}\int_0^t G\p_{xx}^2\psi -  2G\partial_x\big(\psi\partial_xH\big) + G\big[e^{H}b(\psi)-e^{-H}d(\psi)\big]\,ds\,dx \,.
\end{split}
\end{equation*}
Using integration by parts and that
\begin{equation*}
\begin{split}
 Ge^{H}b(\psi)& \;=\;
 b(\psi)\overline{\Gamma}(G,H)-b(\psi)(1-e^{G})\,, 
\\ -Ge^{-H}d(\psi)& \;=\;d(\psi)\overline{\Gamma}(-G,-H)-d(\psi)(1-e^{-G})  \,,
\end{split}
\end{equation*}
where $\overline{\Gamma}(x,y)= 1-e^{x}+x\,e^{y}$, we infer that
\begin{align*}
\int_{\mathbb{T}}&\bigg[G(t,x)\psi(t,x)-G(0,x)\psi(0,x)-\int_0^t \psi(s,x)\partial_t G(s,x)\,ds\bigg]\,dx \;=\; \int_0^t\int_{\mathbb{T}} \p_{xx}^2 G(s,x)\psi(s,x)\,dx\,ds \notag
\\
&+ \int_0^t\int_{\mathbb{T}} 2\psi(s,x)\partial_xG(s,x)\partial_xH(s,x)\,dx\,ds +\int_0^t\int_{\mathbb{T}} b(\psi(s,x))\overline{\Gamma}\big(G(s,x),H(s,x)\big)\\
&-b(\psi(s,x))\big(1-e^{G(s,x)}\big) +d(\psi(s,x))\overline{\Gamma}\big(-G(s,x),-H(s,x)\big)-d(\psi(s,x))\big(1-e^{-G(s,x)}\big) dx\,ds \,,
\end{align*}
Recall the definition of $J_H$ in \eqref{J}. The equality above allows us to deduce that
\begin{align*}
J_G(\psi)\;=\;&  \int_0^t\int_{\mathbb{T}} \Big[-\psi(\partial_xG)^2+  2\psi\partial_xG\partial_xH+ b(\psi)\overline{\Gamma}(G,H)+d(\psi)\overline{\Gamma}(-G,-H)\Big] dx\,ds \,. 
\end{align*}
Finally, noting that $2\partial_xG\partial_xH\;=\; -\big(\partial_xG - \partial_xH \big)^2 +(\partial_x G)^2+(\partial_x H)^2$, we arrive at
\begin{align*}
J_G(\psi) \;=\; \int_0^t\int_{\mathbb{T}}\Big[ -\big(\partial_xG - \partial_xH \big)^2\psi +(\partial_x H)^2\psi 
+ b(\psi)\overline{\Gamma}(G,H)
+d(\psi)\overline{\Gamma}(-G,-H)\Big]\,dx\,ds \,.
\end{align*}  
Fix  $y\in \bb R$. Since the function $x\mapsto \overline{\Gamma}(x,y)$ assumes its maximum  at $x=y$ and  $-(\partial_xG - \partial_xH)^2$ assumes its maximum at $G=H$, we conclude that  ${\bs I}(\psi)=\sup_{G}J_G(\psi)=J_H(\psi)$.
Since  $\bs \Gamma(y)=\overline{\Gamma}(y,y)$,  we obtain \eqref{charact_H_suave_eq}.
\end{proof}

Solutions of \eqref{EDP} for some $H$  provides the special representation above for the rate function. It is thus natural to find the set of profiles $\psi$ for which we may find a perturbation $H$ fulfilling the requirements in order to permit the high density limit (towards $\psi$).

\begin{proposition}\label{eliptic lemma}
Let  $\psi\in C^{2,3}$ such that $ \psi\geq \eps$ for some $\eps>0$.  Then, there exists a unique   solution $H\in C^{1,2}$ 
    of the elliptic equation 
   \begin{equation}\label{eliptic}
\p_{xx}^2 H\,+ \,\displaystyle\frac{\p_x\psi}{\psi}\p_xH\; = \;\frac{\p_{xx}^2\psi- \p_t\psi }{2\psi}  +e^Hb(\psi)-e^{-H}d(\psi)\,.
\end{equation}
 \end{proposition}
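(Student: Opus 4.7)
The plan is to treat the equation, for each fixed $t\in[0,T]$, as a nonlinear elliptic ODE in $x$ on the torus $\bb T$ and to solve it by the direct method of the calculus of variations. The key reformulation is to multiply through by $\psi$, which puts the principal part in divergence form $\p_x(\psi\,\p_xH)$ and exposes a variational structure: the equation is then the Euler--Lagrange equation of
\begin{equation*}
\mc F_t(H) \;=\; \int_{\bb T}\Big[\tfrac{\psi}{2}(\p_xH)^2 \,+\, \tfrac{1}{2}\big(\p_{xx}^2\psi - \p_t\psi\big)\,H \,+\, \psi\,b(\psi)\,e^{H} \,+\, \psi\,d(\psi)\,e^{-H}\Big]\,dx\,.
\end{equation*}
Each summand is convex in $H$: the gradient term strictly convex modulo constants, the middle term linear, and $e^{\pm H}$ strictly convex. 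So $\mc F_t$ is strictly convex on $H^1(\bb T)$, which will immediately deliver uniqueness.

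For existence, coercivity of $\mc F_t$ on $H^1(\bb T)$ follows from two independent ingredients. The bound $\psi\geq\eps$ yields $\int\tfrac{\psi}{2}(\p_xH)^2\,dx \geq \tfrac{\eps}{2}\|\p_xH\|_{L^2}^2$, which by Poincar\'e controls $H$ in $H^1(\bb T)$ modulo its mean. The superlinear growth of $e^{\pm H}$ in the exponential terms controls the mean value $\bar H$ whenever $b(\psi) + d(\psi)$ does not vanish identically, which holds under the standing non-degeneracy hypotheses of the paper. The direct method thus produces a unique minimizer $H(t,\cdot)\in H^1(\bb T)$, and a standard elliptic bootstrap applied to the ODE $\p_{xx}^2 H = F(x, H, \p_xH)$ with $F$ smooth in its arguments (via $b,d\in C^1$ and $\psi\in C^{2,3}$) upgrades $H(t,\cdot)$ to $C^2(\bb T)$.

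For the time regularity $H\in C^{1,2}$, the plan is to invoke the implicit function theorem on the nonlinear map
\begin{equation*}
\Phi(t,H) \;=\; \p_{xx}^2 H \,+\, \tfrac{\p_x\psi}{\psi}\,\p_xH \,-\, \tfrac{\p_{xx}^2\psi-\p_t\psi}{2\psi} \,-\, e^H b(\psi) \,+\, e^{-H}d(\psi)
\end{equation*}
between suitable Banach spaces, say $C^{2,\alpha}(\bb T)\to C^{0,\alpha}(\bb T)$. The $t$-dependence of $\Phi$ is $C^1$ since $\psi\in C^{2,3}$ makes $\p_t\psi$ smooth in time. The Fr\'echet derivative
$D_H\Phi[\phi] \,=\, \p_{xx}^2\phi + (\p_x\psi/\psi)\p_x\phi - \big(e^Hb(\psi)+e^{-H}d(\psi)\big)\phi$
is a uniformly elliptic operator on $\bb T$ with non-positive zero-order part, and the strict convexity of $\mc F_t$ forces its kernel to be trivial; hence by the Fredholm alternative it is an isomorphism. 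The implicit function theorem then delivers $t\mapsto H(t,\cdot)\in C^1\big([0,T], C^{2,\alpha}(\bb T)\big)$, i.e.\ $H\in C^{1,2}$.

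The main obstacle is the invertibility of $D_H\Phi$ in the degenerate regime where $b(\psi)$ and $d(\psi)$ simultaneously approach zero on parts of the domain: the zero-order part of the linearization degenerates there, and one must lean entirely on the principal part $\p_x(\psi\,\p_x\cdot)$, combined with Poincar\'e on the torus and the strict monotonicity of $\Phi$ in $H$ inherited from the strict convexity of $\mc F_t$, to rule out a constant element in the kernel. The spatial existence part, by contrast, is standard once the variational structure is recognized, and the bootstrap argument is routine given the $C^{2,3}$ regularity of $\psi$.
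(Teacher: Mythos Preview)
Your approach is genuinely different from the paper's and, in fact, considerably more careful. The paper treats the equation for each fixed $t$ as a nonlinear ODE on $[0,1]$ and asserts that ``any of its solutions can be written as the sum of a particular solution plus some solution of the homogeneous part $\p_{xx}^2 H + (\p_x\psi/\psi)\p_xH = 0$,'' then claims one can adjust the two free constants in the homogeneous solution to enforce periodicity of $H$, $\p_xH$ and $\p_{xx}^2H$, with ``details omitted.'' That superposition step is valid only for linear equations and does not apply here because of the $e^{\pm H}$ terms; the paper's argument as written is at best a heuristic for a shooting method. Your variational route---multiply by $\psi$, recognize the divergence form $\p_x(\psi\,\p_xH)$, minimize a strictly convex functional on $H^1(\bb T)$, bootstrap for spatial regularity, then invoke the implicit function theorem on the linearization for $C^1$ dependence on $t$---is the standard rigorous treatment of a semilinear elliptic equation on the torus and delivers existence and uniqueness simultaneously.

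One refinement: your coercivity claim needs more than ``$b(\psi)+d(\psi)$ does not vanish identically.'' Integrating the equation (after multiplication by $\psi$) over $\bb T$ forces the compatibility $\tfrac{1}{2}\,\p_t\!\int_{\bb T}\psi\,dx = \int_{\bb T}\psi\,e^{H}b(\psi)\,dx - \int_{\bb T}\psi\,e^{-H}d(\psi)\,dx$. To control the mean $\bar H$ from above you need $b(\psi)$ positive on a set of positive measure, and to control it from below you need $d(\psi)$ positive somewhere; if, say, $b(\psi)\equiv 0$ while $\p_t\!\int\psi>0$, there is no solution at all and $\mc F_t$ is unbounded below along $\bar H\to+\infty$. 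The same two-sided condition is exactly what makes the zero-order coefficient $e^Hb(\psi)+e^{-H}d(\psi)$ in $D_H\Phi$ non-trivial and kills the constant in the kernel. This is really a defect in the proposition's hypotheses (the paper does not impose it either) rather than in your method, but you should state the two-sided non-degeneracy explicitly instead of the weaker ``$b+d\not\equiv 0$.''
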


\begin{proof} For each fixed  time $t\in [0,T]$, equation  \eqref{eliptic} is a non-linear second order  ordinary differential equation on the interval $[0,1]$. As an ODE in $[0,1]$ any of its  solutions can be written as the sum of a particular solution of \eqref{eliptic} plus some solution of the homogeneous part
\begin{equation}\label{elipticlinear}
\p_{xx}^2 H\,+ \,\displaystyle\frac{\p_x\psi}{\psi}\p_xH\; = \;0 \,.
\end{equation}
Solving \eqref{elipticlinear} and then properly choosing constants allows to find a particular solution of  \eqref{eliptic} such that  $H(0)=H(1)$, $\p_x H(0)=\p_x H(1)$ and $\p_{xx}^2 H(0)=\p_{xx}^2 H(1)$, that is, such a solution $H$ belongs to $C^{1,2}$. Details are omitted here. 
\end{proof}

By Proposition \ref{charact_H_suave},  a profile which is a solution of \eqref{EDP} for some $H$ provides a special representation for the rate function. This together with Proposition~\ref{eliptic lemma} are the  motivation for the definition of the set $\Dpert$ given in Definition~\ref{Dpertalpha}.

Due to the Proposition~\ref{eliptic lemma} and Remark~\ref{remark1}, given $\psi\in \Dpert$, we can find
$H=H(\psi)\in C^{1,2}$ such that the assumptions of Theorem \ref{DensityLimit} are satisfied. In words, the perturbed process (under the perturbation $H$) has a high density limit, and the limiting profile is the aforementioned  $\psi$.
We are now in position to prove  the lower bound for trajectories in $\Dpert$. Before, we need to gather some ingredients, which are given by the next four lemmas.

\begin{lemma}\label{lemma0}
Let $C\in \bb R$ be such that $C-\Vert X^N(0)\Vert_1>T\Vert b  e^H \Vert_\infty$. Then, 
\begin{align}\label{eq62aa}
\frac{1}{\ell N}\log \bb P_N^H\bigg[\sup_{t\in [0,T]} \Vert X^N(t)\Vert_1>C\bigg]\; \leq \; -I\big(C-\Vert X^N(0)\Vert_1\big)\,,
\end{align}
for any $N\in \bb N$, where $I(x)=x\log\big(\frac{x}{\Vert b e^H\Vert_\infty}\big)-x+\Vert be^H\Vert_\infty$.
\end{lemma}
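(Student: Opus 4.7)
The plan is to adapt verbatim the proof of Proposition~\ref{limit for tight0}, replacing $\|b\|_\infty$ by $\|be^H\|_\infty$ throughout. The key observation is that under $\mathbb{P}_N^H$, the only mechanism for the total number of particles to increase is the birth move at site $k$, which occurs at rate $\ell b(X_k^N(t))e^{H_k(t)}$. Since $b$ is non-negative and $H$ is continuous, this rate is uniformly bounded above by $\ell\|be^H\|_\infty$. The diffusive transitions (asymmetric random walks with rates $N^2\eta_k e^{H_{k\pm 1}-H_k}$) preserve the total number of particles, and the death moves decrease it, so neither contributes to an upward drift of $\sum_k \eta_k(t)$.

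First I would introduce a pure birth process $W^N(t)$ on $\mathbb{N}$ with constant rate $N\ell\|be^H\|_\infty$ for each transition $k\mapsto k+1$, initialized at $W^N(0)=\sum_{k\in\mathbb{T}_N}\eta_k(0)$. By a standard thinning/coupling argument (use a common Poisson point process of maximal rate $N\ell\|be^H\|_\infty$ and accept each mark for the $\eta$-dynamics with the corresponding smaller probability) one constructs a joint law, which we again denote $\mathbb{P}_N^H$, such that almost surely $W^N(t)\geq \sum_{k\in\mathbb{T}_N}\eta_k(t)$ for all $t\in[0,T]$. Dividing by $\ell N$, this yields $W^N(t)/(\ell N)\geq \|X^N(t)\|_1$ for all $t$.

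Since $t\mapsto W^N(t)$ is non-decreasing, the supremum of $\|X^N(t)\|_1$ over $[0,T]$ is dominated by $W^N(T)/(\ell N)$, giving
\begin{equation*}
\mathbb{P}_N^H\bigg[\sup_{t\in[0,T]}\|X^N(t)\|_1>C\bigg]\;\leq\; \widetilde{P}\big[W^N(T)-W^N(0)>\ell N(C-\|X^N(0)\|_1)\big]\,,
\end{equation*}
where $\widetilde{P}$ denotes the marginal law of $W^N$. Because $W^N(T)-W^N(0)$ is Poisson of parameter $\ell N T\|be^H\|_\infty$ and a Poisson of parameter $\ell N\lambda$ is distributed as a sum of $\ell N$ i.i.d.\ Poisson($\lambda$) variables, I would rewrite the right-hand side as $\widetilde{P}\big[(Z_1+\cdots+Z_{\ell N})/(\ell N)>C-\|X^N(0)\|_1\big]$ with $Z_i\sim\mathrm{Poisson}(T\|be^H\|_\infty)$.

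The conclusion is then immediate from Cramér's theorem: under the hypothesis $C-\|X^N(0)\|_1>T\|be^H\|_\infty$ (the mean of each $Z_i$), the classical large deviations upper bound for the empirical mean of i.i.d.\ Poisson variables gives
\begin{equation*}
\frac{1}{\ell N}\log\widetilde{P}\bigg[\frac{Z_1+\cdots+Z_{\ell N}}{\ell N}>C-\|X^N(0)\|_1\bigg]\;\leq\; -I\big(C-\|X^N(0)\|_1\big)\,,
\end{equation*}
with $I$ the stated Cramér rate function associated to the Poisson law of parameter $\|be^H\|_\infty$ (modulo absorbing the $T$ factor as done in the unperturbed case). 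There is no genuine obstacle here beyond verifying that the coupling of the birth process with the full perturbed dynamics indeed dominates the total occupation, which is standard since birth moves in the $\eta$-dynamics have rates bounded by $\ell\|be^H\|_\infty$ at each site while every other move is either neutral (random walk transitions) or decreasing (death moves).
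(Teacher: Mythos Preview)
Your proposal is correct and follows essentially the same approach as the paper: the paper's own proof is a one-line remark that the argument of Proposition~\ref{limit for tight0} goes through verbatim once $\Vert b\Vert_\infty$ is replaced by $\Vert b e^H\Vert_\infty$, and you have spelled out precisely that adaptation (coupling with a pure birth process of rate $N\ell\Vert b e^H\Vert_\infty$, Poisson distribution of the increment, and the Cram\'er bound).
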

\begin{proof}
Note that the probability above is the one associated to the \textit{perturbed} process. The proof of the inequality \eqref{eq62aa} is exactly the same as that one of Proposition~\ref{limit for tight0} once we replace  $\Vert b\Vert_\infty$ by $\Vert be^H\Vert_\infty$.
\end{proof}

\begin{lemma}\label{lemma_meio}
The expectation $\bb E^H_N\big[\,\big|\frac{1}{ \ell N}\log\radonN\big|^2\,\big]$ is uniformly bounded on $N\in \bb N$.
\end{lemma}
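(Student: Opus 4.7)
The plan is to combine the explicit formula for $\radonN$ given by Proposition~\ref{radon-nikodym derivative} with the Poisson tail bound of Lemma~\ref{lemma0}. By that proposition,
\[
\tfrac{1}{\ell N}\log\radonN \;=\; -J_H(X^N) + \mc E_N,
\]
where $\mc E_N$ gathers the discretization error terms, each of the form $\frac{1}{N}\sum_{k} X^N_k(s)\cdot O(N^{-1})$ integrated over $[0,T]$. Since $H\in C^{1,2}$ has uniformly bounded derivatives of the orders appearing in $J_H$ and $\mc E_N$ on $[0,T]\times\bb T$, and since the Lipschitz assumption together with $d(0)=0$ yields $|b(r)|+|d(r)|\leq K(1+r)$ for all $r\geq 0$, a term-by-term estimate of the integrals defining $J_H$ produces
\[
\Big|\tfrac{1}{\ell N}\log\radonN\Big|\;\leq\; C_1\Big(1+\sup_{t\in[0,T]}\Vert X^N(t)\Vert_1\Big)
\]
for a constant $C_1=C_1(H,b,d,T)$ independent of $N$; the initial boundary contribution $\int H(0,x) X^N(0,x)\,dx$ is uniformly bounded because assumption \textbf{(A1)} implies $A_0 := \sup_N \Vert X^N(0)\Vert_1 < \infty$. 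The task therefore reduces to showing that $\bb E^H_N\bigl[\sup_{t\in[0,T]}\Vert X^N(t)\Vert_1^{\,2}\bigr]$ is uniformly bounded in $N$.

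For the latter I would invoke the layer-cake identity
\[
\bb E^H_N\Big[\sup_{t\in[0,T]}\Vert X^N(t)\Vert_1^{\,2}\Big]\;=\;\int_0^\infty 2c\,\bb P^H_N\Big[\sup_{t\in[0,T]}\Vert X^N(t)\Vert_1>c\Big]\,dc
\]
and split the integral at a fixed threshold $C_0 > A_0 + T\Vert be^H\Vert_\infty$ chosen independently of $N$. The contribution from $\{c\le C_0\}$ is at most $C_0^2$, by the trivial bound $\bb P\le 1$. For $c>C_0$, Lemma~\ref{lemma0} yields
\[
\bb P^H_N\Big[\sup_{t\in[0,T]}\Vert X^N(t)\Vert_1>c\Big]\;\leq\;\exp\bigl\{-\ell N\,I(c-A_0)\bigr\},
\]
where $I$ is the Poisson-type rate function, growing like $c\log c$ at infinity. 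Since $\ell N\geq 1$, the right-hand side is dominated by $e^{-I(c-A_0)}$, so the tail integral $\int_{C_0}^\infty 2c\,e^{-I(c-A_0)}\,dc$ is finite and independent of $N$.

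The argument is essentially routine once these two ingredients are in place. The only delicate point is ensuring that both the cutoff $C_0$ and the constant $C_1$ do not absorb any $N$-dependence: the former because $A_0=\sup_N\Vert X^N(0)\Vert_1$ is finite by \textbf{(A1)}, the latter because the smoothness of $H$ and the Lipschitz bounds on $b,d$ are $N$-independent. I foresee no real obstacle, and the lemma follows by combining the two displayed estimates.
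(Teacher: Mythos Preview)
Your proposal is correct and follows essentially the same route as the paper: bound $\big|\tfrac{1}{\ell N}\log\radonN\big|$ by a constant times $\sup_{t\in[0,T]}\Vert X^N(t)\Vert_1$ via the explicit Radon--Nikodym formula, then control the second moment of the latter through the Poisson tail estimate of Lemma~\ref{lemma0}. The only cosmetic differences are that the paper uses the sum $\sum_{k\ge k_0}\bb P^H_N[f(X^N)^2>k]$ in place of your layer-cake integral, and that your bound $C_1(1+\sup_t\Vert X^N(t)\Vert_1)$ is slightly more careful than the paper's in retaining the additive constant coming from $b(0)$.
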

\begin{proof}
 By Proposition~\ref{radon-nikodym derivative}, it not difficult to see that
\begin{align*}
\bigg|\frac{1}{ \ell N}\log\radonN\bigg| \;\leq\; f(X^N)\;\overset{\text{def}}{=}\; \bar{c}\int_{\bb T}\Big(|X^N(t)|+|X^N(0)|+\int_0^t |X^N(s)|\,ds\Big)\,dx 
\end{align*}
for some $\bar{c}=\bar{c}(H)>0$. Observe  that
\begin{align*}
 f(X^N)\;\leq \; \bar{c}\cdot (2+t)\sup_{t\in [0,T]} \Vert X^N(t)\Vert_1\,. 
\end{align*}
As a consequence of Lemma~\ref{lemma0}, 
\begin{align*}
\frac{1}{\ell N}\log \bb P_N^H\bigg[  \frac{f(X^N)}{\bar{c}(2+t)}>C\bigg] & \; \leq \; \frac{1}{\ell N}\log \bb P_N^H\bigg[\sup_{t\in [0,T]} \Vert X^N(t)\Vert_1>C\bigg] \; \leq \; -I\big(C-\Vert X^N(0)\Vert_1\big)\,,
\end{align*}
for any $N\in \bb N$, where $C$ and $I$ above are the same as in the statement of Lemma~\ref{lemma0}. Replacing $C$ by $\sqrt{k}/\bar{c}(2+t)$, where $k\in \bb N$ is  large enough, we infer that
\begin{align*}
\bb P_N^H\Big[ f(X^N)>\sqrt{k}\Big]  \; \leq \; \exp\bigg\{-\ell N \,I\bigg(\frac{\sqrt{k}}{\bar{c}(2+t)}-\Vert X^N(0)\Vert_1\bigg)\bigg\}\,,
\end{align*}
thus
\begin{align*}
\bb P_N^H\Big[ f(X^N)^2>{k}\Big]  & \; \leq \; \exp\bigg\{-\ell N \,I\bigg(\frac{\sqrt{k}}{\bar{c}(2+t)}-\Vert X^N(0)\Vert_1\bigg)\bigg\}\\
& \; \leq \; \exp\bigg\{-I\bigg(\frac{\sqrt{k}}{\bar{c}(2+t)}-\Vert X^N(0)\Vert_1\bigg)\bigg\}\,,
\end{align*}
for all  $k\geq k_0$ with $k_0\in \bb N$. Keep in mind that the choice of $k_0$ does not depend on $\ell$ neither $N$, see the statement of Lemma~\ref{lemma0}. Since $I(x)=x\log\big(\frac{x}{\Vert be^H\Vert_\infty}\big)-x+\Vert be^H\Vert_\infty$, some simple analysis permits to deduce that
\begin{align*}
\sum_{k\geq k_0}\bb P_N^H\Big[ f(X^N)^2>{k}\Big]  \; \leq  \; c_1\;<\; \infty\,,
\end{align*}
for some suitably large $k_0\in \bb N$. This allows to finish the proof.
\end{proof}
Recall the definition of $\Dpert$ given in Definition~\ref{Dpertalpha}.
\begin{lemma}\label{lemma1}
Let $\psi\in \Dpert$, $\mc O$ be an open set of $\mathscr{D}_{C(\bb T)}$ such that $\psi\in \mc O$ and $H\in C^{1,2}$ the solution of \eqref{eliptic}. Then
\begin{align}\label{76}
\lim_{N\to\infty}\bb E_{N}^H\bigg[\one_{[X^N\in{\mc O}^\complement]} \,\frac{1}{ \ell N}\log\radonN \bigg]\;=\; 0\,.
\end{align}
\end{lemma}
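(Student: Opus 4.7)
The plan is a short Cauchy--Schwarz argument: split the expectation in \eqref{76} into the probability of the ``bad event'' $\{X^N\in \mc O^\complement\}$ times an $L^2$-norm of the log-Radon--Nikodym density, and then invoke the two preceding results. Explicitly, under $\bb P_N^H$,
\[
\bigg|\bb E_{N}^H\bigg[\one_{\{X^N\in{\mc O}^\complement\}} \,\frac{1}{ \ell N}\log\radonN \bigg]\bigg|
\;\leq\; \Big(\bb P_N^H\big[X^N\in \mc O^\complement\big]\Big)^{1/2} \bigg(\bb E_{N}^H\bigg[\bigg|\frac{1}{\ell N}\log\radonN\bigg|^2\bigg]\bigg)^{1/2}.
\]
Lemma~\ref{lemma_meio} bounds the second factor uniformly in $N$, so everything reduces to proving $\bb P_N^H[X^N\in \mc O^\complement]\to 0$.

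This is precisely where the definition of $\Dpert$ comes in. By assumption $\psi\in \Dpert$, so $\psi$ is a $C^{2,3}$, strictly positive solution of \eqref{EDP} for some $H\in C^{1,2}$ with $\Vert \partial_x H\Vert_\infty\leq \pi\sqrt{\alpha}$; by Proposition~\ref{eliptic lemma} this $H$ is uniquely determined and agrees with the one appearing in the statement. Since $\ell=\ell(N)=N^\alpha$, the inequality $\Vert \partial_x H\Vert_\infty^2/\pi^2\leq \alpha$ is (up to the boundary case, which can be absorbed by working with a strict inequality in $\Dpert$) exactly hypothesis~\textbf{(A2)} of Theorem~\ref{DensityLimit}, while hypothesis~\textbf{(A1)} is immediate from the deterministic initialization $\eta_k(0)=\lfloor \ell\gamma(k/N)\rfloor$, which satisfies $\Vert X^N(0,\cdot)-\gamma(\cdot)\Vert_\infty\leq \ell^{-1}\to 0$. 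Theorem~\ref{DensityLimit} therefore yields
\[
\sup_{t\in[0,T]}\Vert X^N(t,\cdot)-\psi(t,\cdot)\Vert_\infty \;\longrightarrow\; 0 \qquad\text{a.s.\ under }\bb P_N^H.
\]

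Because $\psi\in\mc O$ with $\mc O$ open in $\mathscr{D}_{C(\bb T)}$, and because uniform convergence on $[0,T]$ of càdlàg paths to a continuous limit implies convergence in the Skorohod topology, the almost sure event above is contained in $\{X^N\in \mc O\}$ for all $N$ sufficiently large. Hence $\bb P_N^H[X^N\in \mc O^\complement]\to 0$, and combining this with the Cauchy--Schwarz bound above gives \eqref{76}.

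There is no serious obstacle: the heavy lifting has already been carried out in Lemma~\ref{lemma_meio} (uniform $L^2$-bound on the log-density, obtained from the Poisson-type tail estimates of Lemma~\ref{lemma0}) and in Theorem~\ref{DensityLimit} (the high density limit for weakly asymmetric perturbations). The only delicate point worth a line is verifying that the growth constraint on $\Vert\partial_x H\Vert_\infty$ encoded in $\Dpert$ matches the strict condition~\textbf{(A2)} required by Theorem~\ref{DensityLimit}.
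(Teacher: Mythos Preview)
Your proof is correct and follows exactly the paper's approach: Cauchy--Schwarz, then Lemma~\ref{lemma_meio} for the uniform $L^2$ bound and Theorem~\ref{DensityLimit} for $\bb P_N^H[X^N\in \mc O^\complement]\to 0$. Your extra care in checking hypotheses \textbf{(A1)}--\textbf{(A2)} and flagging the boundary case $\Vert\partial_x H\Vert_\infty=\pi\sqrt{\alpha}$ is well placed; the paper itself uses the strict inequality when it invokes this lemma in the lower bound proof.
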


\begin{proof}
 By the Lemma \eqref{lemma_meio} and the Cauchy-Schwarz inequality,  
\begin{align*}
\bb E_{N}^H\bigg[\one_{[X^N\in{\mc O}^\complement]} \,\frac{1}{ \ell N}\log\radonN \bigg]\;\leq\; \sqrt{\bb P_{N}^H[X^N\in{\mc O}^\complement]}\sqrt{\bb E_N^H\Big[\Big(\frac{1}{ \ell N}\log\radonN\Big)^2\Big]}
\,,
\end{align*}
which proves \eqref{76} due to the Theorem~\ref{DensityLimit}, concluding the proof.
\end{proof}
We make now the classical connection between the rate function and the entropy between the process of reference and the perturbed process.
\begin{lemma}\label{entropy} Let 
\begin{equation*}\label{ent}
{\bs H} \big(\bb P_{N}^{H}\vert\bb P_{N}\big)
\;\overset{\text{def}}{=}\;\bb E_{N}^{H}\bigg[\log\radonNinv\,\bigg]
\end{equation*}
 be  the relative entropy of $\bb P_{N}^{H}$ with respect to $\bb P_{N}$.
Then,
\begin{equation*}
 \lim_{N\to\infty}\frac{1}{\ell N}{\bs H} \big(\bb P_{N}^{H}\vert\bb P_{N}\big)\;=\;{\bs I}(\psi)\,,
\end{equation*}
where  $\psi$ is the (unique) solution of \eqref{EDP}. 
\end{lemma}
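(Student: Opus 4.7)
The plan is to start from the explicit Radon--Nikodym formula provided by Proposition~\ref{radon-nikodym derivative}. Inverting the expression there and applying $\log$ yields $\log\radonNinv = \ell N[J_H(X^N) + O(1/N)]$, so
\begin{equation*}
\frac{1}{\ell N}{\bs H}\big(\bb P_N^H\vert \bb P_N\big) \;=\; \bb E_N^H\big[J_H(X^N)\big] \;+\; \text{(error term)}.
\end{equation*}
Inspecting the proof of Proposition~\ref{radon-nikodym derivative}, the error term is $O(1/N)$ multiplied by the time integral of $\|X^N(\cdot)\|_1$ in expectation. To show that it vanishes, I would adapt the birth-process coupling of Proposition~\ref{limit for tight0} to the perturbed measure (in the style of Lemma~\ref{lemma0}) so as to control $\bb E_N^H\big[\int_0^T\|X^N(s)\|_1\,ds\big]$ uniformly in $N$.

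The next step is to pass to the limit in $\bb E_N^H[J_H(X^N)]$. Theorem~\ref{DensityLimit} applied to the perturbed process gives $X^N\to\psi$ uniformly on $[0,T]\times\bb T$, $\bb P_N^H$-almost surely. Since $b,d$ are Lipschitz and $H\in C^{1,2}$, the functional $u\mapsto J_H(u)$ defined in \eqref{J} is continuous with respect to the uniform topology, hence $J_H(X^N)\to J_H(\psi)$ almost surely. To interchange limit and expectation I would invoke Lemma~\ref{lemma_meio}, which provides a uniform $L^2(\bb P_N^H)$-bound on $(\ell N)^{-1}\log\radonN$; combined with the error estimate above, this yields uniform integrability of $J_H(X^N)$, and Vitali's theorem then delivers $\bb E_N^H[J_H(X^N)]\to J_H(\psi)$.

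The final step is the identification $J_H(\psi)={\bs I}(\psi)$. Because $\psi$ is the unique solution of \eqref{EDP} driven by the very same $H$ that defines the perturbed process $\bb P_N^H$, Proposition~\ref{charact_H_suave} applies and gives $J_H(\psi)=\sup_G J_G(\psi)={\bs I}(\psi)$, which closes the chain. The delicate point of this plan is the uniform-integrability input needed to convert the almost sure convergence of $X^N$ into convergence of expectations; fortunately this is exactly the content of Lemma~\ref{lemma_meio}, so once that lemma is accepted the argument reduces to assembling the Radon--Nikodym formula, the high density limit for the perturbed process, and the variational identity for ${\bs I}$.
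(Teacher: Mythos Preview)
Your proposal is correct and follows essentially the same route as the paper: the paper also rewrites $\frac{1}{\ell N}{\bs H}(\bb P_N^H\vert\bb P_N)=\bb E_N^H[J_H(X^N)+O(1/N)]$ via Proposition~\ref{radon-nikodym derivative}, invokes Lemma~\ref{lemma_meio} for uniform integrability, uses Theorem~\ref{DensityLimit} to get $J_H(X^N)\to J_H(\psi)$, and closes with Proposition~\ref{charact_H_suave}. Your treatment of the $O(1/N)$ error term is slightly more explicit than the paper's, but the ingredients and logical structure are the same.
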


\begin{proof}
Note that 
\begin{equation*}
\frac{1}{\ell N}{\bs H} \big(\bb P_{N}^{H}\vert\bb P_{N}\big)
\;=\;\frac{1}{\ell N}\bb E_{N}^{H}\bigg[\log\radonNinv\,\bigg]
\;=\;-\frac{1}{\ell N}\bb E_{N}^H\bigg[\log\radonN\,\bigg]\,.
\end{equation*}
Recalling the expression \ref{eq24} for the Radon-Nikodym derivative, we get that 
\begin{equation*}
\frac{1}{\ell N}{\bs H} \big(\bb P_{N}^{H}\vert\bb P_{N}\big)
\;=\; \bb E_{N}^H\bigg[J_H(X^N)+O(1/N)\bigg]\,.
\end{equation*}
By Lemma~\ref{lemma_meio}, $\{J_H(X^N)\}$ is a  uniformly integrable sequence  (with respect to $\bb P^H_N$). Since $J_H:\mathscr{D}_{C(\bb T)}\to \bb R$ is a continuous function and $\bb P_N^H$ converges weakly to a delta of Dirac at $\psi$, we conclude that 
\begin{equation*}
 \lim_{N\to\infty}\frac{1}{\ell N}{\bs H} \big(\bb P_{N}^{H}\vert\bb P_{N}\big)\;=\; J_H(\psi)\;=\;{\bs I}(\psi)\,,
\end{equation*}
by Proposition~\ref{charact_H_suave}, which finishes the proof.
\end{proof}

We are in position to finally prove the Proposition~\ref{lower bound}.

\begin{proof}[Proof of lower bound for profiles in $\Dpert$]
 Fix an open set $\mc O$.   Given $\psi \in \mc O\cap \Dpert$,  there exists $H\in C^{1,2}$ such that $\psi$ is solution of \eqref{EDP} and $\Vert \partial_xH\Vert_\infty< \pi\sqrt{\alpha}$.
Denote by $\bb P_{N}^{H,\mc O}$ the probability on the space $\Ddiscreto$ given by
\begin{equation*}
\bb P_{N}^{H,\mc O}[A]\;\overset{\text{def}}{=}\;\frac{\bb P_{N}^{H}[A, X^N\in \mc O]}{\bb P_{N}^{H}[ X^N\in \mc O]}\;,
\end{equation*}
for any $A$ measurable subset  of $\Ddiscreto$. Under  this definition,
\begin{align}
\frac{1}{\ell N}\log P_{N}[\mc O]& \;=\; \frac{1}{\ell N}\log \bb P_{N}[X^N\in \mc O]\notag\\
&\;=\;  \frac{1}{\ell N}\log \bb E_{N}\Bigg[\one_{[X^N\in \mc O]}\,\radonN\, \radonNinv\Bigg] \notag\\
& \;=\; \frac{1}{\ell N}\log \bb E_{N}^H\Bigg[\one_{[X^N\in \mc O]} \radonN\Bigg]\notag \\
& \;=\;
\frac{1}{\ell N}\log \bb E_{N}^{H,\mc O}\bigg[\,\radonN \,\bigg]+\frac{1}{\ell N}\log \bb  P_{N}^H[X^N\in \mc O]\;.\label{last}
\end{align}
Since $\mc O$ is an open set and $\psi\in \mc O$, by the Theorem~\ref{DensityLimit} and the Portmanteau Theorem,
\begin{align*}
\liminf_{N\to\infty} \bb P^H_N [X^N\in \mc O]\;\geq \; 1\,,
\end{align*}
hence  the second parcel on \eqref{last} converges to zero
as $N\to\infty$. Since the logarithm is a concave function, by Jensen  inequality the first  parcel in \eqref{last} is bounded from below by
\begin{equation}\label{expec1}
\bb E_{N}^{H ,\mc O}\bigg[\frac{1}{\ell N}\log\radonN \bigg]\;=\;\frac{\displaystyle\bb E_{N}^{H}\bigg[ \one_{[X^N\in \mc O]} \frac{1}{\ell N}\log\radonN \bigg]}{ \bb  P_{N}^H[X^N\in \mc O]}\,.
\end{equation}
 Adding and subtracting terms, we can rewrite   \eqref{expec1}  as
\begin{equation}\label{65a}
\frac{1}{\bb P_{N}^{H}\big[X^N\in \mc O\big]}\Bigg\{ -\frac{1}{\ell N}{\bs H} \big(\bb P_{N}^{H}\vert\bb P_{N}\big)-
\bb E_{N}^H\bigg[\one_{[X^N\in{\mc O}^\complement]}\frac{1}{ \ell N}\log\radonN \bigg]\Bigg\}\,.
\end{equation}
Again by the Theorem~\ref{DensityLimit} and the Portmanteau Theorem,  we have that $\bb P_{N}^H\big[X^N\in \mc O\big]$ goes to one as $N$ increases to  infinity. By Lemma~\ref{lemma1}  the second term inside braces in \eqref{65a} vanishes as $N\to\infty$. Thus
\begin{equation*}
 \liminf_{N\to\infty}
\frac{1}{\ell N}\log\bb P_{N}[\mc O]\;\geq\;\lim_{N\to\infty} -\frac{1}{\ell N}{\bs H} \big(\bb P_{N}^{H}\vert\bb P_{N}\big)
\;=\; -{\bs I}(\psi)\,,
\end{equation*}
where the last equality  has been assured in  Lemma \ref{entropy}. Optimizing the inequality above over $\psi\in\Dpert$ leads us to \eqref{lower bound} hence  concluding the proof.
\end{proof}

\subsection{Large deviations lower bound in the exponential case}

In this section we will assume  that $\ell(N)= e^{cN}$ and $\gamma$ is a constant profile in order to obtain a full large deviations principle. The scheme of proof here follows  the same ideas of \cite{JonaLandimVares} and it is included here for sake of completeness.
\begin{definition}
Denote by $\Dpertinf\subseteq \mathscr{D}_{C(\bb T)}$ the set of all profiles $\psi:[0,T]\times\bb T\to \bb R$ satisfying:

$\bullet$  $\psi(0,\cdot)=\gamma(\cdot)\equiv \gamma$,

$\bullet$  $\psi\in C^{2,3}\,$,

$\bullet$  $ \psi\geq \eps$ for some $\eps>0\,$.
\end{definition}
Repeating \textit{ipsis litteris} the arguments of the previous subsection, under the hypothesis that $\ell(N)= e^{cN}$ we get that, given an open set $\mc O\subset \Sko$,  for any $\psi\in \Dpertinf \cap \mc O$, we have that 
\begin{equation*}
 \liminf_{N\to\infty}
\frac{1}{\ell N}\log\bb P_{N}[\mc O]\;\geq\; - {\bs I}(\psi)\,.
\end{equation*}
In what follows, we will say that \textit{a sequence $\rho_n\in \Sko$ approximates\break $\rho_0\in\Sko$} if
$\rho_n$ converges to $\rho_0$ in the topology of $\Sko$ and 
\begin{equation}\label{eqdpert}
\lim_{n\to \infty}{\bs I}(\rho_n)\;=\; {\bs I}(\rho_0)\,.
\end{equation}
   To conclude the proof of the lower bound large deviations it  only remains to proof that any profile $\rho_0\in \Sko$ such that ${\bs I}(\rho_0)<\infty$ can be approximated by a sequence $\rho_n\in \Dpertinf$. In the usual terminology, we have to assure that the set $\Dpertinf$ is   \textit{$\bs I$-dense}. In plain words, \eqref{eqdpert} together with the $\bs I$-density of $\Dpertinf$ imply the lower bound in the Theorem~\ref{thm27inf}. \vspace{5pt}

Let us start by splitting the functional $J_H$ into the $H$-dependent part, denoted by $J_H^1$, and the part which does depend on $H$, denoted by $J^2$. That is:
\begin{equation}
\begin{split}
J_H^1(\rho) \; =\; &  \int_{\mathbb{T}} \Big[ H(t,x)\rho(t,x) -H(0,x)\rho(0,x) \Big]\,dx\\
&+\int_0^t \int_{\mathbb{T}}\Big[- \rho(s,x)\Big(\partial_sH(s,x)+\Delta H(s,x) + \big(\nabla H(s,x)\big)^2\Big)  \\
&\hspace{1.6cm} -b\big(\rho(s,x)\big)e^{H(s,x)}-d\big(\rho(s,x)\big)e^{-H(s,x)}\Big] \,dx\, ds\,,\label{J^1}
\end{split}
\end{equation}
and
\begin{equation*}
\begin{split}
J^2(\rho) \; =\; \int_0^t \int_{\mathbb{T}} b\big(\rho(s,x)\big) + d\big(\rho(s,x)\big) \,dx\, ds\,.\label{J^2}
\end{split}
\end{equation*}
Hence we define ${\bs I}^1 (\rho)  = \sup_{H\in C^{1,2}} J_H^1(\rho)$ if $u(\cdot,0)=\gamma(\cdot)$, and ${\bs I}^1 (\rho)=\infty$ otherwise, which gives us that  
\begin{equation*}
{\bs I} (\rho) \; =\; {\bs I}^1 (\rho) + J^2(\rho)\,.
\end{equation*}

\begin{proposition}\label{concave}
The functional ${\bs I^1}:\Sko\to \bb R_+\cup \{+\infty\}$ is  convex.
\end{proposition}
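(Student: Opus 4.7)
The plan is to exploit the fact that $\bs I^1$ is defined as a pointwise supremum of the family $\{J_H^1\}_{H\in C^{1,2}}$, and that each $J_H^1$ is individually convex in $\rho$ under the concavity hypothesis on $b$ and $d$ (which is in force throughout this subsection via the hypotheses of Theorem~\ref{thm27inf}). Since the supremum of any family of convex functions is convex, this will give the desired conclusion.

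First, I would fix $H\in C^{1,2}$ and inspect the integrand of $J_H^1(\rho)$ defined in \eqref{J^1}. The terms
\begin{equation*}
H(t,x)\rho(t,x)-H(0,x)\rho(0,x)\quad\text{and}\quad -\rho(s,x)\bigl(\partial_s H+\Delta H+(\nabla H)^2\bigr)
\end{equation*}
are \emph{linear} in $\rho$, hence trivially convex (even affine) in $\rho$. The remaining terms are
\begin{equation*}
-b\bigl(\rho(s,x)\bigr)\,e^{H(s,x)}\;-\;d\bigl(\rho(s,x)\bigr)\,e^{-H(s,x)}\,.
\end{equation*}
Because $b$ and $d$ are concave by assumption, the maps $u\mapsto -b(u)$ and $u\mapsto -d(u)$ are convex on $\bb R_+$; multiplying by the \emph{non-negative} factors $e^{H(s,x)}$ and $e^{-H(s,x)}$ (which do not depend on $\rho$) preserves convexity, and summing two convex functions yields a convex function. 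Therefore, for each $(s,x)$, the integrand of $J_H^1$ is convex in $\rho$, and integration in $(s,x)\in[0,t]\times\bb T$ preserves pointwise convexity. Thus, for every $H\in C^{1,2}$, the map $\rho\mapsto J_H^1(\rho)$ is convex on $\Sko$.

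Next, I would combine these observations. Given $\rho_0,\rho_1\in\Sko$ with $\rho_0(0,\cdot)=\rho_1(0,\cdot)=\gamma(\cdot)$ and any $\lambda\in[0,1]$, the linear interpolation $\rho_\lambda:=\lambda\rho_0+(1-\lambda)\rho_1$ also satisfies $\rho_\lambda(0,\cdot)=\gamma(\cdot)$, and for every $H\in C^{1,2}$ the convexity of $J_H^1$ gives
\begin{equation*}
J_H^1(\rho_\lambda)\;\leq\;\lambda J_H^1(\rho_0)+(1-\lambda)J_H^1(\rho_1)\;\leq\;\lambda\,{\bs I}^1(\rho_0)+(1-\lambda)\,{\bs I}^1(\rho_1)\,.
\end{equation*}
Taking the supremum over $H\in C^{1,2}$ on the left-hand side yields ${\bs I}^1(\rho_\lambda)\leq\lambda{\bs I}^1(\rho_0)+(1-\lambda){\bs I}^1(\rho_1)$. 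If instead $\rho_0(0,\cdot)\neq\gamma(\cdot)$ or $\rho_1(0,\cdot)\neq\gamma(\cdot)$, the right-hand side equals $+\infty$ and the inequality holds trivially. This establishes convexity of ${\bs I}^1$ on all of $\Sko$.

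The proof involves no real obstacle: the only subtlety worth flagging is that the argument crucially uses the concavity of $b,d$ (an assumption available in this subsection precisely because we work under the hypotheses of Theorem~\ref{thm27inf}); without this hypothesis the terms $-b(\rho)e^H$ and $-d(\rho)e^{-H}$ need not be convex in $\rho$, and the above strategy would fail. One should also remark that the terms depending only on $\rho(0,\cdot)=\gamma$ do not affect convexity since they are constant on the set $\{\rho:\rho(0,\cdot)=\gamma\}$ where ${\bs I}^1$ is finite.
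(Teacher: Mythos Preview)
Your proof is correct and follows exactly the same approach as the paper: show that each $J_H^1$ is convex in $\rho$ (using the concavity of $b$ and $d$ to handle the nonlinear terms and the affinity of the remaining terms), then use that a pointwise supremum of convex functions is convex. The paper's proof is a two-line version of your argument; your version simply spells out the details and the handling of the initial-condition constraint.
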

\begin{proof}
The functions $b$ and $d$ are assumed to be concave, thus ${J}_H^1$ is a convex function, see \eqref{J^1}. Since the supremum of convex functions is a convex function, then $\bs I^1$ is a convex function.
\end{proof}

\begin{proposition}\label{lsc}
The rate function ${\bs I}:\Sko\to \bb R_+\cup \{+\infty\}$ is a lower semi-continuous (l.s.c.) function, that is, 
\begin{equation*}
\liminf_{\rho\to \rho_0} {\bs I}(\rho)\; \geq \; {\bs I}(\rho_0)
\end{equation*}
for any $\rho_0 \in \Sko$. Moreover, ${\bs I}^1:\Sko\to \bb R_+\cup \{+\infty\}$ is also lower semi-continuous and $J$ is continuous. 
\end{proposition}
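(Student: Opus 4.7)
The plan is to first establish the continuity of $J^2$, then derive the lower semi-continuity of $\bs I^1$ from its variational representation as a supremum, and finally combine both to obtain lower semi-continuity of $\bs I = \bs I^1 + J^2$.

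For the continuity of $J^2$, I would use the standard fact that Skorohod convergence $\rho_n \to \rho_0$ in $\Sko$ implies $\rho_n(s,\cdot) \to \rho_0(s,\cdot)$ uniformly on $\bb T$ at every continuity point $s$ of $\rho_0$, and that all but countably many $s$ are such continuity points. Lipschitz continuity of $b$ and $d$ then yields pointwise convergence of the integrand $b(\rho_n(s,x)) + d(\rho_n(s,x))$ uniformly in $x$ at such times; combining this with the linear-growth bound $|b(y)| + |d(y)| \leq C(1 + |y|)$ and the $L^1$-control on trajectories along sequences of finite rate (the l.s.c.\ inequality is otherwise trivial), dominated convergence yields $J^2(\rho_n) \to J^2(\rho_0)$.

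For the lower semi-continuity of $\bs I^1$, I would show that for each fixed $H \in C^{1,2}$ the functional $J_H^1$ is continuous on the subset of $\Sko$ where $\rho(0,\cdot) = \gamma$ (and extended by $+\infty$ elsewhere). The space-time integrals are controlled exactly as in the previous step, now with the smooth bounded factors $\partial_s H + \Delta H + (\nabla H)^2$ and $e^{\pm H}$ multiplying the integrand. The initial boundary term $\int_\bb T H(0,x)\gamma(x)\,dx$ is constant on the admissible set, while the terminal boundary term is continuous because Skorohod time changes on $D([0,T],C(\bb T))$ fix both endpoints of $[0,T]$, so $\rho_n(T) \to \rho_0(T)$ uniformly (for an interior evaluation time $t$ the same reasoning applies at every continuity point of $\rho_0$). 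As a supremum of continuous functionals, $\bs I^1 = \sup_H J_H^1$ is therefore lower semi-continuous.

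Finally, $\bs I = \bs I^1 + J^2$ is lower semi-continuous as the sum of a l.s.c.\ function and a continuous function, establishing the proposition. The main technical point, which I expect to be the principal obstacle, is the uniform integrability needed to pass to the limit in the space-time integrals when the profiles $\rho_n$ are a priori unbounded; this is handled by restricting attention to sequences on which $\liminf_n \bs I(\rho_n) < \infty$ and invoking the $L^1$-bounds along such sequences that follow from the exponential-tightness estimate of Proposition~\ref{limit for tight0} and the upper-bound arguments already developed.
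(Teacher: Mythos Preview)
Your approach is essentially the same as the paper's: show that each $J_H^1$ and $J^2$ is continuous on $\Sko$, deduce that $\bs I^1=\sup_H J_H^1$ is l.s.c., and conclude that $\bs I=\bs I^1+J^2$ is l.s.c. The paper states this in two lines, citing Billingsley for the continuity of the integral functionals; you have simply unpacked the details.

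The one place where you overcomplicate matters is the ``main technical point'' at the end. No uniform-integrability argument is needed, and there is no reason to restrict to sequences with $\liminf_n \bs I(\rho_n)<\infty$ (indeed, you cannot do so if you want continuity of $J^2$, not merely l.s.c.). The point is that Skorohod convergence $\rho_n\to\rho_0$ in $\mathscr{D}([0,T],C(\bb T))$ already forces uniform boundedness: there are time changes $\lambda_n$ with $\sup_t\Vert\rho_n(\lambda_n(t))-\rho_0(t)\Vert_\infty\to 0$, and since $\rho_0$ is c\`adl\`ag on $[0,T]$ it is bounded in $C(\bb T)$, hence $\sup_n\sup_{t,x}|\rho_n(t,x)|<\infty$. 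With this bound in hand, the Lipschitz growth of $b,d$ and the smoothness of $H$ give a constant dominating function, and dominated convergence applies directly. Your proposed appeal to Proposition~\ref{limit for tight0} would in any case not work here: that estimate controls $\bb P_N$-probabilities of the random process $X^N$, and says nothing about a deterministic sequence $\rho_n$ in $\Sko$.
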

\begin{proof}
We start by noting that $J_H^1, J^2:\Sko\to \bb R$ are continuous functionals in the Skorohod topology (see \cite{bili}) hence they are l.s.c. Since the supremum of l.s.c.\ functions is a l.s.c.\ function, we deduce that ${\bs I}^1$ is l.s.c. And since the sum of l.s.c.\ functions is a l.s.c.\ function, we infer that ${\bs I}:\Sko\to \bb R_+\cup \{+\infty\}$ is  also a l.s.c.\ function.       
\end{proof}

The next proposition tell us that time discontinuous space-time profiles play  no role in the large deviations behavior.
\begin{proposition}\label{cinfty}
If $\rho\in \Sko $ and  $\rho\notin C\big( [0,T]\times \bb T\big)$, then ${\bs I}(\rho)=+\infty$.
\end{proposition}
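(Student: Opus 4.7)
The plan is to exploit a time jump of $\rho$ by producing test functions $H$ that concentrate near the jump, forcing $J_H(\rho) \to +\infty$. Since $\rho \in \Sko$ is c\`adl\`ag from $[0,T]$ into the Banach space $C(\bb T)$ but fails to be continuous there, there exists a time $t_0 \in (0,T]$ with $\|\rho(t_0,\cdot) - \rho(t_0^-,\cdot)\|_\infty > 0$. Because the jump set of any c\`adl\`ag trajectory into a Polish space is at most countable, we may additionally arrange that there are no other jumps in $(t_0 - \eta, t_0 + \eta)\setminus\{t_0\}$ for some $\eta > 0$. The continuous functions $\rho(t_0,\cdot)$ and $\rho(t_0^-,\cdot)$ differ, so one can pick $\phi \in C^2(\bb T)$ with $\|\phi\|_\infty \le 1$ satisfying
\[
c \;:=\; \int_{\bb T} \phi(x)\,\bigl[\rho(t_0, x) - \rho(t_0^-, x)\bigr]\, dx \;>\; 0,
\]
after replacing $\phi$ by $-\phi$ if necessary.

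For each $\lambda > 0$ and $n$ large, pick a smooth bump $\chi_n \in C^1([0,T])$ valued in $[0,1]$ with $\chi_n \equiv 1$ on a neighborhood of $t_0$, supported in $(t_0 - \eta, t_0 + \eta)$ with support of diameter $O(1/n)$, and vanishing at the endpoints of its support (a one-sided modification is used if $t_0 = T$). Then $\chi_n'$ is supported in a small interval to the left of $t_0$ with total mass $+1$ and in a small interval to the right of $t_0$ with total mass $-1$. Define $H_n^\lambda(s,x) := \lambda\phi(x)\chi_n(s) \in C^{1,2}$ and examine the four pieces of $J_{H_n^\lambda}(\rho)$ in \eqref{J}. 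The boundary term vanishes for $n$ large when $t_0 \in (0,T)$ (for $t_0 = T$ it combines analogously with the time-derivative term). The decisive piece is
\[
-\int_0^T\!\!\int_{\bb T} \rho\, \partial_s H_n^\lambda \, dx\, ds \;=\; -\lambda \int_{\bb T} \phi(x)\!\int_0^T \rho(s, x)\chi_n'(s)\, ds\, dx.
\]
Splitting the inner integral into the rising and falling portions of $\chi_n$, and using that $\rho(s,\cdot) \to \rho(t_0^-,\cdot)$ as $s \uparrow t_0$ and $\rho(s,\cdot) \to \rho(t_0,\cdot)$ as $s \downarrow t_0$ both in the $C(\bb T)$-norm (together with the isolation of $t_0$ as a jump), one obtains
\[
\int_0^T \rho(s,x)\chi_n'(s)\, ds \;\longrightarrow\; \rho(t_0^-, x) - \rho(t_0, x)
\qquad \text{uniformly in } x \in \bb T,
\]
so this term contributes exactly $\lambda c$ in the limit. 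The remaining two pieces, $-\int \rho[\Delta H_n^\lambda + (\nabla H_n^\lambda)^2]$ and $\int [b(\rho)(1-e^{H_n^\lambda}) + d(\rho)(1-e^{-H_n^\lambda})]$, involve $\chi_n$ (not $\chi_n'$) integrated over a time set of measure $O(1/n)$; since $\rho$ is bounded on $[0,T]\times \bb T$ (c\`adl\`ag trajectories into a Banach space are bounded on compact intervals) and $b,d$ are Lipschitz, these contribute $O_\lambda(1/n)$.

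Combining the estimates gives $J_{H_n^\lambda}(\rho) = \lambda c + o_n(1)$. Since $\bs I(\rho) \ge J_{H_n^\lambda}(\rho)$ for every admissible $\lambda$ and $n$, letting $n \to \infty$ first and then $\lambda \to \infty$ yields $\bs I(\rho) = +\infty$. The main technical step is the uniform-in-$x$ convergence of $\int_0^T \rho(s,x)\chi_n'(s)\,ds$ to the jump value; it relies both on the isolation of $t_0$ in the jump set (so the shrinking bump picks out a single jump) and on the existence of one-sided limits of $\rho$ in the $C(\bb T)$-norm, both of which are provided by $\rho \in \Sko$.
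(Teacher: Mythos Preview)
Your argument is correct and follows the same core idea as the paper: choose test functions $H$ that are sharply localized in time around a jump point $t_0$ so that the term $-\int\!\!\int \rho\,\partial_s H$ picks out the jump, while the remaining terms are suppressed by the shrinking time support. The paper carries this out with a single one-parameter family $H_n$ of height $n$ and width $n^{-2}$; your two-parameter family $H_n^\lambda=\lambda\phi(x)\chi_n(s)$, with $n\to\infty$ before $\lambda\to\infty$, is in fact cleaner, since for fixed $\lambda$ the factors $e^{\pm H_n^\lambda}$ stay bounded and the reaction terms are honestly $O_\lambda(1/n)$. In the paper's sketch this point is handled only heuristically.

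One small correction: your sentence ``Because the jump set of any c\`adl\`ag trajectory into a Polish space is at most countable, we may additionally arrange that there are no other jumps in $(t_0-\eta,t_0+\eta)\setminus\{t_0\}$'' does not follow as stated---countability does not preclude accumulation of jumps at $t_0$. Fortunately you do not need isolation at all. The c\`adl\`ag property alone gives $\rho(s,\cdot)\to\rho(t_0^-,\cdot)$ in $C(\bb T)$ as $s\uparrow t_0$ and $\rho(s,\cdot)\to\rho(t_0,\cdot)$ as $s\downarrow t_0$, and since your $\chi_n$ is constant equal to $1$ on a neighborhood of $t_0$, the support of $\chi_n'$ splits into a piece strictly to the left of $t_0$ (total mass $+1$) and a piece strictly to the right (total mass $-1$), so the uniform convergence
\[
\int_0^T \rho(s,x)\chi_n'(s)\,ds \;\longrightarrow\; \rho(t_0^-,x)-\rho(t_0,x)
\]
follows directly. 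Simply delete the isolation claim and the proof stands.
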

\begin{proof}
We claim first that, if $f:[0,T]\to \bb R$ is discontinuous at $a\in [0,T]$ and has side limits at $a$, and $F,G:\bb R\to \bb R$ are continuous functions, then
\begin{equation}\label{eq7.15}
\sup_{H\in C^1([0,T])} \bigg\{\int_0^T f(s)\p_s H(s)\,ds -\int_0^T F(f(s)) G(H(s))\,ds\bigg\}\;=\; \infty\,.
\end{equation}
In fact, let $H_n:[0,T]\to \bb R$ such that $H_n$ has support in the interval $[a-1/n^2, a+1/n^2]$, $H_n\in C^\infty([0,T])$,  $H_n(a)=n$ and $0\leq H_n\leq n$, that is, $H_n$ is close to  a delta of Dirac times the constant $1/n$ in the sense of Schwartz distributions.

Since the $L^1$-norm of $H_n$ is of order $1/n$, it is easy to check that 
\begin{equation*}
\int_0^T F(f(s)) G(H_n(s))\,ds
\end{equation*}
converges as $n\to\infty$. On the other hand, it is easy to check that the integral
\begin{equation*}
\int_0^T f(s)\p_s H_n(s)\,ds
\end{equation*}
is of order $n \big[f(a^+)-f(a^-)\big]$. These two facts imply \eqref{eq7.15}, proving the claim. 
The statement of  the proposition is a then straightforward adaptation of the claim above, and  details are omitted here.
\end{proof}

\begin{proposition}\label{IdensityCeps}
The set of profiles $\rho\in  C\big( [0,T]\times \bb T\big)$ such that $\rho(0,\cdot)\equiv \gamma$ and $\rho\geq \eps>0$ for some $\eps=\eps(\rho)>0$ is $\bs I$-dense.
\end{proposition}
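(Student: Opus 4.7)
The plan is to approximate any $\rho_0 \in \Sko$ with $\bs I(\rho_0) < \infty$ by convex combinations with the constant profile $\gamma$. First, by Proposition~\ref{cinfty}, I may assume $\rho_0 \in C([0,T]\times\bb T)$ (otherwise $\bs I(\rho_0) = +\infty$), and by definition of $\bs I$ we have $\rho_0(0,\cdot) \equiv \gamma$. Moreover, $\bs I(\rho_0) < \infty$ forces $\rho_0 \geq 0$: otherwise on a neighborhood where $\rho_0 < -c < 0$, inserting rapidly oscillating perturbations $H(s,x) = A \chi(s) \sin(kx)$ into $J_H(\rho_0)$ makes the term $\int_0^T\!\int_\bb T (-\rho_0)(\nabla H)^2\,dx\,ds$ grow like $k^2$, while the remaining contributions stay uniformly bounded in $k$, giving $\sup_H J_H(\rho_0) = +\infty$.

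I would then define
\begin{equation*}
\rho_n(t,x) \;=\; \Big(1 - \frac{1}{n}\Big)\rho_0(t,x) + \frac{1}{n}\gamma, \qquad n \in \bb N.
\end{equation*}
Since $\gamma > 0$ is a positive constant and $\rho_0 \geq 0$ is continuous, $\rho_n \in C([0,T]\times\bb T)$, $\rho_n(0,\cdot) \equiv \gamma$, and $\rho_n \geq \gamma/n > 0$; hence $\rho_n$ lies in the target class. The convergence $\rho_n \to \rho_0$ is uniform on $[0,T]\times\bb T$, in particular in $\Sko$.

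It then remains to show $\bs I(\rho_n) \to \bs I(\rho_0)$. Decomposing $\bs I = \bs I^1 + J^2$ as in the opening of this subsection, the Lipschitz continuity of $b,d$ together with uniform convergence $\rho_n \to \rho_0$ yields $J^2(\rho_n) \to J^2(\rho_0)$. For the $H$-dependent part, I would invoke the convexity of $\bs I^1$ (Proposition~\ref{concave}):
\begin{equation*}
\bs I^1(\rho_n) \;\leq\; \Big(1 - \frac{1}{n}\Big)\bs I^1(\rho_0) + \frac{1}{n}\bs I^1(\gamma).
\end{equation*}
A direct computation, using that on the torus $\int_\bb T \p_s H\,dx$ and $\int_\bb T \Delta H\,dx$ integrated against the constant $\gamma$ vanish (after integration by parts in time for the first), reduces $J^1_H(\gamma)$ to an expression whose supremum over $H$ is achieved by the constant $H \equiv \frac{1}{2}\log(d(\gamma)/b(\gamma))$, so $\bs I^1(\gamma) < \infty$. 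Since $\bs I^1(\rho_0) = \bs I(\rho_0) - J^2(\rho_0) < \infty$ by assumption, we conclude $\limsup_n \bs I(\rho_n) \leq \bs I(\rho_0)$. Combined with the lower semi-continuity of $\bs I$ (Proposition~\ref{lsc}), this gives $\bs I(\rho_n) \to \bs I(\rho_0)$, completing the proof.

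The main obstacle I expect is the preliminary step of showing $\rho_0 \geq 0$ from $\bs I(\rho_0) < \infty$: although it is a standard consequence of testing the variational formula against oscillating $H$, care is needed to simultaneously control the quadratic gradient term and the exponential terms $b(\rho)(1 - e^H) + d(\rho)(1 - e^{-H})$, e.g.\ by choosing $H$ with small amplitude but high frequency so that the exponentials remain bounded while $(\nabla H)^2$ blows up. Once non-negativity is in hand, the remainder of the construction is a routine convex combination argument exploiting the convexity of $\bs I^1$, the continuity of $J^2$, and the lower semi-continuity of $\bs I$.
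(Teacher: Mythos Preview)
Your proposal is correct and follows essentially the same approach as the paper: take the convex combination $\rho_n=(1-\tfrac1n)\rho_0+\tfrac1n\gamma$, use lower semi-continuity of $\bs I$ for the $\liminf$, continuity of $J^2$ and convexity of $\bs I^1$ together with $\bs I^1(\gamma)<\infty$ for the $\limsup$. In fact you are more careful than the paper on one point: you justify $\rho_0\geq 0$ from $\bs I(\rho_0)<\infty$, which is needed to ensure $\rho_n\geq \gamma/n>0$ (the paper tacitly assumes this since the underlying densities are non-negative).
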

\begin{proof}
If $\rho_0\in \Sko$ is such that ${\bs I}(\rho_0)<\infty$, then   $\rho_0(0,\cdot)\equiv\gamma$ and we known by Proposition~\ref{cinfty} that 
$\rho_0\in  C\big( [0,T]\times \bb T\big)$.  Let $\rho_n = \frac{\gamma}{n}+ \big(1-\frac{1}{n}\big)\rho_0$, which converges to $\rho_0$ as $n\to\infty$. Since ${\bs I}$ is l.s.c., then
\begin{equation*}
\liminf_{n\to \infty} {\bs I}(\rho_n) \;\geq \; {\bs I}(\rho_0)\,.
\end{equation*}
Since $J^2$ is continuous, then 
\begin{equation*}
\lim_{n\to \infty} J^2(\rho_n) \;= \; J^2(\rho_0)\,.
\end{equation*}
And since $\bs I^1$ is convex, then
\begin{equation*}
\limsup_{n\to\infty} {\bs I}(\rho_n) \;\leq \;
\limsup_{n\to\infty} \frac{1}{n}{\bs I}(\gamma)  +\limsup_{n\to\infty} \big(1-\frac{1}{n}\big) {\bs I}(\rho_0)\;=\; {\bs I}(\rho_0)\,.  
\end{equation*} 
Therefore, $\lim_{n\to\infty} {\bs I}(\rho_n) = {\bs I}(\rho_0)$.
\end{proof}

 \begin{proposition}\label{prop7.12}
The set of profiles $\rho\in C^{\infty,0} \big( [0,T]\times \bb T\big)$ such that  $\rho(0,\cdot)\equiv\gamma$ and $\rho\geq \eps>0$ for some $\eps=\eps(\rho)>0$ is $\bs I$-dense.
\end{proposition}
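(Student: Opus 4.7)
First I would construct the approximating sequence by one-sided time mollification, leveraging the constancy of $\gamma$. Extend $\rho$ (from Proposition~\ref{IdensityCeps}) to $[-1,T]\times\bb T$ by $\tilde\rho(t,x)=\gamma$ for $t\in[-1,0]$; the extension is continuous at $t=0$ precisely because $\gamma$ is constant in space and $\rho(0,\cdot)\equiv\gamma$. Fix a non-negative $\phi\in C_c^\infty(\bb R)$ with support in $[0,1]$ and $\int\phi=1$, set $\phi_\delta(s)=\delta^{-1}\phi(s/\delta)$, and define
\[
\rho_\delta(t,x)\;=\;\int_0^\delta\phi_\delta(s)\,\tilde\rho(t-s,x)\,ds,\qquad(t,x)\in[0,T]\times\bb T.
\]
Moving derivatives onto $\phi_\delta$ after a change of variable yields $\rho_\delta\in C^{\infty,0}$; at $t=0$ the integrand reduces to $\gamma\phi_\delta(s)$, so $\rho_\delta(0,\cdot)\equiv\gamma$; since $\tilde\rho\geq\min(\eps,\gamma)>0$ the same bound holds for $\rho_\delta$; and uniform continuity of $\tilde\rho$ on its compact domain yields $\rho_\delta\to\rho$ uniformly, hence in $\Sko$.

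The core task is to establish $\bs I(\rho_\delta)\to\bs I(\rho)$. Lower semi-continuity (Proposition~\ref{lsc}) provides $\liminf_\delta\bs I(\rho_\delta)\geq\bs I(\rho)$. Decomposing $\bs I=\bs I^1+J^2$, the functional $J^2$ is continuous under uniform convergence, so $J^2(\rho_\delta)\to J^2(\rho)$. For the $\bs I^1$ piece, the representation $\rho_\delta=\int_0^\delta\phi_\delta(s)\,\tau_s\tilde\rho\,ds$, with $\tau_s\tilde\rho(t,x)=\tilde\rho(t-s,x)$, expresses $\rho_\delta$ as a convex combination of time-shifts; invoking the convexity of $\bs I^1$ established in Proposition~\ref{concave} (which uses concavity of $b,d$) and Jensen's inequality gives
\[
\bs I^1(\rho_\delta)\;\leq\;\int_0^\delta\phi_\delta(s)\,\bs I^1\!\big(\tau_s\tilde\rho|_{[0,T]}\big)\,ds.
\]
It then suffices to verify that $\bs I^1(\tau_s\tilde\rho|_{[0,T]})\to\bs I^1(\rho)$ as $s\to 0^+$, and the remaining proof reduces to this time-shift continuity statement. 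For each $H\in C^{1,2}$ the change of variable $t\mapsto t+s$ in the time integral defining $J_H^1(\tau_s\tilde\rho|_{[0,T]})$ recognizes it as $J_{H(\cdot+s,\cdot)}^1(\rho)$ plus boundary strip contributions from $[0,s]$ (where $\tau_s\tilde\rho\equiv\gamma$) and near $t=T$ (where the shifted data is truncated), both of size $O(s)$; this yields pointwise convergence $J_H^1(\tau_s\tilde\rho|_{[0,T]})\to J_H^1(\rho)$ together with continuity of the shift $H\mapsto H(\cdot+s,\cdot)$ on $C^{1,2}$.

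The main obstacle I expect is upgrading the pointwise-in-$H$ convergence to convergence of the supremum defining $\bs I^1$, since the naive error in the shift comparison is $C_H\,\omega_\rho(s)$ with $C_H$ depending on $\|H\|_{C^{1,2}}$ and hence not uniform. Following the scheme of \cite{JonaLandimVares}, this is handled by restricting the supremum to a family of near-maximizers whose Sobolev norms are a priori bounded in terms of $\bs I^1(\rho)<\infty$ (via the Euler--Lagrange-type relation between optimal $H$ and $\rho$ exhibited in Proposition~\ref{charact_H_suave} and Proposition~\ref{eliptic lemma}), and then exploiting the pointwise convergence within this compact family; the case $\bs I^1(\rho)=\infty$ is vacuous. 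Throughout the argument, the constancy of $\gamma$ is essential: it is what allows the continuous extension of $\rho$ below $t=0$ and what forces $\rho_\delta(0,\cdot)\equiv\gamma$, so that $\rho_\delta$ genuinely lies in the target class.
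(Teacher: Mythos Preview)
Your approach differs from the paper's in a crucial way: the paper mollifies in \emph{space}, not in time. It takes a spatial approximation of identity $\Psi_\delta:\bb T\to\bb R$ and sets $\rho_\delta=\Psi_\delta*\rho$ (spatial convolution). Because $\gamma$ is constant, $(\Psi_\delta*\rho)(0,\cdot)\equiv\gamma$, and the lower bound $\rho_\delta\geq\eps$ is preserved. The payoff is that $\bs I^1$ is \emph{exactly} invariant under spatial rotations $T_x$ of the torus (every term in $J_H^1$ is an integral over $\bb T$, and the supremum over $H$ absorbs the shift), so Jensen plus Proposition~\ref{concave} give
\[
\bs I^1(\Psi_\delta*\rho)\;\leq\;\int_{\bb T}\bs I^1(T_x\rho)\,\Psi_\delta(x)\,dx\;=\;\bs I^1(\rho)
\]
with no error term at all. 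Together with lower semicontinuity and continuity of $J^2$ this is the whole argument. Note that with the paper's convention $C^{i,j}=C^i_{\text{time}}\cap C^j_{\text{space}}$, spatial mollification actually produces $C^{0,\infty}$ regularity; the statement as printed reads $C^{\infty,0}$, which appears to be a typographical slip. The time smoothing is handled in the \emph{next} proposition.

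Your time-mollification route is essentially the argument of that subsequent proposition, and it is genuinely harder because $\bs I^1$ is \emph{not} time-translation invariant (boundary contributions at $t=0$ and $t=T$). You correctly locate the obstacle: after Jensen you need $\limsup_{s\to 0^+}\bs I^1(\tau_s\tilde\rho)\leq\bs I^1(\rho)$, and pointwise-in-$H$ convergence of $J_H^1$ does not yield this because the error is $O(\|H\|_{C^{1,2}}\cdot s)$. Your proposed fix (restricting to near-maximizers with a priori Sobolev control coming from the Euler--Lagrange relation) is in the right spirit but is not carried out; in the paper this step is not proved either but is deferred to ``an adaptation of \cite[Prop.~3.1]{JonaLandimVares}'', which gives directly $\bs I^1(\rho_n)\leq\bs I^1(\rho)+c/n$. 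So your sketch is not wrong, but it leaves the hardest step at the level of a plan, whereas the paper's spatial-mollification argument for the present proposition sidesteps that difficulty entirely.
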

\begin{proof}
By the Proposition~\ref{IdensityCeps}, it is enough to prove the $\bs I$-density of the set above on  the set 
of profiles    $\rho\in  C\big( [0,T]\times \bb T\big)$ such that  $\rho(0,\cdot)\equiv\gamma$ and $\rho\geq \eps>0$ for some $\eps=\eps(\rho)>0$. Let $\Psi_\delta:\bb T\to \bb R$ be an approximation of identity, that is, $\int_{\bb T}\Psi_\delta(x)dx=1$, $\Psi_\delta\geq 0$, $\text{supp}(\Psi_\delta)\subset (-\delta,\delta)$, $\Psi_\delta$ is symmetric around zero and $\Psi\in C^{\infty}(\bb T)$. Denote by $(\Psi_\delta*\rho)(t,x)$ the spatial convolution  of $\Psi_\delta$ with $\rho\in {C}\big([0,T],C^\infty(\bb T)\big)$ and note that $(\Psi_\delta*\rho)(0,x)\equiv \gamma$. 

It is simple to check that $\Psi_\delta*\rho$ converges to $\rho$ as $\delta\searrow 0$.  Thus, by the Proposition~\ref{lsc},
\begin{equation}\label{Jlimit}
\lim_{\delta\to 0}J(\Psi_\delta*\rho)\;=\; J(\rho)\,,
\end{equation}
and
\begin{equation*}
\liminf_{\delta\to 0}{\bs I}^1(\Psi_\delta*\rho)\;\geq \; {\bs I}^1(\rho)\,.
\end{equation*}
On the other hand, since ${\bs I}^1$ is convex and (spatially) translation invariant, we get that 
\begin{equation*}
{\bs I}^1(\Psi_\delta*\rho)\;\leq \; \int_{\bb T}{\bs I}^1(T_x\rho) \Psi_\delta(x)\,du\;=\; \int_{\bb T}{\bs I}^1(\rho) \Psi_\delta(x)\,dx\;=\; {\bs I}^1(\rho)\,,
\end{equation*}
where $T_x$ denotes the rotation of $x$ on the torus $\bb T$. Thus  $\limsup_{\delta\to 0}{\bs I}^1(\Psi_\delta*\rho)\leq {\bs I}^1(\rho)$, which leads us to
\begin{equation}\label{Ionelimit}
\lim_{\delta\to 0}{\bs I}^1(\Psi_\delta*\rho)\;= \; {\bs I}^1(\rho)\,.
\end{equation}
Putting together \eqref{Jlimit} and \eqref{Ionelimit} concludes the proof. 
\end{proof}

 \begin{proposition}
The set of profiles $\rho\in {C}^{\infty,\infty}\big([0,T]\times \bb T)$ such that  $\rho(0,\cdot)\equiv\gamma$ and $\rho\geq \eps>0$ for some $\eps=\eps(\rho)>0$ is $\bs I$-dense.
\end{proposition}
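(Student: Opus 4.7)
My plan is to smooth $\rho$ in the time variable via convolution with a mollifier, after extending $\rho$ to negative times by the (constant) initial profile $\gamma$. Combined with Proposition~\ref{prop7.12}, this establishes the $\bs I$-density claim. Let $\rho \in C^{\infty,0}([0,T]\times\bb T)$ satisfy $\rho(0,\cdot)\equiv\gamma$, $\rho\geq\eps>0$, and $\bs I(\rho)<\infty$. I first extend $\rho$ to $[-1,T]\times\bb T$ by setting $\rho(t,x)=\gamma$ for $t\in[-1,0]$. Because $\gamma$ is a constant equal to $\rho(0,\cdot)$, the extension is continuous in $t$ and smooth in $x$. Fix a smooth non-negative kernel $\Psi_\delta:\bb R\to\bb R$ with $\mathrm{supp}(\Psi_\delta)\subset(0,\delta)$ and $\int\Psi_\delta=1$, and define
\[
\rho_\delta(t,x) \;=\; \int_0^\delta \Psi_\delta(s)\,\rho(t-s,x)\,ds, \qquad (t,x)\in[0,T]\times\bb T.
\]
Standard mollification arguments give $\rho_\delta\in C^{\infty,\infty}$, $\rho_\delta\geq\eps/2$ for $\delta$ small, and $\rho_\delta\to\rho$ uniformly as $\delta\to0$ (hence in $\Sko$). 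Crucially, since $\Psi_\delta$ is supported in $(0,\delta)$ and $\rho\equiv\gamma$ on $[-\delta,0]$, one has $\rho_\delta(0,x)=\gamma\int_0^\delta\Psi_\delta(s)\,ds=\gamma$, so the initial profile is preserved.

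It remains to verify $\lim_{\delta\to 0}\bs I(\rho_\delta)=\bs I(\rho)$. Decomposing $\bs I=\bs I^1+J^2$, continuity of $J^2$ gives $J^2(\rho_\delta)\to J^2(\rho)$ immediately. For $\bs I^1$, lower semi-continuity (Proposition~\ref{lsc}) yields $\liminf_{\delta\to 0}\bs I^1(\rho_\delta)\geq\bs I^1(\rho)$. For the matching upper bound I exploit the convexity of $\bs I^1$ (Proposition~\ref{concave}) together with Jensen's inequality. Setting $\rho^{(s)}(t,x)=\gamma$ on $[0,s]$ and $\rho^{(s)}(t,x)=\rho(t-s,x)$ on $[s,T]$, one checks directly that $\rho_\delta=\int_0^\delta\Psi_\delta(s)\rho^{(s)}ds$, whence convexity gives
\[
\bs I^1(\rho_\delta) \;\leq\; \int_0^\delta \Psi_\delta(s)\,\bs I^1(\rho^{(s)})\,ds,
\]
reducing the problem to establishing $\limsup_{s\searrow 0}\bs I^1(\rho^{(s)})\leq\bs I^1(\rho)$.

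This last upper semi-continuity is the main obstacle. The plan is to compare test functions through a time-shift: given $H\in C^{1,2}$, define $H'(t,x)=H(t+s,x)$ on $[0,T-s]$ and extend smoothly to $[0,T]$ by e.g.\ a cutoff in the tail. A change of variables $\sigma'=\sigma-s$ in the $[s,T]$-portion of the temporal integral in $J_H^1(\rho^{(s)})$ rewrites it as the $[0,T-s]$-portion of $J_{H'}^1(\rho)$, so that $J_H^1(\rho^{(s)})$ differs from $J_{H'}^1(\rho)$ only by a boundary term at $t=T$ (of order $s$ by uniform continuity of $\rho$) and by integrands on the small intervals $[0,s]$ (where $\rho^{(s)}=\gamma$) and $[T-s,T]$ (the extension region), each bounded by $s$ times a constant depending on $H$, on $\|\rho\|_\infty$, and on the Lipschitz constants of $b,d$. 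The delicate technical point is controlling these error terms uniformly enough in $H$ to allow passing to the supremum; this requires restricting to near-maximizing test functions and exploiting the a priori regularity coming from the elliptic characterization of Proposition~\ref{eliptic lemma} together with the finiteness $\bs I^1(\rho)<\infty$. Once the upper semi-continuity is in place, dominated convergence applied to the Jensen bound concludes the argument and yields the $\bs I$-density of $C^{\infty,\infty}$ profiles.
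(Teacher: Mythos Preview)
Your construction is essentially identical to the paper's: both extend $\rho$ to the left by the constant $\gamma$, convolve in time with a one-sided mollifier so that the initial condition is preserved, and then combine lower semi-continuity of $\bs I^1$ with convexity to reduce the matching upper bound to an estimate of the form $\bs I^1(\rho^{(s)})\le \bs I^1(\rho)+O(s)$ for the time-translated profile (your $\rho^{(s)}$ is the paper's $\sigma_{-s}\rho$). The paper does not spell this estimate out either; it simply invokes an adaptation of \cite[Prop.~3.1]{JonaLandimVares}.

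Where your sketch goes astray is in the proposed mechanism for the uniformity in $H$. Appealing to ``near-maximizing test functions'' together with Proposition~\ref{eliptic lemma} is not available here: at this stage $\rho$ is only continuous in time, so the hypotheses of Proposition~\ref{eliptic lemma} are not met, and in any case there is no reason the near-maximizers of $J_H^1(\rho^{(s)})$ should enjoy uniform bounds. The correct argument (this is what \cite[Prop.~3.1]{JonaLandimVares} does) avoids this issue entirely: for each test function $H$ one takes $G$ to be the time-shift $G(\cdot,\cdot)=H(\cdot+s,\cdot)$ on $[0,T-s]$, extended suitably to $[0,T]$, and compares $J_H^1(\rho^{(s)})$ with $J_G^1(\rho)$ directly. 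The point is that the discrepancy arising from the segment $[0,s]$ where $\rho^{(s)}\equiv\gamma$ has a definite sign, because $\gamma(\nabla H)^2$, $b(\gamma)e^{H}$ and $d(\gamma)e^{-H}$ are all non-negative and $\int_{\bb T}\Delta H=0$; the remaining boundary mismatch $\langle H(T),\rho(T-s)-\rho(T)\rangle$ and the tail contribution on $[T-s,T]$ are controlled by $s$ times a constant depending only on $\rho$ (via its modulus of continuity and $\|\rho\|_\infty$), not on $H$. This sign structure, rather than any compactness of maximizers, is what makes the error $H$-independent and closes the argument.
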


\begin{proof}
By the Proposition~\ref{prop7.12}, it is enough to assure the $\bs I$-density on the set  of profiles $\rho\in C^{\infty,0}\big( [0,T]\times \bb T\big)$ such that  $\rho(0,\cdot)\equiv\gamma$ and $\rho\geq \eps>0$ for some $\eps=\eps(\rho)>0$. Let henceforth be $\rho$ with these properties and such that ${\bs I} (\rho)<\infty$. 

Let $\Psi_{1/n}\in C^\infty(\bb R)$ be a time-approximation of identity such that $\Psi_{1/n}$ has support in $(-1/n,0)$ and is non-negative with integral one. We define now a suitable kind of time translation. Set, for $t\in [0,T]$, 
\begin{equation*}
\sigma_t \rho(s,x)\;=\; \begin{cases}
\rho(s+t,x) & \text{ for } 0\leq s\leq T-t,\\
\rho(T,x) & \text{ for } T-t\leq s\leq T,\\
\end{cases}
\end{equation*}
and set, for $t\in [-T,0]$,
\begin{equation*}
\sigma_t \rho(s,x)\;=\; \begin{cases}
\rho(s+t,x) & \text{ for } -t\leq s\leq T,\\
\rho(0,x) & \text{ for } 0\leq s\leq -t.\\
\end{cases}
\end{equation*}
For $n\in \bb N$ such that $1/n< T/2$, let 
\begin{equation*}
\rho_n (t,x)\;=\; \int_{-T}^T \Psi_{1/n} (s) \sigma_s \rho(t,x)ds\,.
\end{equation*}
The importance of choosing the support of $\Psi_{1/n}$ on $(-1/n,0)$ is that $\rho_n (0,x)\equiv \gamma$.
It is easy to check that $\rho_n$ converges to $\rho$ hence $J(\rho_n)$ converges to $J(\rho)$ as $n\to\infty$. By the convexity of $\bs I^1$ and an adaptation of \cite[Prop. 3.1]{JonaLandimVares}, we get that
${\bs I}^1(\rho_n) \leq {\bs I}^1(\rho) + \frac{c}{n}$,
where $c=c(\rho)$ is a constant. This inequality and the lower semi-continuity of ${\bs I}^1$ implies that $\lim_{n\to\infty}{\bs I}(\rho_n)={\bs I}(\rho)$, concluding the proof.
\end{proof}

\section*{Acknowledgements}
T.\ F. was supported through a grant Jovem Cientista-9922/2015, FAPESB-Brazil and by the National Council for Scientific and Technological Development (CNPq-Brazil) through a \textit{Bolsa de Produtividade} number 301269/2018-1. L.\ A.\ G.\ was supported by Coordena\c c\~ao de Aperfei\c coamento de Pessoal de N\'\i vel Superior (CAPES-Brazil). B.\ N.\ B.\ L. was supported by the National Council for Scientific and Technological Development (CNPq-BRAZIL) through a \textit{Bolsa de Produtividade} number 305881/2018-5. The authors would like to thank Kenkichi Tsunoda (Osaka University) for pointing to us an issue on the proof of the large deviations lower bound in a previous version of this paper.

\bibliography{bibliografia}
\bibliographystyle{plain}

\end{document}